\documentclass[11pt,reqno]{amsart}
\usepackage{a4wide}
\usepackage[
backend=biber,
style=alphabetic,
sorting=nyt,
maxbibnames=50,
giveninits=true,
maxalphanames=10
]{biblatex}
\usepackage{csquotes}
\renewbibmacro{in:}{%
 \ifentrytype{article}{}{}}
\addbibresource{FreeBoundary_bib.bib}
\DeclareFieldFormat[article]{volume}{\textbf{#1}}
\DeclareFieldFormat[article]{title}{\textit{#1}}
\DeclareFieldFormat[article]{journaltitle}{#1}
\DeclareFieldFormat[article]{number}{\textbf{#1}}
\renewbibmacro*{volume+number+eid}{%
  \printfield{volume}%
\setunit*{\textbf{\addcolon}}
  \printfield{number}\setunit{\addcomma\space}
  \printfield{eid}}

\usepackage{verbatim}
\usepackage[shortlabels]{enumitem}
\usepackage[T1]{fontenc}
\usepackage[unicode,hypertexnames=false,colorlinks=true,linkcolor=blue,citecolor=blue]{hyperref}
\usepackage{esint}
\usepackage{hyperref}
\usepackage[capitalise, nameinlink, noabbrev]{cleveref} 

\hypersetup{colorlinks,breaklinks,
	linkcolor=[rgb]{0,0,0},
	citecolor=[rgb]{0,0,0},
	urlcolor=[rgb]{0,0,0}}

\newcommand{\be}{\begin{equation}}
	\newcommand{\ee}{\end{equation}}
\theoremstyle{plain}

\newtheorem{theorem}{Theorem}[section]
\newtheorem{proposition}[theorem]{Proposition}
\newtheorem{corollary}[theorem]{Corollary}
\newtheorem{lemma}[theorem]{Lemma}
\newtheorem{definition}[theorem]{Definition}

\newtheorem{remark}[theorem]{Remark}

\newcommand{\EE}{\mathcal{E}(\lambda,\Lambda)}

\newcommand{\R}{\mathbb{R}}

\newcommand{\N}{\mathbb{N}}
\newcommand{\LL}{\mathcal{L}}
\newcommand{\eps}{\varepsilon}
\newcommand{\F}{\mathcal{F}}


\newcommand{\bea}{\begin{equation*}\begin{aligned}}
		\newcommand{\eea}{\end{aligned}\end{equation*}}

\makeatletter
\newcommand*{\wackyenum}[1]{%
  \expandafter\@wackyenum\csname c@#1\endcsname%
}

\newcommand*{\@wackyenum}[1]{%
  $\ifcase#1\or(H_\F)\or(H_f)\or(H_g)\or(H_p)%
    \else\@ctrerr\fi$%
}
\AddEnumerateCounter{\wackyenum}{\@wackyenum}{53.13}
\makeatother

\crefname{subsection}{subsection}{subsections}

\numberwithin{equation}{section}

\numberwithin{equation}{section}

\setcounter{tocdepth}{1}
\title[Existence and regularity in the fully nonlinear one-phase problem]{Existence and regularity in the fully nonlinear one-phase free boundary problem}

\author[M. Carducci]{Matteo Carducci}\thanks{}
\address {Matteo Carducci \newline \indent
	Classe di Scienze, Scuola Normale Superiore \newline \indent
	Piazza dei Cavalieri 7, 56126 Pisa - ITALY}
\email{\href{mailto:matteo.carducci@sns.it}{matteo.carducci@sns.it}}

\author[B. Velichkov]{Bozhidar Velichkov}\thanks{}
\address {Bozhidar Velichkov \newline \indent
	Dipartimento di Matematica, Universit\`a di Pisa \newline \indent
	Largo B. Pontecorvo 5, 56127 Pisa - ITALY}
\email{\href{mailto:bozhidar.velichkov@unipi.it}{bozhidar.velichkov@unipi.it}}
\begin{document}
	
	\subjclass[2010] {
	}
		\begin{abstract} We consider viscosity solution to one-phase free boundary problems for general fully nonlinear operators and free boundary condition depending on the normal vector. We show existence of viscosity solutions via the Perron's method and we prove $C^{2,\alpha}$ regularity of flat free boundaries via a quadratic improvement of flatness. Finally, we obtain the higher regularity of the free boundary via an hodograph transform.
    	\end{abstract}
	
	\keywords{Regularity, free boundary, fully nonlinear, improvement of flatness, one-phase, Perron's method, viscosity solutions, Alt-Caffarelli}
	\subjclass{35R35}
	\maketitle
        \tableofcontents
	
	\section{Introduction}	
In this paper we study existence and regularity of the free boundary of viscosity solutions to one-phase free boundary problems for fully nonlinear equations with right-hand side and with general free boundary condition. 
Precisely, we consider $u:B_1\subset\R^d\to\R$ non-negative continuous viscosity solutions to the following problem
	\begin{equation}\label{def:def-viscosity-solution}\begin{cases}
			\F(D^2u,\nabla u,x)=f(x) \quad&\text{in } \Omega_u\cap B_1,\\
			|\nabla u|=g(x,\nu) \quad&\text{on } \partial\Omega_u\cap B_1,\\
		\end{cases}
	\end{equation}
	where  
    $\Omega_u:=\{u>0\}$ is the positivity set of $u$ and $\nu\in\mathbb{S}^{d-1}$ is the unit normal to $\partial\Omega_u$ pointing inwards.\smallskip

	We consider fully nonlinear operators $\F:\mathcal{S}^{d\times d}\times \R^d\times B_1\to\R$ which are uniformly elliptic, that is, we assume that there are constants $0<\lambda\le\Lambda$ and $[\F]_{\text{Lip}(\R^d)}>0$ such that 
    \be\label{eq:pucci}\mathcal{M}^-(N)-[\F]_{\text{Lip}(\R^d)}|\xi-\eta|\le \F(M+N,\xi,x)-\F(M,\eta,x)\le \mathcal{M}^+(N)+[\F]_{\text{Lip}(\R^d)}|\xi-\eta|,\ee 
    for every $M,N\in\mathcal{S}^{d\times d}$, $\xi,\eta\in \R^d$, $x\in B_1$, where $\mathcal S^{d\times d}\subset \R^{d\times d}$ is the space of real $d\times d$ symmetric matrices, and where $\mathcal{M}^\pm:\mathcal{S}^{d\times d}\to \R$ are the Pucci's extremal operators defined, for any $N\in\mathcal S^{d\times d}$, as \bea\mathcal{M}^+(N):=\Lambda \sum_{\mu_i>0}\mu_i+\lambda \sum_{\mu_i<0}\mu_i\quad\text{and}\quad \mathcal{M}^-(N):=\lambda \sum_{\mu_i>0}\mu_i+\Lambda \sum_{\mu_i<0}\mu_i,\eea where $\mu_i$ are the eigenvalues of $N$ (see e.g. \cite{caffarellicabre}). We also suppose that $\F(0,0,x)\equiv0$, and for simplicity, we use the notation
    \bea
    \mathcal{E}(\lambda,\Lambda):=\{\F:\mathcal{S}^{d\times d}\times \R^d\times B_1\to\R:\text{ $\F$ satisfies \eqref{eq:pucci} and $\F(0,0,x)\equiv0$}\}.
    \eea
	The minimum hypotheses that we assume are $$\F\in\EE,\quad  f\in C^0( B_1),\quad g\in C^0( B_1,\mathbb{S}^{d-1})$$ 
    and we will ask for more regularity, where necessary. Here below is a list of hypotheses, on the fully nonlinear operator $\F$ and the free boundary condition $g$, that we will use (together or separately) throughout the paper.
	\begin{enumerate}[label=(H\arabic*)]
    \item\label{hyp:h1} The free boundary condition $g$ satisfies the lower bound $$\gamma_0:=\inf_{(x,\nu)\in B_1\times \mathbb{S}^{d-1}}g(x,\nu)>0.$$ 
    
        \item\label{hyp:h2} The operator 
        $$\F(\cdot,\xi,x)\text{ is either concave or convex for every }\xi\in\R^d,\ x\in B_1.$$ 
		
		\item\label{hyp:h3} The operator $\F$ is $C^{0,\beta}$ in the third variable, namely $$|\F(M,\xi,x)-\F(M,\xi,y)|\le [\F]_{C^{0,\beta}(B_1)}(1+|\xi|+|M|)|x-y|^{\beta},$$ for every $M\in\mathcal{S}^{d\times d}$, $\xi\in\R^d,$ $ x,y\in  B_1,$ for some $\beta\in(0,1]$ and $[\F]_{C^{0,\beta}(B_1)}>0$.
	\end{enumerate}   
\smallskip
We use the following notion of viscosity solution to the free boundary problem \eqref{def:def-viscosity-solution}.
    
\begin{definition}[Viscosity solutions to \eqref{def:def-viscosity-solution}]\label{def:def-sol}
	Suppose that $\F\in\EE$, $f\in C^0(B_1)$, $g\in C^0(B_1,\mathbb{S}^{d-1})$ and let $u:B_1\to\R$ be a non-negative continuous function. We say that $u$ is a viscosity subsolution (resp. supersolution) to \eqref{def:def-viscosity-solution}, if $u$ satisfies 
        $$\F(D^2u,\nabla u,x)\ge f\quad\text{in }\Omega_u\cap B_1\qquad\text{(resp. $\le$)}$$  
        in viscosity sense and if $u$ satisfies the free boundary condition in the following sense.
	Let	$x_0\in \partial\Omega_u\cap B_1$ and $\varphi\in C^1(B_1)$, such that $\varphi_+$ touches $u$ from above (resp. $\varphi$ touches $u$ from below) at $x_0$, then $$0\not=|\nabla \varphi(x_0)|\ge g\left(x,\frac{\nabla\varphi(x_0)}{|\nabla\varphi(x_0)|}\right)\qquad\text{(resp. $\le$)}.$$  
		We say that $u$ is a viscosity solution to \eqref{def:def-viscosity-solution} if it is both a viscosity subsolution and a viscosity supersolution to \eqref{def:def-viscosity-solution}.
	\end{definition}

\subsection{State of the art}
The first existence and regularity result for the one-phase problem \eqref{def:def-viscosity-solution}, with Laplace operator and zero right-hand side, was proved by Alt and Caffarelli in \cite{AltCaffarelli:OnePhaseFreeBd} by a variational approach. The construction of viscosity solutions to free problems via the Perron's method was initiated by Caffarelli in \cite{Caffarelli1988:HarnackApproachFreeBd3Existence} in the context of the two-phase problem for linear operators with zero right-hand side (see also \cite{CaffarelliSalsa:GeomApproachToFreeBoundary}), and was extended to fully nonlinear homogeneous and concave operators of the form $\mathcal F=\mathcal F(D^2u)$ by Wang \cite{wangIII}. Again for the two-phase problem, in \cite{DESILVA-perron}, De Silva, Ferrari and Salsa proved existence of viscosity solutions in the case of linear operators with non-zero right-hand side, while Salsa, Tulone and Verzini \cite{salsatuloneverzini-existenceviscositysol} generalized this result to the case of fully nonlinear homogeneous and concave operators of the form $\mathcal F=\mathcal F(D^2u)$. \smallskip

For viscosity solutions, the $C^{1,\alpha}$ regularity of the flat free boundaries of the two-phase problem, for the Laplacian with zero right-hand side, was first obtained by Caffarelli in the papers \cite{Caffarelli1987:HarnackApproachFreeBd1LipschitzC1a,Caffarelli1989:HarnackApproachFreeBd2FlatLipschitz} (see also \cite{CaffarelliSalsa:GeomApproachToFreeBoundary}); further $C^{1,\alpha}$ regularity results for viscosity solutions to the two-phase problem, with linear or fully nonlinear operators and zero right-hand side, were obtained in \cite{wangI,wangII,fel01,ceruttiferrarisalsa-linearoperators,fe1,ferrarisalsa-linearoperators,fs2,af}. In \cite{DeSilva:FreeBdRegularityOnePhase} De Silva introduced a different improvement of flatness technique, and proved a $C^{1,\alpha}$ regularity result for the one-phase problem for linear operators with right-hand side; for the two-phase problem, the same technique was used in \cite{desilvaferrarisalsadistributed,desilvaferrarisalsadivergence} in the context of linear operators with right-hand side, and in \cite{desilvaferrarisalsafullynonlinear} for a class of fully nonlinear operators; we also refer to \cite{rampassoetall} for the case of (degenerate) one-phase fully nonlinear problems.\smallskip 

For what concerns the higher regularity of the free boundaries, Kinderlehrer and Nirenberg \cite{KinderlehrerNirenberg1977:AnalyticFreeBd} proved that the $C^{2,\alpha}$ free boundaries are $C^\infty$ by an hodograph transform approach. In \cite{KriventsovLin}, it was shown that in the case of divergence form operators, the $C^{1,\alpha}$ regularity is sufficient to apply the technique from \cite{KinderlehrerNirenberg1977:AnalyticFreeBd} and to obtain (in the case of smooth data) the $C^\infty$ regularity. Shortly afterwards, in \cite{dfs-higher-order-two-phase}, using a different approach based on an improvement of flatness argument, De Silva, Ferrari and Salsa proved that the $C^{1,\alpha}$ free boundaries are $C^{2,\alpha}$ for the two-phase problem for the Laplacian with right-hand side; this implies the $C^\infty$ regularity of the free boundaries by \cite{KinderlehrerNirenberg1977:AnalyticFreeBd}. 
   
    \subsection{Main results}
   In this paper we prove existence and regularity results for the one-phase problem \eqref{def:def-viscosity-solution}. 
In \Cref{sub:intro-existence}, we discuss the existence of viscosity solution to \eqref{def:def-viscosity-solution} via the Perron's method, while in \Cref{sub:intro-regularity}, we state our regularity results (flatness implies $C^{2,\alpha}$ and higher regularity of the free boundary) in the case when the solution is flat enough.

In what follows, we will say that a constant is universal, if it depends on one or more of the quantities $d$, $\beta$, $\lambda$, $\Lambda$, $\gamma_0$, $[\F]_{\text{Lip}(\R^d)}$, $[\F]_{C^{0,\beta}(B_1)}$, $\|f\|$, $\|g\|$, $\mathcal{A}$, $\underline u$, defined in \ref{hyp:h1}, \ref{hyp:h3}, \cref{def:a-supersolution}, \cref{def:strictminorant}.

\subsubsection{\textbf{Existence of viscosity solutions}}\label{sub:intro-existence} 
We prove the existence of viscosity solution to \eqref{def:def-viscosity-solution} via the Perron's method, taking the pointwise infimum of a suitable family $\mathcal A$ of supersolutions, which stay above a particular subsolution $\underline u$ (called strict minorant) and above a boundary datum. 
We use the following definitions.
    \begin{definition}[Admissible family of supersolutions $\mathcal{A}$]\label{def:a-supersolution}
		We say that a function $w:\overline B_1\to\R$ belongs to the class $\mathcal{A}$ of admissible supersolutions for the problem \eqref{def:def-viscosity-solution} if the following conditions hold:
		\begin{itemize}
			\item[i)] $w$ is continuous and non-negative in $\overline B_1$;\smallskip
			\item[ii)] $w$ satisfies $$\F(D^2w,\nabla w,x)\le f\quad\text{in }\Omega_{w}\cap B_1$$ in viscosity sense;\smallskip
			\item[iii)] for every $x_0\in \partial \Omega_w\cap B_1$, then:
           \begin{itemize}
            \item[a)] either there is $\widetilde\varphi\in C^1(B_1)$ such that $\widetilde\varphi_+$ touches $w$ from above at $x_0$ and 
			$$0\neq |\nabla \widetilde\varphi(x_0)|<g\left(x,\frac{\nabla\widetilde\varphi(x_0)}{|\nabla\widetilde\varphi(x_0)|}\right);$$
            \item[b)] or $$w=o(|x-x_0|)\quad\text{as }x\to x_0.$$
            \end{itemize}
		\end{itemize}
	\end{definition}

    \begin{definition}[Strict minorant]\label{def:strictminorant}
		We say that a function $\underline u:\overline B_1\to\R$ is a strict minorant for the problem \eqref{def:def-viscosity-solution} if the following conditions hold:
		\begin{itemize}
			\item[i)] $\underline u$ is continuous and non-negative in $\overline B_1$;\smallskip
			\item[ii)] $\underline u$ satisfies $$\F(D^2\underline u,\nabla \underline u,x)\ge f\quad\text{in } \Omega_{\underline u}\cap B_1$$ in viscosity sense;\smallskip
			\item[iii)] for every $x_0\in \partial \Omega_{\underline u}\cap B_1$, there is $\widetilde\varphi\in C^1(B_1)$ that touches $\underline u$ from below at $x_0$ and
			$$0\neq |\nabla \widetilde\varphi(x_0)|> g\left(x,\frac{\nabla\widetilde\varphi(x_0)}{|\nabla\widetilde\varphi(x_0)|}\right).$$ 
		\end{itemize}
	\end{definition}
  It is immediate to verify that the functions in the admissible class $\mathcal{A}$ are supersolution to \eqref{def:def-viscosity-solution}, while any strict minorant $\underline u$ is a subsolution to \eqref{def:def-viscosity-solution}, according with \cref{def:def-sol} (see \cref{rem:supersol}).

\smallskip
Our main existence theorem is the following.
	\begin{theorem}[Existence of viscosity solutions]\label{t:existence}
		Suppose that $\F\in\EE$, $f\in C^0( \overline B_1)$ and $g\in {C^0}(\overline B_1,\mathbb{S}^{d-1})$ satisfies the hypothesis \ref{hyp:h1}. Let $\phi:\partial B_1\to\R$ be a non-negative continuous function, $\mathcal{A}$ be the set of admissible supersolutions from \cref{def:a-supersolution} and $\underline u$ be a strict minorant from \cref{def:strictminorant}, with $\underline u\le \phi$ on $\partial B_1$ (e.g. $\underline u\equiv0$).
    Then the family \be\label{def:a-u-phi}\mathcal{A}_{\underline u,\phi}:=\{w\in \mathcal{A}: \ w\ge \underline  u\text{ in } \overline B_1, \ w\ge \phi\text{ on } \partial B_1\}\ee is non-empty and the function $u:\overline B_1\to\R$ defined as
  \begin{equation}\label{e:def-viscosity-solution-u}
  u(x):=\inf\{w(x):w\in \mathcal{A}_{\underline u,\phi}\}
  \end{equation} is a viscosity solution to \eqref{def:def-viscosity-solution}, according with \cref{def:def-sol}, with $u\in C^0(\overline B_1)$ and $u=\phi$ on $\partial B_1$.
  Moreover, in every compact $K\subset\subset B_1$, $u$ is Lipschitz continuous and non-degenerate, in the sense that 
        $$\|\nabla u\|_{L^\infty(K)}\le L\qquad\text{and}\qquad\sup_{B_r(x)}u\ge cr\quad\text{for every} \quad x\in \overline\Omega_u\cap K,\quad r\in(0,r_0),$$ for some universal constants $L>0$, $c>0$ and $r_0>0$, which also depend on $K$.
\end{theorem}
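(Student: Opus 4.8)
The plan is to run Perron's method: first exhibit one admissible supersolution lying above both obstacles, so that $\mathcal{A}_{\underline u,\phi}\neq\emptyset$ and $u$ is well defined, then show that the pointwise infimum $u$ is simultaneously a supersolution and a subsolution of \eqref{def:def-viscosity-solution}, and finally upgrade to the quantitative interior estimates. For non-emptiness I would take the explicit barrier $w_0:=t\,z$ with $z(x):=e^{B}+1-e^{B|x|^2}$. Since $D^2z=-2Be^{B|x|^2}(I+2B\,x\otimes x)$ is negative definite on $\overline B_1$, the right-hand Pucci bound in \eqref{eq:pucci} gives $\F(D^2(tz),\nabla(tz),x)\le t\big(\mathcal{M}^+(D^2z)+[\F]_{\text{Lip}(\R^d)}|\nabla z|\big)$, and one checks that choosing $B$ large (depending only on $d,\lambda,\Lambda,[\F]_{\text{Lip}(\R^d)}$) makes the bracket $\le-\delta<0$ on $\overline B_1$; as $z>0$ on $\overline B_1$ with $\min_{\overline B_1}z=z|_{\partial B_1}=1$, taking $t$ large in terms of $\|f\|/\delta$, $\max_{\partial B_1}\phi$ and $\|\underline u\|$ yields $w_0\in\mathcal{A}$ (strictly positive, so $\partial\Omega_{w_0}\cap B_1=\emptyset$ and condition (iii) of \cref{def:a-supersolution} is void), with $w_0\ge\underline u$ in $\overline B_1$ and $w_0\ge\phi$ on $\partial B_1$, hence $w_0\in\mathcal{A}_{\underline u,\phi}$. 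Then $u$ is well defined with $\underline u\le u\le w_0$, so it is non-negative, bounded, upper semicontinuous (an infimum of continuous functions), and $u\ge\phi$ on $\partial B_1$; for the reverse boundary inequality I would construct, at each $x_0\in\partial B_1$, a local upper barrier by solving the equation in $B_1\cap B_\rho(x_0)$ with data $w_0$ on $\partial B_\rho(x_0)\cap B_1$ and a continuous function above $\phi$, close to $\phi(x_0)$, on $\partial B_1\cap B_\rho(x_0)$, and gluing with $w_0$ by a minimum, which for $\rho$ small stays in $\mathcal{A}_{\underline u,\phi}$ and forces $u=\phi$ on $\partial B_1$.

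Next I would show $u$ is a viscosity supersolution. The interior inequality $\F(D^2u,\nabla u,x)\le f$ in $\Omega_u\cap B_1$ is the classical stability of supersolutions under pointwise infima, using $\Omega_u\subseteq\Omega_w$ so that every $w\in\mathcal{A}_{\underline u,\phi}$ solves the equation on $\Omega_u$. For the free boundary condition one exploits the dichotomy in condition (iii) of \cref{def:a-supersolution}: the class $\mathcal{A}_{\underline u,\phi}$ is stable under finite minima (if $\widetilde\varphi_+$ touches $w_1$ from above at $x_0$ with $|\nabla\widetilde\varphi(x_0)|<g$, it also touches $\min(w_1,w_2)$ from above there; and $w_i=o(|x-x_0|)$ is inherited), so a minimizing sequence at a free boundary point of $u$ may be taken decreasing, with associated test functions $\widetilde\varphi_k$ whose gradient at $x_0$ is bounded by $\|g\|$; passing to the limit and comparing one-sidedly the gradient of any $\varphi\in C^1$ touching $u$ from below with that of the limiting $\widetilde\varphi$ — which forces $|\nabla\varphi(x_0)|\le|\nabla\widetilde\varphi(x_0)|\le g$ in the same direction — rules out $|\nabla\varphi(x_0)|>g\ge\gamma_0>0$. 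Hence $u$ satisfies \cref{def:def-sol} as a supersolution.

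The heart of the argument is that $u$ is a subsolution, and here I would use the minimality of $u$ to derive a contradiction if it fails. If the interior inequality $\F(D^2u,\nabla u,x)\ge f$ fails, there are $x_1\in\Omega_u$ and $P\in C^2$ touching $u$ from above at $x_1$ with $\F(D^2P(x_1),\nabla P(x_1),x_1)<f(x_1)$; after adding $\mu|x-x_1|^2$ to make the contact strict while keeping the strict inequality, $v:=P+\mu|x-x_1|^2-\eps$ is, for $\mu,\eps,\rho$ small, a positive supersolution of the equation on $B_\rho(x_1)\subset\subset\Omega_u$ with $v>u$ on $\partial B_\rho(x_1)$ and $v(x_1)<u(x_1)$, so that $\min(u,v)$ in $B_\rho(x_1)$ extended by $u$ outside lies in $\mathcal{A}_{\underline u,\phi}$ (same positivity set as $u$, hence the same free boundary conditions, and it dominates $\underline u$, since $P$ touching $\underline u$ from above would violate the strict minorant inequality of \cref{def:strictminorant}) and is strictly below $u$ at $x_1$, contradicting \eqref{e:def-viscosity-solution-u}. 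If instead the free boundary inequality fails at some $x_0\in\partial\Omega_u\cap B_1$, there is $\varphi\in C^1$ with $\varphi_+$ touching $u$ from above at $x_0$ and $|\nabla\varphi(x_0)|<g\big(x_0,\tfrac{\nabla\varphi(x_0)}{|\nabla\varphi(x_0)|}\big)$ (the case $\nabla\varphi(x_0)=0$ being included since $g\ge\gamma_0>0$ by \ref{hyp:h1}). Writing $\nu_0:=\nabla\varphi(x_0)/|\nabla\varphi(x_0)|$, fixing $\mu_1$ strictly between $|\nabla\varphi(x_0)|$ and $g(x_0,\nu_0)$, and a point $z_0\in B_1\setminus\overline{\Omega_u}$ just outside $x_0$ on the non-positivity side, I would slide into contact a radial supersolution of the equation that vanishes with gradient $\le\mu_1$ on a sphere $\partial B_R(z_0)$ through (or slightly past) $x_0$ and is positive outside $B_R(z_0)$; since $u\le\varphi_+$ is flat at $x_0$ with gradient $<\mu_1$, a suitable truncation yields an element of $\mathcal{A}_{\underline u,\phi}$ (condition (iii-a) holding with slope $\le\mu_1<g$) whose positivity set is strictly smaller than $\Omega_u$ near $x_0$, hence strictly below $u$ at some point of $\Omega_u$ — again contradicting \eqref{e:def-viscosity-solution-u}. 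This detachment/sliding construction is the step I expect to be the main obstacle: it uses $|\nabla\varphi(x_0)|<g$ precisely to make room for a barrier whose free boundary slope stays below $g$, and the curvature of the sphere must be absorbed by taking $R$ large relative to the neighborhood, which forces the barrier profile and all the scales to be balanced carefully. Once both directions hold, $u$ is a viscosity solution of \eqref{def:def-viscosity-solution}.

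Finally, for the quantitative estimates on a fixed $K\subset\subset B_1$ (with constants allowed to depend also on $\mathrm{dist}(K,\partial B_1)$): for non-degeneracy I would again use minimality — if $\sup_{B_r(x)}u<cr$ for some $x\in\overline{\Omega_u}\cap K$ with $c$ and $r$ small, a radial supersolution on $B_r(x)$ vanishing in $B_{r/2}(x)$ with free boundary slope $<\gamma_0$ on $\partial B_{r/2}(x)$ (available because $\|f\|\,r$ is then negligible against the curvature gain) truncates $u$ to a competitor strictly below $u$ at some point of $B_{r/2}(x)\cap\Omega_u$, contradicting \eqref{e:def-viscosity-solution-u}, so $\sup_{B_r(x)}u\ge cr$ for $r\in(0,r_0)$. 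For the Lipschitz bound I would distinguish two regimes: near the free boundary, the supersolution free boundary condition (which gives $|\nabla u|\le g\le\|g\|$ in the viscosity sense) together with comparison with the barrier $w_0$ yields the linear growth $u(x)\le L\,\mathrm{dist}(x,\partial\Omega_u)$ on $K$; away from the free boundary, interior gradient (or Harnack) estimates for the uniformly elliptic equation $\F(D^2u,\nabla u,x)=f$, rescaled on balls contained in $\Omega_u$, combined with this linear growth, give $\|\nabla u\|_{L^\infty(K)}\le L$. These two bounds also yield the interior continuity of $u$, completing the proof.
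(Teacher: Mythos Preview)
Your overall architecture follows Perron's method, as does the paper, but there is a structural gap that breaks several of your steps: you repeatedly construct competitors of the form ``$\min(u,\text{barrier})$ in a ball, extended by $u$ outside'' and assert that this lies in $\mathcal{A}_{\underline u,\phi}$. It does not. The infimum $u$ is \emph{not} a priori an element of $\mathcal{A}$: condition (iii) of \cref{def:a-supersolution} demands at every free boundary point either a touching $\widetilde\varphi_+$ from above with the \emph{strict} inequality $|\nabla\widetilde\varphi(x_0)|<g$, or $w=o(|x-x_0|)$; this is strictly stronger than the viscosity supersolution condition you establish for $u$, so there is no reason $u\in\mathcal{A}$. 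Consequently, your competitor arguments for the interior subsolution inequality, for the free boundary subsolution inequality, and for non-degeneracy all fail as written: you are comparing $u$ to a function that is not in the admissible class. Your free boundary supersolution step has a related defect: you invoke ``a minimizing decreasing sequence with associated test functions $\widetilde\varphi_k$'' and pass to the limit, but without uniform convergence you cannot locate touching points, and some $w_k$ may fall under alternative (b) of (iii) and have no $\widetilde\varphi_k$ at all.

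The paper fixes this with one additional step that you are missing, and reorders the argument accordingly. First, every $w\in\mathcal{A}_{\underline u,\phi}$ is replaced by $\overline w\in\mathcal{A}_{\underline u,\phi}$ solving an obstacle problem, so that $\overline w$ is an actual \emph{solution} of the equation in $\Omega_{\overline w}$ (and condition (iii) is preserved thanks to the $C^{1,\gamma}_{\mathrm{loc}}$ regularity of the obstacle problem and the dichotomy (a)/(b)). Denoting this subclass $\mathcal{A}^+$, one then proves a \emph{uniform} Lipschitz bound for all $\overline w\in\mathcal{A}^+$ via Harnack and a radial subsolution barrier; the point is that Harnack needs the equation, not just the supersolution inequality. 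By Ascoli--Arzel\`a, $u$ is therefore the \emph{local uniform limit} of some $w_k\in\mathcal{A}^+\cap\mathcal{A}_{\underline u,\phi}$. Every subsequent contradiction argument (supersolution on the free boundary, non-degeneracy, subsolution on the free boundary) is then run with $\min(w_k,\text{barrier})$ for large $k$, which \emph{is} in $\mathcal{A}_{\underline u,\phi}$. One also needs the strong maximum principle (that $\partial\Omega_{\underline u}\cap\partial\Omega_u\cap B_1=\emptyset$, using the \emph{strict} minorant condition) to guarantee $\underline u\equiv0$ near the free boundary of $u$, so that these truncated competitors still dominate $\underline u$; you allude to this only in the interior case. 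Finally, your Lipschitz argument (``supersolution condition gives $|\nabla u|\le g$, hence linear growth from $w_0$'') is not right: the supersolution condition only constrains test functions touching from \emph{below}, and comparison with the fixed global barrier $w_0$ gives a uniform bound, not linear decay to $\partial\Omega_u$; the correct route is the one above, yielding Lipschitz \emph{before} the solution properties.
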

In \Cref{sub-ex} we will discuss the main steps for the proof of \cref{t:existence}. We also refer to \Cref{remarksss} for some remarks about the Perron's solutions constructed in \cref{t:existence}, as well as about the family of admissible supersolutions $\mathcal A$ and the strict minorant $\underline u$.

    \subsubsection{\textbf{Regularity of the free boundary}}\label{sub:intro-regularity}
    Our results about the regularity of the free boundary $\partial \Omega_u\cap B_1$ apply to viscosity solutions $u$ to \eqref{def:def-viscosity-solution}, even ones not obtained via the Perron's method. 
    As a first regularity result, via an improvement of flatness argument, we prove that if $u$ is flat enough, then $u$ and the free boundary $\partial\Omega_u\cap B_1$ are both locally of class $C^{2,\alpha}$. 
    
	\begin{theorem}[Flatness implies $C^{2,\alpha}$] \label{t:flatness-implies} There are universal constants $\eps_0>0$ and $\alpha_0\in(0,1)$ such that the following holds.
        Suppose that $\F\in\EE,$ $ f\in C^{0,\beta}(B_1),$ $ g\in C^{1,\beta}(B_1,\mathbb{S}^{d-1})$ satisfy the hypotheses \ref{hyp:h1}, \ref{hyp:h2}, \ref{hyp:h3} and let $\alpha\in(0,1)$ be such that 
        \bea\label{def:alpha}
        \alpha<\min\{\alpha_0,\beta\}.\eea 
        Let $u:B_1\to \R$ be a non-negative continuous viscosity solution to \eqref{def:def-viscosity-solution} such that $$\left(g(0,\nu)(x\cdot \nu)-\eps_0\right)_+\le u(x)\le \left(g(0,\nu)(x\cdot \nu)+\eps_0\right)_+\quad\text{for every}\quad x\in B_1,$$ for some unit vector $\nu\in\mathbb{S}^{d-1}$. Then $u\in C^{2,\alpha}(\overline\Omega_u\cap B_{1/2})$ and the free boundary $\partial \Omega_u\cap B_1$ is a $(d-1)$-dimensional manifold of class $C^{2,\alpha}$ in $B_{1/2}$.
	\end{theorem}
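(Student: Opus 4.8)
The plan is to run a two-tier improvement-of-flatness scheme. The first tier upgrades the given $\varepsilon_0$-flatness to $C^{1,\alpha}$ regularity of $u$ and of $\partial\Omega_u$, and the second tier bootstraps from $C^{1,\alpha}$ to $C^{2,\alpha}$ via a *quadratic* improvement of flatness. The key structural point is that the normal-dependent free boundary condition $|\nabla u| = g(x,\nu)$ forces the correct comparison profile at each scale to be an affine function of the form $g(x_0,\nu_0)(x\cdot\nu_0 - \text{const}) + (\text{linear correction})$, so one must linearize $g$ around $(x_0,\nu_0)$ and absorb the resulting terms into the error.

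I would proceed as follows. **Step 1 (Normalization and partial Harnack).** After rotating so $\nu = e_d$ and rescaling so that $g(0,e_d)=1$ (dividing $u$ by $g(0,e_d)$, which is $\geq \gamma_0>0$ by \ref{hyp:h1}), establish a partial Harnack inequality for the flatness oscillation: if $u$ lies in a slab of width $\varepsilon$ around $\{x_d = 0\}$ in $B_r$, then in $B_{r/2}$ it lies in a slab of width $(1-c)\varepsilon$, for a universal $c\in(0,1)$. This is the analogue of the De Silva lemma in \cite{DeSilva:FreeBdRegularityOnePhase,desilvaferrarisalsafullynonlinear}; the fully nonlinear interior estimate (using \ref{hyp:h2} for $C^{2,\alpha}$ of the linearized problem, but only $C^{1,\alpha}$ Krylov–Safonov is needed here) and the viscosity free boundary condition drive the argument, with the Hölder dependence of $\F$ and of $f,g$ in $x$ \ref{hyp:h3} producing only lower-order errors. **Step 2 ($C^{1,\alpha}$ via linearized problem).** Iterate Step 1 across dyadic scales; the rescaled solutions $u_r(x) = \frac{u(rx) - (\text{slab center})}{\varepsilon_r r}$ converge (by compactness, using the Lipschitz and non-degeneracy bounds inherited from the hypotheses or provable directly as in \cref{t:existence}) to a solution $\tilde u$ of a linearized problem: $\tilde u$ is caloric/harmonic-type (a solution of a constant-coefficient uniformly elliptic linear equation, the linearization of $\F$) in $\{x_d>0\}$ with Neumann-type condition $\partial_d \tilde u = 0$ on $\{x_d = 0\}$. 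Regularity of $\tilde u$ yields the geometric decay $\varepsilon_{r/2} \leq \tfrac12 r^{\alpha}\varepsilon_r$-type estimate (after fixing a new affine profile), hence $\partial\Omega_u \in C^{1,\alpha}$ and $u\in C^{1,\alpha}(\overline\Omega_u)$ near $0$, with the free boundary normal $\nu(x)$ Hölder continuous.

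**Step 3 (Quadratic improvement of flatness, $C^{1,\alpha}\Rightarrow C^{2,\alpha}$).** This is the heart of the theorem and follows the scheme of \cite{dfs-higher-order-two-phase} adapted to the fully nonlinear, normal-dependent setting. Having straightened the free boundary to first order at a point $x_0$, write $\Omega_u$ locally as a subgraph $\{x_d > \phi(x')\}$ with $\phi\in C^{1,\alpha}$, $\phi(0)=0$, $\nabla\phi(0)=0$, and let $w = u - g(x_0,\nu_0)(x_d - \phi(x'))$ measure the deviation from the model solution. One shows $w$ satisfies a linear uniformly elliptic equation (linearizing $\F$ around the model gradient $g(x_0,\nu_0)e_d$, using \ref{hyp:h2} so that the linearized operator has $C^{0,\alpha}$ coefficients and the interior estimate is $C^{2,\alpha}$) with right-hand side controlled by $\|f\|_{C^{0,\beta}}$ and the curvature of $\phi$, and a *linearized free boundary condition* on $\{x_d=\phi(x')\}$ obtained by differentiating $|\nabla u|^2 = g(x,\nu)^2$: this produces a Neumann condition $\partial_d w = (\nabla_x g \cdot x + \nabla_\nu g \cdot \delta\nu) + o(|x|)$, i.e. the first-order Taylor expansion of $g$ in *both* its arguments, which is exactly where $g\in C^{1,\beta}$ and $f \in C^{0,\beta}$ with $\alpha < \min\{\alpha_0,\beta\}$ enter. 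Proving that $w$ is quadratically close to a second-order polynomial $p$ (a solution of the linearized FBP whose trace on $\{x_d=0\}$ encodes the second fundamental form) and iterating this at every scale $2^{-k}$ gives $\phi\in C^{2,\alpha}$ and, by elliptic regularity in the fixed domain, $u\in C^{2,\alpha}(\overline\Omega_u\cap B_{1/2})$.

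**Main obstacle.** The principal difficulty is Step 3: controlling the error in the linearized free boundary condition when the free boundary has only $C^{1,\alpha}$ regularity a priori and $g$ depends on $\nu$. One must simultaneously track (i) the second-order expansion of the domain (the curvature terms from $\phi$), (ii) the Taylor expansion of $g(x,\nu)$ in the spatial variable (giving a $C^{0,\beta}$-controlled term) and in the normal variable (giving a term *linear in the curvature of $\phi$*, which feeds back into the geometry and must be absorbed carefully to close the iteration), and (iii) the quadratic error from the nonlinearity of $\F$, which is handled by \ref{hyp:h2} (concavity/convexity) guaranteeing $C^{2,\alpha}$ interior estimates for the linearized equation à la Evans–Krylov. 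Making the normal-derivative expansion and the domain-flattening compatible at the quadratic order — so that the improvement-of-flatness dichotomy (either $u$ improves or the free boundary improves) closes with a genuine gain of $2^{-k(1+\alpha)}$ per step — is the delicate part; everything else is a careful but standard adaptation of the De Silva–Ferrari–Salsa machinery to the operator class $\mathcal{E}(\lambda,\Lambda)$.
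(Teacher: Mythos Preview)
Your two-tier plan (first prove $C^{1,\alpha}$, then straighten and bootstrap to $C^{2,\alpha}$) is not the route the paper takes, and Step~3 as you describe it has a genuine gap.

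The paper works in a \emph{single} tier: a quadratic improvement of flatness (\cref{t:improvement of flatness}) is proved directly from the $\varepsilon_0$-flatness hypothesis, with no intermediate $C^{1,\alpha}$ stage and no straightening. The comparison profiles at each scale are $g(0,\nu)(x\cdot\nu)+p(x)$ with $p$ a quadratic polynomial in the class $\mathcal P(\nu,\F,f,g)$ of \eqref{def:polynomial}, and the argument is by contradiction/compactness. A partial Harnack gives compactness of the linearized sequence, and the limit solves a \emph{fully nonlinear} problem $\widetilde\F(D^2\widetilde u)=0$ in the half-space with an \emph{oblique} boundary condition $\nabla\widetilde u\cdot\tau=0$, where $\tau=\tau(e_d,g)$ encodes $\nabla_\theta g(0,e_d)$ (see \eqref{def:tau}). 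Your claim in Step~2 that the linearized condition is Neumann, $\partial_d\widetilde u=0$, is wrong precisely because $g$ depends on $\nu$; the oblique term is first-order in the flatness, not a higher-order correction. The $C^{2,\alpha_0}$ estimate for this oblique problem (\cref{prop:c2alphaestimate}, which is where \ref{hyp:h2} is used) then delivers the quadratic gain, and iteration gives the uniform second-order Taylor expansion at every free boundary point (\cref{corollary:rate-of-convergence}).

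The gap in your Step~3 is circularity. You propose to write $\Omega_u$ as a subgraph of a $C^{1,\alpha}$ function $\phi$, set $w=u-g(x_0,\nu_0)(x_d-\phi(x'))$, and extract $C^{2,\alpha}$ regularity for $w$ from Schauder-type estimates up to $\{x_d=\phi(x')\}$. But boundary $C^{2,\alpha}$ estimates require the boundary itself to be $C^{2,\alpha}$, which is exactly what you are trying to prove; on a merely $C^{1,\alpha}$ boundary you cannot close this. That is why both the paper and \cite{dfs-higher-order-two-phase} (which you cite) run the quadratic comparison directly on the free boundary problem in viscosity form rather than straightening; the hodograph/fixed-domain argument is used only in \cref{corollary:cinfty}, after $C^{2,\alpha}$ is already in hand.
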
    
    We refer to \Cref{sub-reg} for a discussion about the main steps for the proof of \cref{t:flatness-implies}, which is based on iteration of the quadratic improvement of flatness in \cref{t:improvement of flatness}.
    
    We point out that, even if $\F$, $f$ and $g$ are less regular, the argument from \cref{t:flatness-implies} can be used to show that the flat free boundaries are $C^{1,\alpha}$. Precisely, if $\mathcal F\in\EE$, $f\in C^0(B_1)$ and $g\in C^{0,\beta}(B_1\times \mathbb{S}^{d-1})$ satisfies the hypothesis \ref{hyp:h1},
then the free boundary $\partial \Omega_u\cap B_1$ is $C^{1,\alpha}$-regular in $B_{1/2}$ and $u\in C^{1,\alpha}(\overline\Omega_u\cap B_{1/2})$. This result was proved in \cite{desilvaferrarisalsafullynonlinear} for the two-phase problem.

 \smallskip
    Once the solution $u$ and its free boundary $\partial\Omega_u $ are $C^{2,\alpha}$-regular, we can use the hodograph transform from  \cite[Theorem 2]{KinderlehrerNirenberg1977:AnalyticFreeBd} to reduce the free boundary problem \eqref{def:def-viscosity-solution} to an elliptic nonlinear equation with oblique boundary condition on a fixed boundary. This gives the following higher regularity result.

    \begin{corollary}[Higher regularity]\label{corollary:cinfty}
        Suppose that $\F\in C^{k,\beta}(\mathcal{S}^{d\times d}\times \R^d\times B_1)\cap \EE,$ $f\in C^{k,\beta}(B_1),$ $ g\in C^{k+1,\beta}(B_1,\mathbb{S}^{d-1})$ satisfy the hypotheses \ref{hyp:h1}, \ref{hyp:h2}, for some $k\in\N_{\ge1}$ and $\beta\in(0,1)$. 
        Let $u:B_1\to \R$ be a non-negative continuous viscosity solution to \eqref{def:def-viscosity-solution} such that $$\left(g(0,\nu)(x\cdot \nu)-\eps_0\right)_+\le u(x)\le \left(g(0,\nu)(x\cdot \nu)+\eps_0\right)_+\quad\text{for every}\quad x\in B_1,$$ for some unit vector $\nu\in\mathbb{S}^{d-1}$ and $\eps_0>0$ as in \cref{t:flatness-implies}. Then the free boundary $\partial \Omega_u\cap B_1$ is a $(d-1)$-dimensional manifold of class $C^{k+2,\beta}$ in $B_{1/2}$.
		Moreover, if $\F,$ $f$, $g$ are $C^{\infty}$ (resp. analytic), then the free boundary $\partial \Omega_u\cap B_1$ is $C^{\infty}$ (resp. analytic) in $B_{1/2}$.
	\end{corollary}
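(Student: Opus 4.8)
The plan is to first invoke Theorem \ref{t:flatness-implies}, whose hypotheses are satisfied under the (stronger) assumptions of the corollary, to conclude that $u \in C^{2,\alpha}(\overline\Omega_u \cap B_{1/2})$ and that $\partial\Omega_u \cap B_{1/2}$ is a $C^{2,\alpha}$ hypersurface. In particular, after a rotation we may assume $\nu = e_d$ and, by the flatness together with $C^{2,\alpha}$ regularity, that in a neighborhood $B_\rho$ of the origin the free boundary is a graph $\{x_d = \phi(x')\}$ with $\phi \in C^{2,\alpha}$, and $\Omega_u \cap B_\rho = \{x_d > \phi(x')\}$, with $|\nabla u| \geq \gamma_0/2 > 0$ up to the free boundary (from non-degeneracy and the free boundary condition \ref{hyp:h1}). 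This is the starting point required to run the hodograph–Legendre transform.

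Next I would apply the hodograph transform of Kinderlehrer–Nirenberg \cite[Theorem 2]{KinderlehrerNirenberg1977:AnalyticFreeBd}: since $u$ is $C^{2,\alpha}$ up to the $C^{2,\alpha}$ free boundary and $\partial_{x_d} u \neq 0$ there, one can use $u$ itself as a new coordinate. Concretely, set $y' = x'$ and $y_d = u(x)$; this is a $C^{2,\alpha}$ diffeomorphism from a one-sided neighborhood of $0$ in $\overline\Omega_u$ onto a half-ball $\{y_d \geq 0\}$, straightening the free boundary to $\{y_d = 0\}$. Writing $x_d = v(y)$ for the inverse map, the equation $\F(D^2 u, \nabla u, x) = f(x)$ transforms into a fully nonlinear uniformly elliptic equation $\widetilde{\F}(D^2 v, \nabla v, y) = 0$ for $v$ in the half-ball, where $\widetilde\F$ is built from $\F$, $f$ by the usual algebraic substitution (chain rule expressing $D^2 u, \nabla u$ in terms of $D^2 v, \nabla v$ and dividing by appropriate powers of $\partial_{y_d} v \neq 0$); crucially $\widetilde\F$ inherits the concavity/convexity \ref{hyp:h2} up to sign and is $C^{k,\beta}$ in all its arguments because $\F \in C^{k,\beta}$, $f \in C^{k,\beta}$. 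The free boundary condition $|\nabla u| = g(x,\nu)$ becomes an oblique (indeed, after the transform, a genuinely nonlinear but uniformly oblique) boundary condition $G(\nabla v, y) = 0$ on $\{y_d = 0\}$, with $G \in C^{k+1,\beta}$ since $g \in C^{k+1,\beta}$; the obliqueness is quantitative, coming again from $|\nabla u| \geq \gamma_0/2$.

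Now the problem is reduced to interior-and-boundary higher regularity for a fully nonlinear uniformly elliptic equation with a nonlinear oblique boundary condition, for a solution $v$ which is already $C^{2,\alpha}$ up to $\{y_d = 0\}$. Here I would quote the Schauder-type boundary regularity theory for such problems: for concave (or convex) equations this is the Krylov–Safonov / Krylov boundary $C^{2,\alpha}$ theory together with a bootstrap — differentiating the equation and the boundary condition in the tangential directions $y_1,\dots,y_{d-1}$, each tangential derivative $\partial_{y_i} v$ solves a linearized equation (uniformly elliptic, coefficients of regularity one order lower) with a linear oblique condition, so $v \in C^{2,\alpha} \Rightarrow \partial_{y_i} v \in C^{2,\alpha} \Rightarrow \dots$, and the normal derivatives are recovered from the equation itself; iterating $k$ times gives $v \in C^{k+2,\beta}$ up to $\{y_d = 0\}$. (For the $C^\infty$ case one iterates indefinitely; for the analytic case one invokes the Morrey–Nirenberg analyticity theorem for elliptic systems with analytic data and analytic oblique boundary conditions, as in \cite{KinderlehrerNirenberg1977:AnalyticFreeBd}.) Transporting back via the $C^{k+2,\beta}$ diffeomorphism $v$, the free boundary $\{x_d = \phi(x')\}$ with $\phi(y') = v(y',0)$ is $C^{k+2,\beta}$, which is the claim.

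The main obstacle, in my view, is not any single deep estimate — the hodograph transform and the elliptic bootstrap are classical — but rather the bookkeeping needed to check that after the transform the new operator $\widetilde\F$ is still in a class to which the boundary Schauder/Krylov theory applies: one must verify uniform ellipticity of $\widetilde\F$ with constants depending only on $\lambda,\Lambda,\gamma_0$ and the flatness, preservation (up to sign and a controlled change of constants) of the concavity/convexity \ref{hyp:h2} after dividing by the nonvanishing factor $\partial_{y_d} v$, and quantitative obliqueness of $G$; once these structural facts are in place the regularity gain is automatic. A secondary point is that \ref{hyp:h1} is used twice — once to guarantee $|\nabla u|$ is bounded below so the transform is a diffeomorphism, and once to get obliqueness — so it is essential that it is among the standing hypotheses of the corollary.
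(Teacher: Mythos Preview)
Your approach is essentially the same as the paper's: both start from \cref{t:flatness-implies} to obtain $u\in C^{2,\alpha}$ up to the free boundary, and then apply the Kinderlehrer--Nirenberg hodograph transform to straighten the free boundary and convert \eqref{def:def-viscosity-solution} into a fully nonlinear elliptic equation for $v$ (the paper calls it $w$) on a fixed half-space with a nonlinear oblique boundary condition. The only substantive difference is the endgame: the paper explicitly verifies the Agmon--Douglis--Nirenberg complementing condition for the transformed system and then invokes \cite[Theorem 11.1]{adn} (and \cite{morrey-multiple-integrals} for analyticity) as a black box, whereas you propose a tangential-differentiation bootstrap via linear Schauder theory for oblique problems. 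For a scalar second-order elliptic equation with a genuinely oblique first-order boundary condition these two routes are equivalent (obliqueness implies the complementing condition), so either works.

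Two remarks. First, your concern about whether $\widetilde\F$ ``inherits the concavity/convexity \ref{hyp:h2}'' is a red herring and need not be checked: you already have $v\in C^{2,\alpha}$ (inherited from $u$ through the $C^{2,\alpha}$ diffeomorphism), so the bootstrap proceeds entirely through the \emph{linearized} equation, which is linear uniformly elliptic with H\"older coefficients and a linear oblique condition---no concavity is needed there. The paper's proof reflects this by never using \ref{hyp:h2} after the hodograph step; that hypothesis is consumed in \cref{t:flatness-implies}. Second, the transformed operator actually depends on $v$ itself (because the spatial variable becomes $(y',v(y))$), not only on $D^2v$ and $\nabla v$ as you wrote; this is harmless for the bootstrap but should be tracked in the bookkeeping you mention.
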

    
     We notice that \cite[Theorem 2]{KinderlehrerNirenberg1977:AnalyticFreeBd} requires the $C^2$ regularity of $u$ as this is the minimum hypothesis one needs in order to deduce higher regularity for solutions of nonlinear elliptic equations (see \cite[Theorem 11.1]{adn} and \cite[Section 6.7]{morrey-multiple-integrals}). In the case of divergence form operators, in \cite{KriventsovLin} it was shown that the $C^{1,\alpha}$ regularity is sufficient to run the argument from \cite{KinderlehrerNirenberg1977:AnalyticFreeBd} and to obtain (in the case of smooth data) the $C^\infty$ regularity. To our knowledge, for fully nonlinear operators, there are no available results that allow to bootstrap directly from $C^{1,\alpha}$ to $C^\infty$. This is the reason for which we first prove the $C^{2,\alpha}$ regularity via an improvement of flatness argument.

\subsection{Sketch of the proof and some remarks.}
Here below we discuss the main steps for the proof of our existence and regularity results \cref{t:existence} and \cref{t:flatness-implies}, with some remarks about the Perron's solution constructed in \cref{t:existence}.

\subsubsection{Existence of viscosity solutions}\label{sub-ex} We prove \cref{t:existence} via the Perron's method; the strategy is similar to the one from \cite{Caffarelli1988:HarnackApproachFreeBd3Existence,wangIII,DESILVA-perron,salsatuloneverzini-existenceviscositysol} developed for two-phase problems.
We divide the proof in five steps.\smallskip 

\textit{Step 1.} In the first step we prove that $u$ is Lipschitz continuous and satisfies \eqref{def:def-viscosity-solution} in $\Omega_u$ in \cref{prop:lip-and-sol}. More precisely, first in \cref{lemma:ex1} we show that, given an admissible supersolution $w\in\mathcal{A}_{\underline u,\phi}$, we can replace $w$ with another supersolution $\overline w\in\mathcal{A}_{\underline u,\phi}$ which satisfies the interior condition
       $$\F(D^2\overline w,\nabla\overline w,x)=f\quad\text{in }\Omega_{\overline w}\cap B_1,$$ solving an obstacle-type problem for a fully nonlinear operator. We notice that the dichotomy in $iii)$ of \cref{def:a-supersolution} is crucial for the existence of this replacement (see \Cref{remarksss}).
Second, in \cref{lemma:ex2}, we use Harnack inequality and comparison arguments with the subsolution from \cref{lemma:barrier1} as a barrier, to show that the replaced supersolutions $\overline w$ are locally equi-Lipschitz, and so $u$ is locally Lipschitz. As a corollary, we obtain that $u$ is locally obtained as uniform limit of admissible supersolutions (see \cref{corollary:ex0}).\smallskip

\textit{Step 2.} In the second step we prove that $u$ is a supersolution to \eqref{def:def-viscosity-solution} in \cref{prop:supersol}. 
This essentially follows from the fact that $u$ is locally obtained as limit of a sequence of supersolutions $\{w_k\}\subset\mathcal{A}_{\underline u,\phi}$.\smallskip

\textit{Step 3.} In the third step we show that $u$ is non-degenerate in \cref{prop:non-and-sol}. As a fundamental ingredient, we use a strong maximum principle \cref{lemma:maximum}, which follows by $iii)$ of \cref{def:strictminorant}, to ensure that the free boundary of $u$ and the free boundary of $\underline u$ cannot intersect. Arguing by contradiction, we show that if $u$ is degenerate in $0\in\Omega_u$, then we can construct an admissible supersolution as the minimum between some $w\in \mathcal A_{\underline u,\phi}$ close to $u$ and the supersolution from \cref{lemma:barrier2} which vanishes in $0$. \smallskip

\textit{Step 4.} In the fourth step we prove that $u$ is a subsolution to \eqref{def:def-viscosity-solution} in \cref{prop:subsol}. The idea is that if a test function $\varphi_+$ with small gradient touches $u$ from above at $0\in\partial\Omega_u$, then at a small scale, $u$ is touched from above by a function $h$, which is a translation of an half-plane solution.
This can be used to construct an admissible supersolution, obtained as the minimum between some $w\in \mathcal A_{\underline u,\phi}$ close to $u$ and a small perturbation of $h$ which vanishes in a neighborhood of $0$. This is a contradiction with $0\in\partial\Omega_u$.\smallskip

\textit{Step 5.} The fifth step is dedicated to the proof of the continuity of $u$ up to the boundary in \cref{prop:attacca} and the fact that $\mathcal{A}_{\underline u,\phi}$ is non-empty (and so $u$ is well-defined) in \cref{prop:construction}. The continuity of $u$ up to the boundary follows from classical exterior ball arguments, while $\mathcal{A}_{\underline u,\phi}\not=\emptyset$ is obtained by the construction of a particular admissible supersolution $w\in\mathcal{A}_{\underline u,\phi}$.

\subsubsection{Regularity of the free boundary}\label{sub-reg} We prove \cref{t:flatness-implies} by an improvement of flatness argument by approximating the solutions with second degree polynomials. The argument is similar to the one from \cite[Theorem 1.1]{dfs-higher-order-two-phase} (see also \cite{DESILVAc2alpha,DesilvaSavin:globalsolution}), which is also obtained via a quadratic improvement of flatness for the Laplace operator, even though the second order approximations are different.

        For any unit vector $\nu\in\mathbb{S}^{d-1}$, we consider quadratic polynomials which are solutions to the following fully nonlinear elliptic equation with oblique boundary condition
        \be\label{def:sol-pol}\begin{cases}
				\F(D^2p,g(0,\nu)\nu,0)=f(0)\quad&\text{in } B_1,\\
				\nabla p(x)\cdot \tau(\nu,g)=\nabla_x g(0,\nu)\cdot x\quad&\text{in } B_1\cap \{x\cdot \nu=0\},
			\end{cases}\ee 
            where 
            \be\label{def:tau}\tau(\nu,g):=\nu+\frac{(\nabla_\theta g(0,\nu)\cdot \nu)\nu}{g(0,\nu)}
			-\frac{\nabla_\theta g(0,\nu)}{g(0,\nu)}.\ee
        For simplicity, we use the notation \be\label{def:polynomial}\begin{aligned}\mathcal{P}(\nu,\F,f,g):=\{p(x)=Mx\cdot x :\ M\in\R^{d\times d},\ \text{$p$ is a solution to \eqref{def:sol-pol}}\}.\end{aligned}\ee  
        In the quadratic improvement of flatness \cref{t:improvement of flatness}, we show that, under some structural hypotheses on $\F$, $f$ and $g$, if there is a quadratic polynomial 
        $p\in\mathcal{P}(e_d,\F,f,g)$ such that $\|p\|_{L^\infty(B_1)}\le r$ and
					$$\left(g(0,e_d)x_d+p(x)-r^{1+\alpha}\right)_+\le u(x)\le \left(g(0,e_d)x_d+p(x)+r^{1+\alpha}\right)_+\quad\text{for every}\quad x\in B_1,$$  
        then there are a unit vector $\nu'\in\mathbb{S}^{d-1}$ and a quadratic polynomial $p'\in\mathcal{P}(\nu',\F, f, g)$ such that
				$$\left(g(0,\nu')(x\cdot\nu')+\rho p'(x)-(r\rho)^{1+\alpha}\right)_+\le u_{\rho}(x)\le \left(g(0,\nu')(x\cdot\nu')+\rho p'(x)+(r\rho)^{1+\alpha}\right)_+$$ 
                for every $x\in B_1$, where $u_\rho(x):=\frac{u(\rho x)}{\rho}$, for some $\rho\in(0,1)$.
     
      Following the strategy from \cite{DeSilva:FreeBdRegularityOnePhase}, we argue by contradiction and we prove a partial Harnack inequality in \cref{prop:partial-harnack}, which assures that the linearized sequence converges locally uniformly. In \cref{lemma:linearized-problem}, we show that the limit function $\widetilde u$ satisfies the following fully nonlinear problem with oblique boundary condition
        $$\begin{cases}
				\widetilde \F(D^2\widetilde u)=0 \quad&\text{in }B_{1/2}^+,\\
				\nabla\widetilde u\cdot \tau=0\quad&\text{on }B_{1/2}'.
			\end{cases}$$ 
        By the convexity/concavity assumption \ref{hyp:h2} on $\mathcal F$, the limit function $\widetilde u$ is of class $C^{2,\alpha_0}$, for some $\alpha_0\in(0,1)$, by \cref{prop:c2alphaestimate}; in fact, the constant $\alpha_0$ from \cref{t:flatness-implies} is precisely the constant from \cref{prop:c2alphaestimate}. Finally, in \Cref{section-contradiction}, we adjust the vector $\nu$ and the polynomial $p$ using $\nabla \widetilde u(0)$ and $D^2\widetilde u(0)$ and modifying them appropriately so that the new vector $\nu'$ and the new quadratic polynomial $p'$ belong to $\mathbb{S}^{d-1}$ and $\mathcal{P}(\nu',\F,f,g)$ respectively.     
        
        \cref{t:flatness-implies} is a consequence of the quadratic improvement of flatness in \cref{t:improvement of flatness} and an iteration procedure, which starts when $u$ is sufficiently flat. 
        In order to prove the $C^{2,\alpha}$ regularity of the free boundary, we apply this procedure at every point $x_0\in\partial\Omega_u\cap B_{1/2}$ to the rescalings 
        $$u_{x_0,\rho}(x):=\frac{u(x_0+\rho x)}{\rho},$$
       which are solutions to \eqref{def:def-viscosity-solution} with operator $\F_{x_0,\rho}$, right-hand side $f_{x_0,\rho}$, and free boundary condition $g_{x_0,\rho}$, where 
       \be\label{def:rescaling-operator}\begin{aligned}
       \F_{x_0,\rho}(M,\xi,x)&:=\rho\F\left(\rho^{-1}M,\xi,x_0+\rho x\right),\\
       f_{x_0,\rho}(x)&:=\rho f(x_0+\rho x),\\ 
       g_{x_0,\rho}(x,\nu)&:=g(x_0+\rho x,\nu).
	\end{aligned}\ee 
				Thanks to the assumptions on $\F$, $f$, $g$, the rescalings $\F_{x_0,\rho}$, $f_{x_0,\rho}$, $g_{x_0,\rho}$ satisfy the hypotheses of \cref{t:improvement of flatness} (see \cref{lemma:rate}).
				Moreover, since $p'\in\mathcal{P}(\nu',\F,f,g)$, the new polynomial $\rho p'$ satisfies $$\rho p'\in\mathcal{P}(\nu',\F_{x_0,\rho},f_{x_0,\rho},g_{x_0,\rho})\quad\text{and}\quad \|\rho p'\|_{L^\infty(B_1)}\le \rho r,$$
                which allows to iterate the improvement of flatness.
                This gives the second order Taylor expansion of $u$, with uniform rate of convergence, at any $x_0\in\partial\Omega_u\cap B_{1/2}$, and so \cref{t:flatness-implies} is proved.
   
\subsubsection{Remarks about the Perron's solutions}\label{remarksss}
Here below we make some remarks on the solutions obtained via the construction in \cref{t:existence}, and on the definitions of the family $\mathcal A$ and the strict minorant $\underline u$.\smallskip

\textit{(1) Existence for any boundary datum.}
We notice that, for homogeneous and concave $\mathcal F=\mathcal F(D^2u)$, viscosity solutions to the one-phase problem \eqref{def:def-viscosity-solution} can be obtained also via \cite{salsatuloneverzini-existenceviscositysol} by taking non-negative continuous boundary datum $\phi:\partial B_1\to\R$ and, as strict minorant (in the sense of \cite{salsatuloneverzini-existenceviscositysol}), a function $\underline u$ such that $\underline u=\phi$ on $\partial B_1$. The existence of such strict minorant is not a priori guaranteed for general boundary data $\phi$. \cref{t:existence} allows for general (even not homogeneous and concave) $\mathcal F=\mathcal F(D^2u,\nabla u,x)$ and provides existence of viscosity solutions for any (non-negative and continuous) boundary datum $\phi$. Indeed, since in \cref{t:existence} we only require that $\underline u\le \phi$ on $\partial B_1$, we can always take as strict minorant the constant function $\underline u\equiv 0$. \smallskip 

\textit{(2) The Perron's solutions are a special class of viscosity solutions.} 
We point out that all the solutions obtained this way belong to a special class of viscosity solutions and that not all viscosity solutions to \eqref{def:def-viscosity-solution} fall in this class.
The properties of the solutions $u$ obtained by \eqref{e:def-viscosity-solution-u} are a consequence of the choice of the particular family of supersolutions $\mathcal A$ and of the strict minorant $\underline u$ as a special subsolution. Moreover, the solution $u$ defined in \eqref{e:def-viscosity-solution-u} depends on the choice of the strict minorant $\underline u$ and so \cref{t:existence} provides different viscosity solutions for different choices of the strict minorant.

The fact that the functions in $\mathcal A$ satisfy $iii)$ of \cref{def:a-supersolution} allows to exclude solutions with collapsed free boundaries, as for instance $u(x)=c|x_d|$. In fact, for any $0<c\le 1$, the function $u(x)=c|x_d|$ is a viscosity solution to \eqref{def:def-viscosity-solution} with $\F=\F(D^2 u)$, $f\equiv0$ and $g\equiv 1$ (since $u$ cannot be touched from above by a test function at points on the zero set $\{x_d=0\}$), but it cannot be obtained by \eqref{e:def-viscosity-solution-u}; indeed, by $iii)$ of \cref{def:a-supersolution}, $w>0$ in $B_1$ for every $w\in\mathcal{A}$ and thus $u$ is a uniform limit (see \cref{corollary:ex0}) of functions that satisfy $\F(D^2 w)\ge 0$ in $B_1$; then $\F(D^2 u)\ge 0$ in $B_1$ and this is a contradiction with the maximum principle for the operator $\F$.

For what concerns the choice of the strict minorant, the fact that $\underline u$ satisfies $iii)$ of \cref{def:strictminorant} is crucial in the proof of the non-degeneracy of the viscosity solution $u$ defined in \eqref{e:def-viscosity-solution-u} (see \cref{prop:non-and-sol}), a property that also restricts the class of viscosity solutions obtained in \cref{t:existence}. For instance, the free boundary problem \eqref{def:def-viscosity-solution} admits also degenerate viscosity solutions, an example being the function $u(x)=\frac12x_d^2$, which is a viscosity solution to \eqref{def:def-viscosity-solution} with $\F=\Delta$, $f\equiv1$ and $g\equiv 1$. \cref{t:existence} guarantees that such degenerate solutions cannot be obtained by \eqref{e:def-viscosity-solution-u}.\smallskip 

\textit{(3) The dichotomy in \cref{def:a-supersolution}.}
The dichotomy in $iii)$ of \cref{def:a-supersolution} has a special role in the proof of \cref{t:existence} as it allows to deal with the presence of the right-hand side $f$ in \eqref{def:def-viscosity-solution}. Indeed, given an admissible supersolution $w\in\mathcal{A}_{\underline u,\phi}$, we replace $w$ with another supersolution $\overline w\in\mathcal{A}_{\underline u,\phi}$ which satisfies the interior condition
       $$\F(D^2\overline w,\nabla\overline w,x)=f\quad\text{in }\Omega_{\overline w}\cap B_1.$$ To ensure the non-negativity of the replacement $\overline w$, we need to consider an obstacle-type problem for a fully nonlinear operator (see \cref{lemma:ex1}). The replacement $\overline w$ satisfies $\Omega_{\overline w}\subset \Omega_w$ and this inclusion might be strict, so there might be points on the free boundary $\partial\Omega_{\overline w}\cap B_1$, which are not lying on $\partial\Omega_{w}\cap B_1$. In particular, at these points there might not be a function $\widetilde\varphi$ satisfying the condition $a)$ of \cref{def:a-supersolution}. On the other hand, by the $C^{1,\gamma}_{loc}$ regularity of the obstacle problem, at these points we have $\overline w=o(|x-x_0|)$, so $b)$ of \cref{def:a-supersolution} is fulfilled.

\smallskip
\textit{(4) The Perron's solution can be strictly positive.} We observe that the Perron's solution $u$ defined in \eqref{e:def-viscosity-solution-u} might also be strictly positive in the whole $B_1$; this can happen for instance if the boundary datum $\phi$ is large, or if the right-hand side $f$ is negative and large. Still, if there exists $w\in\mathcal{A}_{\underline u,\phi}$ such that the zero set $\{w=0\}\cap B_1$ is not empty, then also the free boundary $\partial\Omega_u\cap B_1$ is non-empty. \smallskip

    \textit{(5) Lipschitz regularity and non-degeneracy.}
     Finally, we notice that every viscosity solution $u$ to \eqref{def:def-viscosity-solution}, even if not obtained via the Perron's method, is locally Lipschitz continuous (see \cref{rem:every-sol-is-lip}), while for the non-degeneracy this is not true; indeed, one example of a degenerate viscosity solution is the function $u(x)=\frac12x_d^2$ mentioned above.	

	\subsection*{Acknowledgements} The authors are supported by the European Research Council (ERC), through the European Union's Horizon 2020 project ERC VAREG - \it Variational approach to the regularity of the free boundaries \rm (grant agreement No. 853404); they also acknowledge the MIUR Excellence Department Project awarded to the Department of Mathematics, University of Pisa, CUP I57G22000700001. B.V. acknowledges also support from the projects PRA 2022 14 GeoDom (PRA 2022 - Università di Pisa) and MUR-PRIN ``NO3'' (No. 2022R537CS).
	\section{Preliminaries}\label{section2}
	\subsection{Notations} 
	We denote by $B_r(x_0)$ the ball centered in $x_0\in\R^d$ with radius $r>0$ in $\R^d$. We also set $B_r:=B_r(0)$. Given $E\subset\R^d$, we denote by $$E^+:=E\cap \{x_d>0\}\quad\text{and}\quad E':=E\cap \{x_d=0\}.$$ We denote a point $x\in\R^d$ as $x=(x',x_d)\in\R^{d-1}\times\R^d$. We say that a constant is universal, if it depends on one or more of the quantities $d$, $\beta$, $\lambda$, $\Lambda$, $\gamma_0$, $[\F]_{\text{Lip}(\R^d)}$, $[\F]_{C^{0,\beta}(B_1)}$, $\|f\|$, $\|g\|$, $\mathcal{A}$, $\underline u$, defined in \ref{hyp:h1}, \ref{hyp:h3}, \cref{def:a-supersolution}, \cref{def:strictminorant}.
	\subsection{Viscosity settings}\label{section2.1} We work with viscosity solutions to the problem \eqref{def:def-viscosity-solution}, so we recall the main definitions for viscosity settings.
	Let $E\subset \R^d$ an open set and $v,w\in C^0(E)$, then:
	\begin{itemize}
		\item we say that $v$ touches $w$ from below at $x_0\in E$ if $$v(x_0)=w(x_0)\quad\text{and} \quad v\le w \quad\text{in a neighborhood of $x_0$}$$ (strictly if the inequality is strict);\smallskip
		\item we say that $v$ touches $w$ from above at $x_0\in E$ if $$v(x_0)=w(x_0)\quad\text{and} \quad v\ge w \quad\text{in a neighborhood of $x_0$} $$ (strictly if the inequality is strict).
	\end{itemize}
	Now we recall the definition of viscosity solutions of an elliptic equation in an open set.
	
	\begin{definition}[Viscosity solutions of elliptic equations] Let $E\subset \R^d$ an open set, $\F\in\EE$ and $f\in C^0(E)$. We say that $v$ is a viscosity subsolution (resp. supersolution) to the elliptic equation $\F(D^2v,\nabla v,x)=f$, and we write $$\F(D^2v,\nabla v,x)\ge f\quad\text{in } E\qquad\text{(resp $\le$)},$$
		if, for every $x_0\in E$ and for every $\varphi\in C^2(E)$ such that $\varphi$ touches $u$ from above (resp. below), then it holds $$\F(D^2\varphi(x_0),\nabla \varphi(x_0),x)\ge f(x_0)\qquad\text{(resp $\le$)}.$$ 
		We say that $v$ is a viscosity solution to $$\F(D^2v,\nabla v,x)= f\quad\text{in } E,$$ if it is both a viscosity subsolution and viscosity supersolution.
	\end{definition}

		\subsection{Remarks on the admissible family $\mathcal{A}$ and the strict minorant $\underline u$} 
        In the following lemma, we prove that the functions in the admissible family $\mathcal{A}$ from \cref{def:a-supersolution} are supersolutions to \eqref{def:def-viscosity-solution}, as well as, the strict minorant $\underline u$ from \cref{def:strictminorant} is subsolution to \eqref{def:def-viscosity-solution}. 
        \begin{lemma}\label{rem:supersol} 
        Suppose that $\F\in\EE$, $f\in C^0( B_1)$, $g\in {C^0}( B_1,\mathbb{S}^{d-1})$. Let $\mathcal A$ be the family from \cref{def:a-supersolution} and $\underline u$ be the strict minorant from \cref{def:strictminorant}. 
       \begin{itemize}
           \item If $w\in\mathcal{A}$, then $w$ is a supersolution to \eqref{def:def-viscosity-solution}, according to \cref{def:def-sol}.\smallskip
           \item $\underline u$ is a subsolution to \eqref{def:def-viscosity-solution}, according to \cref{def:def-sol}.
       \end{itemize}
        \end{lemma}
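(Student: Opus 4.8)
The plan is to verify the two bullet points directly against \cref{def:def-sol}, treating the interior condition and the free boundary condition separately. For the interior condition there is nothing to do: if $w\in\mathcal A$ then condition $ii)$ of \cref{def:a-supersolution} literally states $\F(D^2w,\nabla w,x)\le f$ in $\Omega_w\cap B_1$ in viscosity sense, which is exactly the interior requirement for a supersolution in \cref{def:def-sol}; similarly condition $ii)$ of \cref{def:strictminorant} gives $\F(D^2\underline u,\nabla\underline u,x)\ge f$ in $\Omega_{\underline u}\cap B_1$, the interior requirement for a subsolution. So the content of the lemma is entirely in checking the free boundary conditions, and here the point is that the formulations in \cref{def:a-supersolution} and \cref{def:strictminorant} are \emph{a priori} stronger (via the dichotomy, resp. the strict inequality) than what \cref{def:def-sol} asks for.

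First I would do the supersolution case. Let $w\in\mathcal A$, fix $x_0\in\partial\Omega_w\cap B_1$, and let $\varphi\in C^1(B_1)$ be such that $\varphi$ touches $w$ from below at $x_0$; we must show $|\nabla\varphi(x_0)|\le g\bigl(x_0,\nabla\varphi(x_0)/|\nabla\varphi(x_0)|\bigr)$ (interpreted as including the degenerate possibility $\nabla\varphi(x_0)=0$, which trivially satisfies the inequality since $g\ge\gamma_0>0$ is not required here — but in fact the relevant reading is simply that the displayed inequality holds, and $0\le g$ always). Invoke the dichotomy $iii)$ of \cref{def:a-supersolution} at $x_0$. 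In case $b)$, $w=o(|x-x_0|)$ as $x\to x_0$; combined with $\varphi\le w$ near $x_0$ and $\varphi(x_0)=w(x_0)=0$, and the fact that $w\ge 0$ forces $x_0$ to be a minimum of $\varphi$ along the positivity set, one deduces $\nabla\varphi(x_0)=0$ — here I would argue that $|\nabla\varphi(x_0)|\le |\nabla w^*(x_0)|$-type bounds or, more elementarily, that $0\le w(x)=o(|x-x_0|)$ together with $\varphi(x)\le w(x)$ and $\varphi(x_0)=0$ gives $\varphi(x)\le o(|x-x_0|)$, while $\varphi\in C^1$ with $\varphi(x_0)=0$ gives $\varphi(x)=\nabla\varphi(x_0)\cdot(x-x_0)+o(|x-x_0|)$, and taking $x\to x_0$ along the direction $+\nabla\varphi(x_0)$ forces $\nabla\varphi(x_0)=0$; then the required inequality $0\le g(\cdots)$ is void of content but consistent with \cref{def:def-sol}. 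In case $a)$, there is $\widetilde\varphi\in C^1(B_1)$ with $\widetilde\varphi_+$ touching $w$ from above at $x_0$ and $0\ne|\nabla\widetilde\varphi(x_0)|<g(x_0,\nabla\widetilde\varphi(x_0)/|\nabla\widetilde\varphi(x_0)|)$. Then near $x_0$ we have $\varphi\le w\le\widetilde\varphi_+$ with equality at $x_0$, and both $\varphi,\widetilde\varphi\in C^1$ vanish at $x_0$ along $\partial\Omega_w$; the standard one-phase comparison of gradients for touching test functions (the "$|\nabla\varphi(x_0)|\le|\nabla\widetilde\varphi(x_0)|$" observation, which one checks by expanding to first order along the inward normal ray into $\Omega_w$, using that $\widetilde\varphi_+\ge 0$ forces $\nabla\widetilde\varphi(x_0)$ to point inward) yields $|\nabla\varphi(x_0)|\le|\nabla\widetilde\varphi(x_0)|<g(x_0,\nu)$. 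One then checks that the unit vectors $\nabla\varphi(x_0)/|\nabla\varphi(x_0)|$ and $\nabla\widetilde\varphi(x_0)/|\nabla\widetilde\varphi(x_0)|$ agree — both equal the inward unit normal $\nu$ to $\partial\Omega_w$ at $x_0$, since each test function determines the same tangent hyperplane — so the bound reads $|\nabla\varphi(x_0)|\le g(x_0,\nabla\varphi(x_0)/|\nabla\varphi(x_0)|)$, which is the supersolution free boundary condition of \cref{def:def-sol}.

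The subsolution case for $\underline u$ is analogous and simpler, with no dichotomy. Fix $x_0\in\partial\Omega_{\underline u}\cap B_1$ and let $\varphi\in C^1(B_1)$ be such that $\varphi_+$ touches $\underline u$ from above at $x_0$; we must show $0\ne|\nabla\varphi(x_0)|\ge g(x_0,\nabla\varphi(x_0)/|\nabla\varphi(x_0)|)$. By $iii)$ of \cref{def:strictminorant} there is $\widetilde\varphi\in C^1(B_1)$ touching $\underline u$ from below at $x_0$ with $0\ne|\nabla\widetilde\varphi(x_0)|>g(x_0,\nabla\widetilde\varphi(x_0)/|\nabla\widetilde\varphi(x_0)|)$. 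Near $x_0$ we have $\widetilde\varphi\le\underline u\le\varphi_+$ with equality at $x_0$; the same first-order comparison along the inward normal ray gives $|\nabla\varphi(x_0)|\ge|\nabla\widetilde\varphi(x_0)|>0$, and again the two normalized gradients coincide with the inward unit normal $\nu$ of $\partial\Omega_{\underline u}$ at $x_0$, so $|\nabla\varphi(x_0)|\ge|\nabla\widetilde\varphi(x_0)|>g(x_0,\nu)=g(x_0,\nabla\varphi(x_0)/|\nabla\varphi(x_0)|)$, which is the subsolution free boundary condition.

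The only genuinely delicate point — the "main obstacle," such as it is — is the comparison-of-gradients step: from $\varphi\le w\le\widetilde\varphi_+$ (all $C^1$, vanishing at $x_0$, with $x_0$ a free-boundary point) one must extract both that the normalized gradients are the same unit vector and that their magnitudes are ordered. I would justify this by noting that at a free boundary point of a nonnegative function, any $C^1$ function whose positive part touches from above, or which touches from below, must have gradient at $x_0$ that is a nonnegative multiple of the inward normal $\nu$ (otherwise the half-plane $\{(x-x_0)\cdot\nabla\varphi(x_0)>0\}$ would protrude outside $\Omega_w$ near $x_0$, contradicting nonnegativity or the touching), which pins the direction; the magnitude ordering then follows by restricting the chain of inequalities to the ray $x_0+t\nu$, $t\downarrow 0$, dividing by $t$, and letting $t\to 0$. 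This is a standard fact in the viscosity theory of one-phase problems and I would either cite it or include the two-line ray argument inline.
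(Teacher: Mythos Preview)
Your proposal is correct and follows essentially the same approach as the paper: both reduce to the free boundary condition, dispose of case $b)$ of the dichotomy by showing it forces $\nabla\varphi(x_0)=0$, and in case $a)$ sandwich $\varphi\le w\le\widetilde\varphi_+$ and compare gradients along rays. One small remark: your phrasing that the two normalized gradients ``both equal the inward unit normal $\nu$ to $\partial\Omega_w$'' is slightly imprecise since $\partial\Omega_w$ need not have a well-defined normal at $x_0$; what the sandwich actually gives (and what the paper states) is that $\nabla\varphi(x_0)$ and $\nabla\widetilde\varphi(x_0)$ are positively proportional, which is all you use.
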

        \begin{proof}
        We only prove the first point, the proof of the second point being analogous.
        Given $w\in\mathcal{A}$, we only need to prove that $w$ satisfies the supersolution condition on the free boundary, according to \cref{def:def-sol}. Let $\varphi\in C^1(B_1)$ which touches $w$ from below at $x_0\in \partial\Omega_{w}\cap B_1$, with $|\nabla\varphi(x_0)|\not=0$, then there is a function $ \widetilde\varphi\in C^1(B_1)$, such that $\widetilde\varphi_+$ touches $w$ from above at $x_0$ and $|\nabla\widetilde \varphi(x_0)|\not=0$, otherwise $$\nabla\varphi(x_0)\cdot (x-x_0)+o(|x-x_0|)=\varphi(x)\le  w(x)=o(|x-x_0|)\quad\text{as }x\to x_0,$$ which is a contradiction.
    Therefore $$\nabla\varphi(x_0)\cdot (x-x_0)+o(|x-x_0|)=\varphi\le w\le \widetilde\varphi_+=\left(\nabla\widetilde\varphi(x_0)\cdot (x-x_0)+o(|x-x_0|)\right)_+\quad\text{as }x\to x_0.$$ Choosing $x=x_0+|x-x_0|\nu$ in the last inequality, where $\nu\in\R^d$ is a unit vector such that $\nabla\widetilde\varphi(x_0)\cdot\nu>0$, we get $$\nabla\varphi(x_0)=\nabla\widetilde\varphi(x_0).$$
    Then
    $$|\nabla \varphi(x_0)|=|\nabla \widetilde \varphi(x_0)|< g\left(x_0,\frac{\nabla \widetilde\varphi(x_0)}{|\nabla\widetilde \varphi(x_0)|}\right)=g\left(x_0,\frac{\nabla \varphi(x_0)}{|\nabla \varphi(x_0)|}\right),$$ i.e.~$w$ is a supersolution.
    \end{proof}
    In the next lemma, we prove that the free boundaries of the strict minorant $\underline u$ from \cref{def:strictminorant} and of a supersolution to \eqref{def:def-viscosity-solution} cannot intersect. This lemma will be crucial in the proof of the non-degeneracy property of the function $u$ defined in \eqref{e:def-viscosity-solution-u} (see \cref{prop:non-and-sol}).
    \begin{lemma}[A strong maximum principle]\label{lemma:maximum}
    Suppose that $\F\in\EE$, $f\in C^0(  B_1)$, $g\in {C^0}( B_1,\mathbb{S}^{d-1})$. Let $\underline u$ be the strict minorant from \cref{def:strictminorant} and $v$ be a supersolution in the sense of \cref{def:def-sol}. If $v\ge \underline u$ in $B_1$, then either $v>\underline u$ on $\overline\Omega_{\underline u}\cap B_1$ or $v\equiv \underline u>0$ in $B_1$.
    \end{lemma}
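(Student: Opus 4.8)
The plan is to combine the classical strong maximum principle for fully nonlinear uniformly elliptic equations, applied in the interior of $\Omega_{\underline u}$, with a touching argument at the free boundary $\partial\Omega_{\underline u}$ that exploits condition $iii)$ of \cref{def:strictminorant}. Since $v\ge \underline u\ge 0$ in $B_1$, we have $\Omega_{\underline u}\cap B_1\subset\Omega_v\cap B_1$, and on $\Omega_{\underline u}\cap B_1$ both inequalities $\F(D^2\underline u,\nabla\underline u,x)\ge f$ and $\F(D^2 v,\nabla v,x)\le f$ hold in the viscosity sense. By the uniform ellipticity \eqref{eq:pucci}, it is standard (see, e.g., \cite{caffarellicabre}) that $w:=v-\underline u\ge 0$ is then a viscosity supersolution of the Pucci extremal equation
\[
\mathcal{M}^-(D^2 w)-[\F]_{\text{Lip}(\R^d)}|\nabla w|\le 0\qquad\text{in }\Omega_{\underline u}\cap B_1.
\]
Assume the first alternative fails; then, since $v\ge \underline u$, there is $x_0\in\overline\Omega_{\underline u}\cap B_1$ with $v(x_0)=\underline u(x_0)$, and I would distinguish the cases $x_0\in\partial\Omega_{\underline u}\cap B_1$ and $x_0\in\Omega_{\underline u}\cap B_1$.

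The crucial point is the following claim: \emph{if $y\in\partial\Omega_{\underline u}\cap B_1$ satisfies $v(y)=\underline u(y)$, then we reach a contradiction.} Indeed $\underline u(y)=v(y)=0$, and by $iii)$ of \cref{def:strictminorant} there is $\widetilde\varphi\in C^1(B_1)$ touching $\underline u$ from below at $y$ with $0\neq|\nabla\widetilde\varphi(y)|>g\left(y,\frac{\nabla\widetilde\varphi(y)}{|\nabla\widetilde\varphi(y)|}\right)$. Choosing a unit vector $\nu$ with $\nabla\widetilde\varphi(y)\cdot\nu>0$, we get $v(y+t\nu)\ge\underline u(y+t\nu)\ge\widetilde\varphi(y+t\nu)>0$ for all small $t>0$, so $y\in\partial\Omega_v\cap B_1$; moreover $\widetilde\varphi\le \underline u\le v$ near $y$ with $\widetilde\varphi(y)=v(y)$, hence $\widetilde\varphi$ touches $v$ from below at $y$. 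Since $v$ is a supersolution, \cref{def:def-sol} forces $|\nabla\widetilde\varphi(y)|\le g\left(y,\frac{\nabla\widetilde\varphi(y)}{|\nabla\widetilde\varphi(y)|}\right)$, which contradicts the strict inequality above. In particular $x_0\notin\partial\Omega_{\underline u}$, so $x_0\in\Omega_{\underline u}\cap B_1$.

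In the remaining case $x_0\in\Omega_{\underline u}\cap B_1$ we have $\underline u(x_0)>0$. Let $U$ be the connected component of $\Omega_{\underline u}\cap B_1$ containing $x_0$; since $w\ge 0$ is a viscosity supersolution of the displayed extremal equation in $U$ and $w(x_0)=0$, the strong maximum principle (see, e.g., \cite{caffarellicabre}) gives $w\equiv 0$ in $U$, that is $v\equiv\underline u$ in $U$. It remains to show $U=B_1$. If not, pick $y_0\in\partial U\cap B_1$; this point cannot belong to the open set $\Omega_{\underline u}\cap B_1$ (otherwise a whole neighbourhood of $y_0$ would lie in $\Omega_{\underline u}\cap B_1$ and, meeting $U$, would be contained in $U$ by connectedness), so $y_0\in\partial\Omega_{\underline u}\cap B_1$; by continuity $v(y_0)=\underline u(y_0)$, and the claim yields a contradiction. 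Hence $U=\Omega_{\underline u}\cap B_1=B_1$, so $\underline u>0$ in $B_1$ and $v\equiv\underline u$ in $B_1$, which is the second alternative.

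The main obstacle is the boundary step: one must show that a free boundary point of $\underline u$ is automatically a free boundary point of $v$ — this is precisely where the non-degeneracy encoded in $\nabla\widetilde\varphi(y)\neq 0$ in \cref{def:strictminorant}$(iii)$ is used — and that the same test function $\widetilde\varphi$ is admissible from below for $v$. The interior part is routine modulo the (standard) fact that the difference of a viscosity subsolution and a viscosity supersolution of $\F=f$ is a viscosity supersolution of the corresponding Pucci extremal equation, which is proved via the theorem on sums.
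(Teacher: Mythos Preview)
Your proof is correct and follows essentially the same two-case strategy as the paper: rule out touching at points of $\partial\Omega_{\underline u}\cap B_1$ via the strict-minorant test function $\widetilde\varphi$, and at interior points apply the strong maximum principle to propagate the equality and reduce back to the boundary case. You are in fact more careful than the paper in two places: you explicitly justify why $y\in\partial\Omega_v$ (using $\nabla\widetilde\varphi(y)\neq 0$), and you argue via connected components rather than asserting directly that $v\equiv\underline u$ on all of $\overline\Omega_{\underline u}\cap B_1$.
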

    \begin{proof}  
    Suppose that $v$ touches $\underline u$ from above at a point $x_0\in B_1\cap\overline\Omega_{\underline u}$. If $x_0\in \partial\Omega_{\underline u}$, then it also holds $x_0\in\partial\Omega_v$. By definition of $\underline u$, there should be a function $\widetilde\varphi$ as in \cref{def:strictminorant}, which is a contradiction with the fact that $v$ is a supersolution. If $x_0\in \Omega_{\underline u}$, then both $v$ and $\underline u$ are positive in a neighborhood of $x_0$, so from the maximum principle for the operator $\mathcal F$, we have that $v\equiv \underline u$ on $\overline\Omega_{\underline u}\cap B_1$. In particular, there is a boundary point $y_0\in\partial\Omega_v\cap\partial\Omega_{\underline u}\cap\overline B_1$. If $y_0\in B_1$, then the conclusion follows from the previous case, otherwise $v\equiv \underline u >0$ in $B_1$. 
    \end{proof}

        \subsection{Interior Harnack 
		inequality} 
        We recall the interior Harnack inequality for viscosity solutions of a fully nonlinear operator with right-hand side.
		\begin{proposition}[Interior Harnack inequality]\label{prop:interiorharnack} Let $\F\in\EE$ and let $v:B_{2\rho}(x_0)\to\R$ be a non-negative continuous viscosity solution to $$
			\F(D^2v,\nabla v,x)=f\quad\text{in } B_{2\rho}(x_0),$$ for some $x_0\in B_1$ and $\rho>0$. Then $$\sup_{B_\rho(x_0)}v\le C\left(\inf_{B_\rho(x_0)}v+\|f\|_{L^\infty(B_{2\rho}(x_0))}\right)$$
			for some universal constant $C>0$.
		\end{proposition}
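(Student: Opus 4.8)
The plan is to reduce the problem to the standard Harnack inequality for viscosity solutions of uniformly elliptic equations with measurable coefficients, as in Caffarelli--Cabré. First I would observe that, because $\F\in\EE$, the function $v$ satisfies the two-sided Pucci differential inequalities: from \eqref{eq:pucci} with $\eta = \nabla v$ and a careful reading, $v$ is a viscosity solution of
\be
\mathcal{M}^-(D^2 v) - [\F]_{\text{Lip}(\R^d)}|\nabla v| \le f \le \mathcal{M}^+(D^2 v) + [\F]_{\text{Lip}(\R^d)}|\nabla v|
\quad\text{in } B_{2\rho}(x_0),
\ee
so in particular $v\in \overline S(\lambda,\Lambda, [\F]_{\text{Lip}(\R^d)}, f)$ in the notation of the theory of $L^p$-viscosity / $C$-viscosity solutions with a first-order term and bounded right-hand side. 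The key point is that the Harnack inequality holds for this class; this is the content of the Krylov--Safonov / Caffarelli--Cabré theory extended to include a gradient term (see \cite{caffarellicabre}), giving, on $B_\rho(x_0)$,
\be
\sup_{B_\rho(x_0)} v \le C\Big(\inf_{B_\rho(x_0)} v + \rho\,\|f\|_{L^\infty(B_{2\rho}(x_0))}\Big),
\ee
with $C$ depending only on $d$, $\lambda$, $\Lambda$ and $[\F]_{\text{Lip}(\R^d)}$ — hence universal.

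The second step is to remove the dependence on $\rho$ and put the inequality in the stated (scale-invariant, but with the simpler constant in front of $\|f\|$) form. Here I would rescale: set $\widetilde v(y) := v(x_0 + \rho y)$ for $y\in B_2$, which solves $\widetilde\F(D^2\widetilde v, \nabla\widetilde v, y) = \widetilde f(y)$ in $B_2$ with $\widetilde\F(M,\xi,y) := \rho\,\F(\rho^{-1}M, \rho^{-1}\xi, x_0+\rho y)$ — note this rescaled operator still belongs to $\EE$ with the \emph{same} ellipticity constants $\lambda,\Lambda$ but with Lipschitz constant $\rho^{-1}[\F]_{\text{Lip}}$ in the gradient slot, while $\widetilde f(y) = \rho f(x_0+\rho y)$ has $\|\widetilde f\|_{L^\infty(B_2)} = \rho\|f\|_{L^\infty(B_{2\rho}(x_0))}$. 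Since $\rho \in (0,1)$ (we may assume $B_{2\rho}(x_0)\subset B_1$, so $\rho<1/2$), the Lipschitz constant only increases, and the dependence of the Harnack constant on this quantity is monotone (larger lower-order term only makes the constant bigger), so applying the unit-scale Harnack inequality to $\widetilde v$ on $B_1\subset B_2$ yields $\sup_{B_1}\widetilde v \le C(\inf_{B_1}\widetilde v + \|\widetilde f\|_{L^\infty(B_2)})$, i.e. exactly the claim after undoing the rescaling, possibly with $C$ depending also on an upper bound for $[\F]_{\text{Lip}(\R^d)}$ — still universal.

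The main obstacle, and the only real subtlety, is making precise the claim that $v$ lies in the Pucci class with a gradient term and that Harnack holds for that class uniformly in the lower-order coefficient. For $C$-viscosity solutions this requires a version of the Aleksandrov--Bakelman--Pucci estimate and the Krylov--Safonov measure estimate that tolerate a bounded first-order term; these are standard but I would cite them carefully (Caffarelli--Cabré \cite{caffarellicabre}, or the treatments in the literature on viscosity solutions with drift). If one instead prefers to avoid the gradient term entirely, an alternative is to freeze: since $v$ is (by interior regularity for the class $\overline S$) locally Lipschitz with a universal bound, one may absorb $[\F]_{\text{Lip}}|\nabla v|$ into the right-hand side, treating $f - [\F]_{\text{Lip}}|\nabla v|$ and $f + [\F]_{\text{Lip}}|\nabla v|$ as new bounded right-hand sides; however this introduces a dependence of the Harnack constant on $\|\nabla v\|_{L^\infty}$, which is not allowed here, so the cleaner route is genuinely to invoke the Harnack inequality for the Pucci class with drift. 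Once that is granted, everything else is the routine rescaling above.
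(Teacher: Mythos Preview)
Your approach is the same as the paper's: the paper simply cites \cite[Theorem 4.3]{caffarellicabre} and \cite[Theorem 3.5]{AmendolaRossiVitolo-Harnack}, and your first paragraph --- reducing to the Pucci class with drift via \eqref{eq:pucci} and invoking the standard Harnack inequality with bounded right-hand side --- is that argument spelled out, and already gives the claim once one uses $\rho\|f\|_{L^\infty}\le\|f\|_{L^\infty}$ for $\rho$ bounded (implicit in all of the paper's applications).

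The rescaling in your second step is both unnecessary and flawed. With $\widetilde v(y)=v(x_0+\rho y)$ one has $D^2\widetilde v=\rho^2 D^2 v$, so the Hessian slot of the rescaled operator must carry $\rho^{-2}M$, not $\rho^{-1}M$; the paper's rescaling \eqref{def:rescaling-operator} gets $\rho^{-1}M$ only because it uses $u_\rho=u/\rho$. More seriously, if the rescaled drift coefficient is $\rho^{-1}[\F]_{\text{Lip}(\R^d)}$, then it blows up as $\rho\to0$, and your monotonicity observation goes the wrong way: a larger drift makes the Harnack constant \emph{larger}, not uniformly bounded, so you cannot extract a universal constant this way. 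Fortunately this entire step is superfluous, as noted above.
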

		\begin{proof}
			See \cite[Theorem 4.3]{caffarellicabre} or \cite[Theorem 3.5]{AmendolaRossiVitolo-Harnack}.
		\end{proof}

		\subsection{Construction of barriers}
		In this subsection we construct two barriers that will be useful in the proof of \cref{t:existence}.
		\begin{lemma}[A radial subsolution]\label{lemma:barrier1}
			Given $\rho>0$, there is a non-negative continuous function $\psi:\overline B_\rho\to\R$ such that the following holds. Suppose that $ \F\in\EE$ and let $\F_r:=\F_{x_0,r}$ be the rescaling as in \eqref{def:rescaling-operator}. Then, for every $\delta>0$ there exists a constant $\sigma=\sigma(\delta)>0$ such that the function $\zeta:=\delta\psi$ satisfies
			$$\begin{cases}
				\F_{r}( D^2\zeta, \nabla\zeta,x)> \sigma\quad&\text{in } B_\rho\setminus\overline B_{\rho/2},\\
				|\nabla\zeta|\not=0\quad&\text{in } B_{\rho}\setminus\overline B_{\rho/2},\\
				\zeta=0\quad&\text{on }\partial B_{\rho},\\
				\zeta= \delta \quad&\text{in } B_{\rho/2},
			\end{cases}$$ for every $r\in(0,1]$. 
		\end{lemma}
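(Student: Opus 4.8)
The plan is to construct $\psi$ explicitly as a radial function of the form $\psi(x) = c\,(|x|^{-\gamma} - \rho^{-\gamma})$ on the annulus $\overline B_\rho \setminus B_{\rho/2}$, extended by the constant value $c\,((\rho/2)^{-\gamma} - \rho^{-\gamma})$ on $B_{\rho/2}$, where the exponent $\gamma > 0$ and the constant $c>0$ are chosen so that $\psi$ is continuous, non-negative, vanishes on $\partial B_\rho$, equals $1$ on $B_{\rho/2}$, and (crucially) is a strict subsolution for the Pucci minimal operator $\mathcal M^-$ on the annulus. This last point is the classical computation: for a radial function $\phi(|x|)$ the eigenvalues of $D^2\phi$ are $\phi''(r)$ (once) and $\phi'(r)/r$ ($d-1$ times), so for $\phi(r) = r^{-\gamma}$ one has $\phi'' = \gamma(\gamma+1)r^{-\gamma-2} > 0$ and $\phi'/r = -\gamma r^{-\gamma-2} < 0$, whence
\[
\mathcal M^-(D^2\psi)(x) = c\,\Big(\lambda\,\gamma(\gamma+1) - \Lambda\,(d-1)\gamma\Big)\,|x|^{-\gamma-2},
\]
which is strictly positive on the compact annulus $\overline B_\rho\setminus B_{\rho/2}$ provided $\gamma > \frac{\Lambda(d-1)}{\lambda} - 1$. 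I would fix such a $\gamma$, thereby obtaining a constant $c_0 > 0$ with $\mathcal M^-(D^2\psi) \ge c_0 > 0$ on the annulus; note also $|\nabla \psi| = c\gamma |x|^{-\gamma-1} \ne 0$ there, which gives the second line of the system.

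Next I would pass from $\mathcal M^-$ to the rescaled operator $\F_r = \F_{x_0,r}$. By definition $\F_r(M,\xi,x) = r\,\F(r^{-1}M,\xi,x_0+rx)$, so using the Pucci bound \eqref{eq:pucci} together with $\F(0,0,\cdot)\equiv 0$ we get, for $\zeta = \delta\psi$,
\[
\F_r(D^2\zeta,\nabla\zeta,x) = r\,\F(r^{-1}\delta D^2\psi,\,\delta\nabla\psi,\,x_0+rx) \ge r\Big(\mathcal M^-(r^{-1}\delta D^2\psi) - [\F]_{\mathrm{Lip}}\,\delta|\nabla\psi|\Big) = \delta\,\mathcal M^-(D^2\psi) - r\,\delta\,[\F]_{\mathrm{Lip}}|\nabla\psi|.
\]
On the annulus $\mathcal M^-(D^2\psi) \ge c_0$ while $|\nabla\psi|$ is bounded by a constant $C_1$ depending only on $\rho$, $\gamma$; hence $\F_r(D^2\zeta,\nabla\zeta,x) \ge \delta c_0 - \delta r C_1 [\F]_{\mathrm{Lip}} \ge \delta(c_0 - C_1[\F]_{\mathrm{Lip}})$ for all $r \in (0,1]$. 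If $c_0 - C_1[\F]_{\mathrm{Lip}} > 0$ we are done with $\sigma := \delta(c_0 - C_1[\F]_{\mathrm{Lip}})/2$; if not, I would simply enlarge $\gamma$ further, since increasing $\gamma$ makes $c_0$ grow like $\gamma^2$ while $C_1$ grows only like $\gamma$ — so for $\gamma$ large enough $c_0$ dominates $C_1[\F]_{\mathrm{Lip}}$ uniformly, and the $L^\infty$ gradient bound is absorbed. Then set $\sigma(\delta) := \delta(c_0 - C_1[\F]_{\mathrm{Lip}})$, which is positive and depends only on $\delta$ and universal quantities. The remaining two lines of the system, $\zeta = 0$ on $\partial B_\rho$ and $\zeta = \delta$ on $B_{\rho/2}$, hold by the normalization of $\psi$ chosen in the first step.

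The only genuine subtlety — and the step I expect to require the most care — is that the viscosity subsolution inequality $\F_r(D^2\zeta,\nabla\zeta,\cdot) > \sigma$ is claimed on the open annulus $B_\rho \setminus \overline B_{\rho/2}$ where $\zeta$ is smooth, so there is no issue at the gluing sphere $\{|x| = \rho/2\}$ (that sphere is not in the region where the inequality is asserted); nonetheless one should check that $\psi$ is $C^0$ across it, which is immediate from the choice of additive constant, and that $\zeta$ is genuinely $C^2$ on the open annulus so that the pointwise computation above is also the viscosity statement. Everything else is bookkeeping: one verifies $\psi \ge 0$ on $\overline B_\rho$ (true since $|x|^{-\gamma} \ge \rho^{-\gamma}$ there), continuity and the boundary values, and that all constants introduced ($\gamma$, $c$, $c_0$, $C_1$) depend only on $d,\lambda,\Lambda,[\F]_{\mathrm{Lip}},\rho$, hence are universal. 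This completes the construction.
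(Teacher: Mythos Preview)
Your overall approach --- an explicit radial barrier, Pucci lower bound, absorb the first-order term by taking the exponent large --- is the same as the paper's; the paper just uses the Gaussian profile $\psi(x)=\widetilde C(e^{-\gamma|x|^2}-e^{-\gamma})$ instead of your power profile $c(|x|^{-\gamma}-\rho^{-\gamma})$. Either works.

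There is, however, a real slip in your gradient-absorption step. You bound $\mathcal M^-(D^2\psi)\ge c_0$ and $|\nabla\psi|\le C_1$ \emph{separately} on the annulus and then assert that enlarging $\gamma$ makes $c_0$ grow like $\gamma^2$ while $C_1$ grows like $\gamma$. This is false: the minimum of $\mathcal M^-(D^2\psi)$ is attained at $|x|=\rho$ and the maximum of $|\nabla\psi|$ at $|x|=\rho/2$, and the factor $(\rho/2)^{-\gamma}/\rho^{-\gamma}=2^{\gamma}$ swamps any polynomial gain. With your normalization one in fact has $c_0\sim \lambda\gamma^{2}2^{-\gamma}\rho^{-2}\to 0$ and $C_1\sim 2\gamma/\rho\to\infty$, so $c_0-C_1[\F]_{\mathrm{Lip}}<0$ for large $\gamma$.

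The fix is to keep the estimate pointwise, exactly as the paper does: at each $x$ in the annulus,
\[
\mathcal M^-(D^2\psi)(x)-[\F]_{\mathrm{Lip}}|\nabla\psi(x)|
= c\gamma|x|^{-\gamma-2}\big(\lambda(\gamma+1)-\Lambda(d-1)-[\F]_{\mathrm{Lip}}|x|\big),
\]
and since $|x|\le\rho$ the bracket is positive once $\gamma>\tfrac{\Lambda(d-1)+[\F]_{\mathrm{Lip}}\rho}{\lambda}-1$. Fix such a $\gamma$, minimize the prefactor over the annulus (at $|x|=\rho$) to obtain a positive constant, and take $\sigma(\delta)$ equal to $\delta$ times that constant. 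With this correction your argument is complete.
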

		\begin{proof}
			For simplicity, we set $\rho=1$. Let $\delta>0$ and $$\psi(x):=\begin{cases}
			    \widetilde C(e^{-\gamma|x|^2}-e^{-\gamma})\quad &\text{if } x\in B_1\setminus \overline B_{1/2},\\ 1\quad &\text{if } x\in B_{1/2}
			\end{cases}$$ where $\widetilde C>0$ is such that $\psi$ is continuous and $\gamma>0$ to be chosen later. Then $$\nabla \psi=-2\widetilde C\gamma xe^{-\gamma|x|^2}\quad\text{and}\quad D^2\psi=2\widetilde C\gamma (2\gamma x\otimes x-I).$$ In particular, in an appropriate system of coordinates $$D^2\psi=2\widetilde C\gamma\text{diag}(2\gamma,-1,\ldots-1).$$
			By \eqref{eq:pucci}, for every $r\in(0,1)$ and $x\in B_1\setminus \overline B_{1/2}$, we have \be\label{eq:richiama}\begin{aligned} \F_{r}(D^2\zeta,\nabla \zeta,x)&\ge\mathcal{M}^-(D^2\zeta)-r[\F]_{\text{Lip}(\R^d)}|\nabla \zeta|
			\\
			&=2\delta\widetilde C\gamma e^{-\gamma|x|^2}(2\gamma\lambda-\Lambda(d-1))-2\delta\widetilde C\gamma [\F]_{\text{Lip}(\R^d)}|x|e^{-\gamma|x|^2}.\\&\ge2\delta\widetilde C\gamma e^{-\gamma|x|^2}(2\gamma\lambda-\Lambda(d-1)-[\F]_{\text{Lip}(\R^d)})> \sigma,\end{aligned}\ee if we choose a universal $\gamma>0$ large enough and $\sigma=\sigma(\delta)>0$, as desired.
		\end{proof}
        
		\begin{lemma}[A radial supersolution]\label{lemma:barrier2}
			Given $\rho>0$, there is a non-negative continuous function $\widetilde\psi:\overline B_1\to\R$ such that the following holds. Suppose that $ \F\in\EE$ and let $\F_r:=\F_{x_0,r}$ be the rescaling as in \eqref{def:rescaling-operator}. Then, for every $\delta>0$ there exists a constant $\sigma=\sigma(\delta)>0$ such that the function $\widetilde \zeta:=\delta\widetilde \psi$ satisfies
			$$\begin{cases}
				\F_{r}( D^2\widetilde \zeta, \nabla\widetilde \zeta,x)<-\sigma\quad&\text{in } B_\rho\setminus\overline B_{\rho/2},\\
				|\nabla\widetilde \zeta|\not=0\quad&\text{in } B_{\rho}\setminus\overline B_{\rho/2},\\
				\widetilde \zeta=\delta\quad&\text{on }\partial B_{\rho},\\
				\widetilde \zeta= 0 \quad&\text{in } B_{\rho/2},
			\end{cases}$$ for every $r\in(0,1]$. 
		\end{lemma}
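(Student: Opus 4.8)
The plan is to build $\widetilde\psi$ as the ``reflection'' $1-\psi$ of the radial subsolution $\psi$ from \cref{lemma:barrier1}, and to deduce the required differential inequality for $\widetilde\zeta$ directly from the one already established for $\zeta$, using the one-sided Pucci bounds and the elementary identity $\mathcal M^+(-N)=-\mathcal M^-(N)$. Concretely, I would first reduce to $\rho=1$ exactly as in \cref{lemma:barrier1}. Let $\psi$, together with the constant $\widetilde C>0$ and the universal $\gamma>0$, be as in the proof of \cref{lemma:barrier1}, and set
$$\widetilde\psi(x):=1-\psi(x)=\begin{cases}1-\widetilde C\big(e^{-\gamma|x|^2}-e^{-\gamma}\big)&\text{if }x\in B_1\setminus\overline B_{1/2},\\[2pt]0&\text{if }x\in B_{1/2}.\end{cases}$$
Since the profile $e^{-\gamma|x|^2}$ is strictly decreasing in $|x|$ and the normalisation gives $\psi\equiv1$ on $B_{1/2}$ and $\psi\equiv 0$ on $\partial B_1$, one has $0\le\psi\le1$ on $\overline B_1$, so $\widetilde\psi$ is continuous and non-negative, with $\widetilde\psi\equiv 0$ on $B_{1/2}$ and $\widetilde\psi\equiv1$ on $\partial B_1$. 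Given $\delta>0$, I set $\widetilde\zeta:=\delta\widetilde\psi=\delta-\zeta$, where $\zeta:=\delta\psi$ is the function from \cref{lemma:barrier1}; then $\widetilde\zeta=\delta$ on $\partial B_1$, $\widetilde\zeta=0$ on $B_{1/2}$, and $\nabla\widetilde\zeta=-\nabla\zeta$ does not vanish on $B_1\setminus\overline B_{1/2}$ since $\nabla\psi(x)=-2\widetilde C\gamma\, x\,e^{-\gamma|x|^2}\neq0$ for $x\neq0$.

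It then remains to estimate $\F_r(D^2\widetilde\zeta,\nabla\widetilde\zeta,x)$ from above on the annulus. Because $\F(0,0,\cdot)\equiv0$, the right-hand inequality in \eqref{eq:pucci}, rescaled according to the definition of $\F_{x_0,r}$ and using the $1$-homogeneity of $\mathcal M^+$, gives $\F_r(N,\xi,x)\le\mathcal M^+(N)+r[\F]_{\text{Lip}(\R^d)}|\xi|$ for all $N\in\mathcal S^{d\times d}$, $\xi\in\R^d$, $x\in B_\rho\setminus\overline B_{\rho/2}$ and $r\in(0,1]$. Applying this with $N=D^2\widetilde\zeta=-D^2\zeta$, $\xi=-\nabla\zeta$, and using $\mathcal M^+(-D^2\zeta)=-\mathcal M^-(D^2\zeta)$, I obtain
$$\F_r(D^2\widetilde\zeta,\nabla\widetilde\zeta,x)\le-\Big(\mathcal M^-(D^2\zeta)-r[\F]_{\text{Lip}(\R^d)}|\nabla\zeta|\Big).$$
By the computation \eqref{eq:richiama} in the proof of \cref{lemma:barrier1}, for a universal $\gamma$ large enough the term in parentheses is bounded below by a constant $\sigma=\sigma(\delta)>0$, uniformly in $x\in B_1\setminus\overline B_{1/2}$ and $r\in(0,1]$; hence $\F_r(D^2\widetilde\zeta,\nabla\widetilde\zeta,x)<-\sigma$ there, which is exactly the claimed inequality.

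The only genuinely delicate point is the sign handling in the second paragraph: since $\F$ need not be odd, one cannot simply write $\F_r(-D^2\zeta,-\nabla\zeta,x)=-\F_r(D^2\zeta,\nabla\zeta,x)$, and it is essential to use the \emph{upper} Pucci bound (rather than the lower one used in \cref{lemma:barrier1}) together with $\mathcal M^+(-N)=-\mathcal M^-(N)$. Once this is arranged, the estimate reduces verbatim to the one in \cref{lemma:barrier1}, so no new analytic obstacle appears; the remaining verifications (continuity, non-negativity, the boundary values on $\partial B_1$ and $B_{1/2}$, and $|\nabla\widetilde\zeta|\neq0$ on the annulus) are immediate from the explicit formula for $\widetilde\psi$.
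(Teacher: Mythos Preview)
Your proof is correct and follows exactly the approach indicated in the paper, which simply states that one takes $\widetilde\psi:=1-\psi$ and argues as in \cref{lemma:barrier1}. You have carefully spelled out the one nontrivial point---that one must use the \emph{upper} Pucci bound together with $\mathcal M^+(-N)=-\mathcal M^-(N)$ rather than attempting to exploit any (nonexistent) oddness of $\F$---which the paper leaves implicit.
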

		\begin{proof}
			The proof follows choosing $\widetilde \psi:=1-\psi$ and arguing as in \cref{lemma:barrier1}.
		\end{proof}
        \subsection{\texorpdfstring{$C^{2,\alpha}$}{C2a} regularity for the linearized problem}
		The linearized problem in the proof of the quadratic improvement of flatness in \cref{t:improvement of flatness} satisfies a fully nonlinear elliptic equation with oblique boundary condition
        (see \cref{lemma:linearized-problem}). We need the following $C^{2,\alpha}$ estimates for viscosity solutions to this problem.
		\begin{proposition}[$C^{2,\alpha}$ estimate for linearized problem]\label{prop:c2alphaestimate}
        There are universal constants $C_0>0$ and $\alpha_0\in(0,1)$ such that the following holds.
			Let $\widetilde u:B_{1/2}^+\cup B_{1/2}'\to \R$ be a non-negative continuous viscosity solution to the following problem \be\label{eq:lin}\begin{cases}
				\widetilde \F(D^2\widetilde u)=0 \quad&\text{in }B_{1/2}^+,\\
				\nabla \widetilde u\cdot \tau=0\quad&\text{on }B_{1/2}',
			\end{cases}\ee
            where $\widetilde \F\in\EE$ satisfies \ref{hyp:h2} and $\tau\in\R^d$ satisfies $\tau\cdot e_d= 1$ and $|\tau|\le C$, where $C>0$ is a universal constant. If $\|u\|_{L^\infty}\le 1,$ then $$\left\|\widetilde u(x)-\widetilde u(0)-\nabla \widetilde u(0)\cdot x-\frac12 D^2\widetilde u(0)x\cdot x\right\|_{L^\infty(B_{\rho})}\le C_0\rho^{2+\alpha_0}$$ for every $\rho\in(0,1/4).$ 
		\end{proposition}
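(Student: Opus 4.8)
The plan is to reduce the oblique boundary value problem \eqref{eq:lin} to a purely interior regularity problem by an explicit affine change of variables that straightens the oblique direction $\tau$ into the normal direction $e_d$, and then to invoke the interior Evans--Krylov theory together with the Schwarz reflection principle. First I would set $y = T x$, where $T$ is the linear map that fixes $\{x_d = 0\}$ pointwise and sends $e_d$ to $\tau$ (equivalently, $T^{-1}$ sends $\tau$ to $e_d$ while fixing the hyperplane); since $\tau\cdot e_d = 1$ and $|\tau|\le C$ is universal, both $T$ and $T^{-1}$ are bounded with bounded inverse, uniformly. Defining $v(y) := \widetilde u(T^{-1} y)$, a direct computation shows that $v$ solves a new fully nonlinear equation $\widehat\F(D^2 v) = 0$ in a set comparable to $B_{1/2}^+$, where $\widehat\F(N) := \widetilde\F\big( (T^{-1})^{t} N\, T^{-1} \big)$; crucially $\widehat\F$ is again uniformly elliptic (with ellipticity constants depending only on $\lambda,\Lambda,C$) and inherits the concavity/convexity from \ref{hyp:h2} because $N\mapsto (T^{-1})^{t} N\, T^{-1}$ is linear. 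The oblique condition $\nabla\widetilde u\cdot\tau = 0$ on $B_{1/2}'$ transforms into the homogeneous Neumann condition $\partial_{y_d} v = 0$ on the flat piece $\{y_d = 0\}$.

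Next I would reflect $v$ evenly across $\{y_d=0\}$, setting $\bar v(y',y_d) := v(y',|y_d|)$. The standard fact (for homogeneous Neumann data and an operator independent of the gradient and of $x$) is that $\bar v$ is a viscosity solution of $\widehat\F(D^2\bar v) = 0$ in the full ball $B_{1/2}$; this uses that $\widehat\F$ is invariant under the reflection $N \mapsto R N R$ with $R = \mathrm{diag}(1,\dots,1,-1)$, which holds because $R$ conjugation only changes the sign of the mixed $dd$-row/column entries, and one checks this symmetry is preserved under the construction of $\widehat\F$ (it is, since $T$ and $R$ commute appropriately — $T$ acts trivially on $\{x_d=0\}$). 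One must verify the reflected function is genuinely a viscosity solution across the hyperplane: a test function touching $\bar v$ from above (resp. below) at a point of $\{y_d=0\}$ must, after symmetrization in $y_d$, still touch and by the Neumann condition have the right sign of $\partial_{y_d}$, so the equation holds there too — this is the routine but slightly delicate gluing lemma.

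Now $\bar v$ is a bounded viscosity solution of a concave (or convex) uniformly elliptic equation $\widehat\F(D^2\bar v)=0$ in $B_{1/2}$ with $\|\bar v\|_{L^\infty}\le 1$. By the interior Evans--Krylov theorem (see \cite{caffarellicabre}), $\bar v \in C^{2,\alpha_0}(B_{1/4})$ with $\|\bar v\|_{C^{2,\alpha_0}(B_{1/4})} \le C_0$, for universal $\alpha_0\in(0,1)$ and $C_0>0$ depending only on $d,\lambda,\Lambda$ and the bound $C$ on $|\tau|$. The desired second-order Taylor estimate for $\bar v$ at the origin then follows immediately from this $C^{2,\alpha_0}$ bound: $\big\|\bar v(y)-\bar v(0)-\nabla\bar v(0)\cdot y - \tfrac12 D^2\bar v(0)y\cdot y\big\|_{L^\infty(B_\rho)}\le C_0\rho^{2+\alpha_0}$. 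Undoing the change of variables $y = Tx$ — which is linear and hence preserves the class of quadratic polynomials and only rescales the estimate by universal constants — gives the claimed bound for $\widetilde u$ on $B_\rho$ for all $\rho\in(0,1/4)$, after possibly enlarging $C_0$.

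The main obstacle I anticipate is not the interior estimate (which is classical) but the careful justification that the even reflection $\bar v$ is a viscosity solution of the \emph{same} equation across $\{y_d=0\}$: one needs the operator to be independent of $\nabla v$ and of the point $x$ for this to work cleanly, which is exactly why the statement is formulated for $\widetilde\F = \widetilde\F(D^2\widetilde u)$ only, and one needs the reflection symmetry $\widehat\F(RNR)=\widehat\F(N)$; verifying that this symmetry survives the straightening transformation $T$ is the one genuinely structural point to check. The other mild subtlety is bookkeeping the dependence of all constants on $|\tau|\le C$ so that the final $C_0,\alpha_0$ are universal.
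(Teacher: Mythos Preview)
The paper does not actually prove this proposition; it simply cites an external reference (\cite[Theorem 1.3]{obliqueregularity}). Your proposal attempts a self-contained argument via straightening plus even reflection plus interior Evans--Krylov, but the reflection step contains a genuine gap.

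The problem is the claimed invariance $\widehat\F(RNR)=\widehat\F(N)$. A general operator $\widetilde\F\in\EE$ satisfying \ref{hyp:h2} need not be invariant under conjugation by the reflection $R=\mathrm{diag}(1,\dots,1,-1)$: already a constant-coefficient linear operator $\widetilde\F(M)=\mathrm{tr}(AM)$ with $A_{id}\neq 0$ for some $i<d$ fails this symmetry. Nor does the straightening map manufacture it: with $T$ fixing $\{x_d=0\}$ and $Te_d=\tau=(\tau',1)$ one computes $T^{-1}RT=\begin{pmatrix} I & 2\tau' \\ 0 & -1\end{pmatrix}$, which is not $R$ unless $\tau'=0$, so your heuristic ``$T$ and $R$ commute appropriately'' is incorrect. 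Consequently the even reflection $\bar v$ solves $\widehat\F(D^2\bar v)=0$ in $\{y_d>0\}$ but only $\widehat\F(R\,D^2\bar v\,R)=0$ in $\{y_d<0\}$; glued together this is an equation whose ``coefficients'' jump across $\{y_d=0\}$, from which interior Evans--Krylov does not yield $C^{2,\alpha}$ (at best one recovers $C^{1,\alpha}$-type estimates). The boundary $C^{2,\alpha}$ theory for concave/convex fully nonlinear equations with oblique boundary conditions genuinely requires a boundary Evans--Krylov argument and cannot be reduced to the interior case by reflection; that is precisely what the cited reference supplies.
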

		\begin{proof}
			See \cite[Theorem 1.3]{obliqueregularity}.
		\end{proof}
        For the sake of completeness, we recall the definition of a viscosity solution to \eqref{eq:lin}. 
        We say that $\widetilde u:B_{1/2}^+\cup B_{1/2}'\to\R$ is a viscosity solution to \eqref{eq:lin} if $$\widetilde \F(D^2\widetilde u)=0\quad\text{in }B_{1/2}^+$$ in viscosity sense and for every $\varphi\in C^1(B_{1/2})$ such that $v$ touches $\varphi$ from below (resp. above) at $x_0\in B_{1/2}'$, we have that $$\nabla\varphi(x_0)\cdot \tau\ge 0\qquad\text{(resp. $\le$)}.$$
        We point out that the validity of being a viscosity solution to \eqref{eq:lin} can be verified using only polynomials as test functions.
        \section{Existence of solutions}\label{section3}
		In this section we prove \cref{t:existence}, namely, the existence of viscosity solutions to the one-phase problem \eqref{def:def-viscosity-solution}, according with \cref{def:def-sol}.
       For the two-phase problem the existence theory was developed in \cite{Caffarelli1988:HarnackApproachFreeBd3Existence,wangIII,DESILVA-perron,salsatuloneverzini-existenceviscositysol} (see also \cite{CaffarelliSalsa:GeomApproachToFreeBoundary}).
        We divide the proof of \cref{t:existence} in five steps.
        \subsection{Step 1. $u$ is Lipschitz and solves \eqref{def:def-viscosity-solution} in $\Omega_u$}\label{1step}
		In this subsection we prove the Lipschitz regularity of the function $u$ defined in \cref{t:existence} and we show that $u$ satisfies the interior condition in viscosity sense. In particular, we prove the following proposition.
		\begin{proposition}\label{prop:lip-and-sol} 
        Suppose that $\F\in\EE$, $f\in C^0( \overline B_1)$ and $g\in {C^0}(\overline B_1,\mathbb{S}^{d-1})$ satisfies the hypothesis \ref{hyp:h1}.
        Let $u:\overline B_1\to\R$ be the function from \cref{t:existence}, defined in \eqref{e:def-viscosity-solution-u}, then $u$ is locally Lipschitz continuous in $B_1$, namely, for every $\rho\in(0,1)$, we have $$\|\nabla u\|_{L^\infty(B_{\rho})}\le L.$$ for some universal constant $L>0$, which also depends on $\rho$. Moreover, $u$ satisfies the interior condition $$\F(D^2u,\nabla u,x)=f\quad\text{in }\Omega_u\cap B_1$$ in viscosity sense.
		\end{proposition}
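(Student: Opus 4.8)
The plan is to carry out the two steps announced in the introduction as \cref{lemma:ex1} and \cref{lemma:ex2}, and then assemble them. First I would establish the \emph{replacement lemma} (\cref{lemma:ex1}): given any $w\in\mathcal A_{\underline u,\phi}$, I solve a fully nonlinear obstacle problem in $B_1$ with obstacle $0$ and operator $\F$ and right-hand side $f$, using $w$ as a supersolution barrier from above and $\underline u\vee\phi$ (extended into $B_1$) from below — more precisely, one takes the smallest viscosity supersolution $\overline w$ of $\F(D^2 v,\nabla v,x)=f$ in $B_1$ that lies above $0$, above $\underline u$, and equals $\phi$ on $\partial B_1$ (or, equivalently, solves the obstacle problem $\min\{\F(D^2\overline w,\nabla\overline w,x)-f,\ \overline w\}\ =\ 0$ with the boundary datum and the constraint $\overline w\ge\underline u$). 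By standard fully nonlinear obstacle-problem theory (Perron for the obstacle problem plus comparison), $\overline w$ exists, satisfies $0\le\overline w\le w$, $\overline w=\phi$ on $\partial B_1$, $\F(D^2\overline w,\nabla\overline w,x)=f$ in $\Omega_{\overline w}\cap B_1$, and is $C^{1,\gamma}_{\loc}$ across the free boundary; in particular $\overline w=o(|x-x_0|)$ at every $x_0\in\partial\Omega_{\overline w}$, which — as explained in remark (3) of the introduction — is exactly what one needs to check membership $\overline w\in\mathcal A_{\underline u,\phi}$: at a free boundary point of $\overline w$ which is also a free boundary point of $w$ one inherits alternative $a)$ of \cref{def:a-supersolution} from $w$, and at the (possibly new) free boundary points one has alternative $b)$ from the $C^{1,\gamma}$ regularity. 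Since $\overline w\le w$, the infimum in \eqref{e:def-viscosity-solution-u} is unchanged if we only range over these replaced supersolutions; this also gives, via a diagonal/compactness argument, that $u$ is locally a uniform limit of such $\overline w$'s (\cref{corollary:ex0}).

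Second, I would prove the \emph{equi-Lipschitz estimate} (\cref{lemma:ex2}) for the replaced supersolutions $\overline w$. Fix $\rho\in(0,1)$ and a compact neighborhood. For a point $x_0$ with $\overline w(x_0)>0$, let $d_0:=\mathrm{dist}(x_0,\partial\Omega_{\overline w})$. If $B_{d_0}(x_0)\subset B_1$ is not too big, then $\overline w$ solves $\F(D^2\overline w,\nabla\overline w,x)=f$ in $B_{d_0}(x_0)$, so interior $C^{1,\alpha}$ estimates for fully nonlinear equations give $|\nabla\overline w(x_0)|\le C(d_0^{-1}\sup_{B_{d_0/2}(x_0)}\overline w + \|f\|_{L^\infty}d_0)$; it remains to bound $\sup_{B_{d_0/2}(x_0)}\overline w$ by $C d_0$ (a "$\overline w(x_0)\le C d_0$" bound then propagates by Harnack). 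The upper bound $\overline w(x_0)\le C d_0$ is obtained by comparison with the \emph{radial subsolution} barrier $\zeta=\delta\psi$ of \cref{lemma:barrier1}: rescaling at a free boundary point $y_0$ of $\overline w$ with $|x_0-y_0|=d_0$ at scale $r\sim d_0$, one builds from $\zeta$ a subsolution of the free boundary problem that lies below $\overline w$ on the relevant annulus boundary, and whose free boundary condition (with gradient $\le\gamma_0$, using \ref{hyp:h1}) forces it below $\overline w$ near $y_0$ as well — if instead $\overline w(x_0)$ were too large compared to $d_0$, the barrier $\zeta$ scaled by a suitable $\delta$ could be slid up to touch $\overline w$ from below at an interior point of the annulus while violating $\F_r(D^2\zeta,\nabla\zeta,x)>\sigma\ge f$, a contradiction with $\overline w$ being a supersolution of the interior equation. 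This "Harnack + barrier" mechanism is standard in the Alt–Caffarelli / De Silva circle of ideas, and combined with the interior gradient estimate yields $\|\nabla\overline w\|_{L^\infty(B_\rho)}\le L$ with $L$ universal (also depending on $\rho$, via the distance from $\partial B_1$ and the bounds on $\overline w$ coming from the boundary datum and $\underline u$).

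Finally I assemble: by \cref{corollary:ex0}, $u=\lim_k \overline w_k$ locally uniformly for a sequence in (the replaced) $\mathcal A_{\underline u,\phi}$; the uniform estimate $\|\nabla\overline w_k\|_{L^\infty(B_\rho)}\le L$ passes to the limit, so $u$ is Lipschitz on $B_\rho$ with the same constant. For the interior equation $\F(D^2 u,\nabla u,x)=f$ in $\Omega_u\cap B_1$: the inequality $\F(D^2 u,\nabla u,x)\le f$ in $\Omega_u$ follows from stability of viscosity supersolutions under uniform limits (each $\overline w_k$ satisfies it, and on any ball compactly contained in $\Omega_u$ the functions $\overline w_k$ are eventually positive, so the equation $\F=f$ holds there for $\overline w_k$, hence $\F\le f$ for $u$). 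The reverse inequality $\F(D^2 u,\nabla u,x)\ge f$ in $\Omega_u$ is the delicate point and the place where the infimum structure is used: if $\F(D^2 u,\nabla u,x)\ge f$ failed at some interior point $x_0\in\Omega_u$, one could strictly decrease $u$ in a small ball around $x_0$ while staying a supersolution and staying above $\underline u$ and above $\phi$ — i.e. produce a competitor in $\mathcal A_{\underline u,\phi}$ strictly below $u$ at $x_0$, contradicting the definition \eqref{e:def-viscosity-solution-u}. Carrying this out requires gluing the locally-modified function back to an admissible global supersolution (using the replacement lemma again to smooth out the free boundary condition, and \cref{lemma:maximum} / the strict minorant to keep it above $\underline u$), and this gluing is the main obstacle: one must ensure the modified function still satisfies item $iii)$ of \cref{def:a-supersolution} at all free boundary points — which is precisely why the dichotomy $a)$/$b)$ was built into the definition of $\mathcal A$.
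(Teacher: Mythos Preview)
Your overall strategy matches the paper's: replacement lemma (\cref{lemma:ex1}) to pass to the class $\mathcal A^+$ of supersolutions that actually solve the interior equation, then an equi-Lipschitz estimate (\cref{lemma:ex2}) for that class, then pass to the limit. The Lipschitz part is fine.

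The genuine gap is in your treatment of the interior equation. You yourself observe that the replaced functions $\overline w_k\in\mathcal A^+$ satisfy the \emph{full equation} $\F(D^2\overline w_k,\nabla\overline w_k,x)=f$ in $\Omega_{\overline w_k}$, and that on any ball $B\subset\subset\Omega_u$ they are eventually positive (indeed $\overline w_k\ge u>0$ on $B$, since $u$ is the infimum over $\mathcal A_{\underline u,\phi}$). But then you only extract $\F\le f$ for $u$ and declare the reverse inequality ``the delicate point,'' proposing a separate bump-down argument. This is a misstep: stability of viscosity \emph{solutions} (not just supersolutions) under locally uniform limits gives both inequalities at once, so $\F(D^2u,\nabla u,x)=f$ on $B$ directly. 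That is why the paper's proof of \cref{prop:lip-and-sol} is a single line (``follows directly from \cref{lemma:ex2} and \cref{remark:a}''). Your proposed bump-down construction is unnecessary, and --- as you yourself flag --- the gluing needed to keep the modified competitor inside $\mathcal A_{\underline u,\phi}$ is a real obstacle that you do not resolve; so as written the argument for $\F\ge f$ is incomplete.

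A minor discrepancy in the replacement step: the paper solves the obstacle problem in $\Omega_w\cap B_1$ with boundary datum $w$ on $\partial(\Omega_w\cap B_1)$, and proves $\overline w\ge\underline u$ a posteriori by a comparison argument, rather than solving in all of $B_1$ with $\overline w\ge\underline u$ imposed as an additional constraint. This does not affect correctness, but your formulation would require checking that the extra constraint does not spoil the $C^{1,\gamma}_{\loc}$ regularity at contact points with $\underline u$.
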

	 
       In order to prove \cref{prop:lip-and-sol}, we use the following lemmas.
	 \begin{lemma}\label{rem:minimum} Suppose that $\F\in\EE$, $f\in C^0( \overline B_1)$ and $g\in {C^0}(\overline B_1,\mathbb{S}^{d-1})$ satisfies the hypothesis \ref{hyp:h1}. Let $\mathcal{A}$ be the family of admissible supersolutions from \cref{def:a-supersolution} and suppose that $w_1,w_2\in\mathcal{A}$, then $w:=\min\{w_1,w_2\}\in\mathcal{A}$.
    \end{lemma}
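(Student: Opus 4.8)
The plan is to verify the three conditions in \cref{def:a-supersolution} directly for $w := \min\{w_1, w_2\}$. Condition $i)$ is immediate: the pointwise minimum of two continuous non-negative functions on $\overline B_1$ is again continuous and non-negative. For condition $ii)$, I would use the standard fact that the minimum of two viscosity supersolutions of a (degenerate) elliptic equation is again a viscosity supersolution. Concretely, if $\varphi \in C^2$ touches $w$ from below at some $x_0 \in \Omega_w \cap B_1$, then $x_0 \in \Omega_{w_1} \cap \Omega_{w_2}$ (since $w > 0$ there forces both $w_i > 0$), and $\varphi$ touches from below at $x_0$ whichever of $w_1, w_2$ realizes the minimum at $x_0$; applying the supersolution property of that $w_i$ gives $\F(D^2\varphi(x_0), \nabla\varphi(x_0), x_0) \le f(x_0)$, as needed.

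The heart of the matter is condition $iii)$, the dichotomy on the free boundary. So let $x_0 \in \partial\Omega_w \cap B_1$. The key structural observation is that $\Omega_w = \Omega_{w_1} \cap \Omega_{w_2}$, hence $\overline{\Omega_w} \subset \overline{\Omega_{w_1}} \cap \overline{\Omega_{w_2}}$, and therefore $x_0$ lies in $\partial\Omega_{w_1} \cup \partial\Omega_{w_2}$ — indeed $x_0$ must be a boundary point of at least one of the two sets (if $x_0$ were interior to both $\Omega_{w_1}$ and $\Omega_{w_2}$ it would be interior to $\Omega_w$). Without loss of generality say $x_0 \in \partial\Omega_{w_1} \cap B_1$. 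Now apply the dichotomy for $w_1$ at $x_0$. If alternative $b)$ holds for $w_1$, i.e. $w_1 = o(|x - x_0|)$ as $x \to x_0$, then since $0 \le w \le w_1$ we get $w = o(|x-x_0|)$ as well, so $b)$ holds for $w$. If instead alternative $a)$ holds for $w_1$, there is $\widetilde\varphi_1 \in C^1(B_1)$ with $(\widetilde\varphi_1)_+$ touching $w_1$ from above at $x_0$ and $0 \ne |\nabla\widetilde\varphi_1(x_0)| < g(x_0, \nabla\widetilde\varphi_1(x_0)/|\nabla\widetilde\varphi_1(x_0)|)$. Since $w \le w_1 \le (\widetilde\varphi_1)_+$ near $x_0$ and $w(x_0) = 0 = (\widetilde\varphi_1)_+(x_0)$ (because $x_0 \in \partial\Omega_w$ and $w_1(x_0)=0$), the same $\widetilde\varphi_1$ witnesses alternative $a)$ for $w$. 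In either case condition $iii)$ holds for $w$, completing the proof.

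I expect the main (though still modest) obstacle to be the careful bookkeeping in condition $iii)$: one must be sure that $x_0 \in \partial\Omega_w$ really does force $w_1(x_0) = 0$ or $w_2(x_0) = 0$, rather than merely forcing $x_0$ to be a limit of points where $w$ vanishes. This is where the identity $\Omega_w = \Omega_{w_1} \cap \Omega_{w_2}$ is used: $w(x_0) = 0$ means $\min\{w_1(x_0), w_2(x_0)\} = 0$, so at least one $w_i$ vanishes at $x_0$, and by relabeling we may take it to be $w_1$; then $x_0 \in \partial\Omega_{w_1}$ automatically (it is in $\overline{\Omega_w} \subset \overline{\Omega_{w_1}}$ and $w_1(x_0)=0$ puts it outside $\Omega_{w_1}$). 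Everything else is a routine application of the definitions and the comparison $0 \le w \le w_i$ near $x_0$.
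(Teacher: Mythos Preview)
Your proposal is correct and follows essentially the same approach as the paper's proof: verify conditions $i)$--$iii)$ of \cref{def:a-supersolution} directly, using that a test function touching $w$ from below must touch one of the $w_i$ from below for $ii)$, and that a barrier $\widetilde\varphi_+$ from above for $w_i$ at a free boundary point serves also for $w$ (with the $o(|x-x_0|)$ case handled by $0\le w\le w_i$) for $iii)$. Your treatment of $iii)$ is in fact slightly more careful than the paper's, since you explicitly reduce to a single $w_i$ with $x_0\in\partial\Omega_{w_i}$ before invoking the dichotomy, whereas the paper's phrasing (``otherwise $w_1=o(|x-x_0|)$ and $w_2=o(|x-x_0|)$'') tacitly assumes both $w_i$ vanish at $x_0$; your version avoids that minor imprecision.
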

    \begin{proof}
        Let $\varphi\in C^2(\Omega_w\cap B_1)$ which touches $w$ from below at $x_0\in \Omega_w$, then, without loss of generality, $\varphi$ touches $w_1$ from below at $x_0$, thus $\F(D^2 \varphi(x_0),\nabla \varphi(x_0),x_0)\le f(x_0)$.
        By arbitrariness of $x_0$, we deduce that $$\F(D^2 w,\nabla w,x)\le f\quad\text{in }\Omega_w\cap B_1$$ in viscosity sense. Now given $x_0\in\partial\Omega_w\cap B_1$, if there exists $\widetilde \varphi\in C^1(B_1)$ such that $\widetilde \varphi_+$ touches either $w_1$ or $w_2$ from above at $x_0$, then $\widetilde \varphi_+$ touches $w$ from above at $x_0$; otherwise $w_1=o(|x-x_0|)$ and $w_2=o(|x-x_0|)$ as $x\to x_0$, then $w=o(|x-x_0|)$.
    \end{proof}
    
    \begin{lemma}\label{lemma:ex1}
			Suppose that $\F\in\EE$, $f\in C^0( \overline B_1)$ and $g\in {C^0}(\overline B_1,\mathbb{S}^{d-1})$ satisfies the hypothesis \ref{hyp:h1}. Let $\mathcal A$ be the family of supersolutions from \cref{def:a-supersolution} and $\underline u$ be the strict minorant from \cref{def:strictminorant}. Then, for every $w\in\mathcal{A}$ with $w\ge \underline u$ in $\overline B_1$, there exists $ \overline w\in\mathcal{A}$ such that 
   \begin{equation}\label{e:equation-interior-for-overline-w}
   \F(D^2\overline w,\nabla \overline w,x)=f\quad\text{in } \Omega_{\overline w}\cap B_1.
   \end{equation}
   Moreover 
	\begin{equation}\label{e:inequality-for-overline-w}
\underline u\le\overline w\le w\quad\text{in } \overline B_1.
   \end{equation}
   and \be\label{eq:attacca-bordo} \overline w=w\quad\text{on }\partial B_1
   \ee
		\end{lemma}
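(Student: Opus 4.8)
The plan is to obtain $\overline w$ by solving an obstacle-type problem for the fully nonlinear operator $\F$, using $\underline u$ as an obstacle from below and $w$ as a barrier from above. Concretely, I would set
\[
\overline w := \inf\Big\{ v \in \mathcal A_{\underline u,\phi}' : v \ge \underline u \text{ in } \overline B_1,\ v \ge w \text{ on } \partial B_1,\ \F(D^2 v,\nabla v,x)\le f \text{ in } \{v>0\}\cap B_1 \Big\},
\]
or equivalently, since we only need \emph{one} admissible replacement, take $\overline w$ to be the unique solution of the obstacle problem
\[
\begin{cases}
\overline w \ge \underline u & \text{in } B_1,\\
\F(D^2\overline w,\nabla\overline w,x)\le f & \text{in } B_1,\\
\F(D^2\overline w,\nabla\overline w,x)= f & \text{in } \{\overline w > \underline u\}\cap B_1,\\
\overline w = w & \text{on } \partial B_1.
\end{cases}
\]
Existence and uniqueness of such $\overline w$, together with interior $C^{1,\gamma}_{\loc}$ regularity of $\overline w$ (and hence the fact that $\overline w = o(|x-x_0|)$ at free boundary points $x_0$ that are not on $\partial\Omega_w$), follow from the standard theory of obstacle problems for uniformly elliptic fully nonlinear operators; I would cite the relevant results for $\F\in\EE$. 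Note that since $\underline u$ is non-negative, so is $\overline w$, which gives condition $i)$ of \cref{def:a-supersolution}.

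\medskip

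Next I would verify the three defining properties. \textbf{Interior equation \eqref{e:equation-interior-for-overline-w}:} On the contact set $\{\overline w = \underline u\}\cap B_1$, at any point where $\underline u>0$ the function $\underline u$ satisfies $\F(D^2\underline u,\nabla\underline u,x)\ge f$ in viscosity sense while $\overline w$ satisfies $\F(D^2\overline w,\nabla\overline w,x)\le f$; since $\overline w$ touches $\underline u$ from above there, a standard viscosity argument (any test function touching $\overline w$ from below at such a point also touches $\underline u$ from below) forces $\F(D^2\overline w,\nabla\overline w,x) = f$ there. On $\{\overline w>\underline u\}$ the equation holds by construction of the obstacle problem. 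Hence $\F(D^2\overline w,\nabla\overline w,x)=f$ throughout $\Omega_{\overline w}\cap B_1 = \{\overline w>0\}\cap B_1$, noting that $\{\overline w>0\}\subset \{\overline w>\underline u\}\cup\{\underline u>0\}$. \textbf{Inequality \eqref{e:inequality-for-overline-w}:} $\overline w\ge\underline u$ holds by the obstacle constraint; $\overline w\le w$ follows from the comparison principle for $\F$, since $w$ is a supersolution of the same obstacle problem ($w\ge\underline u$ in $\overline B_1$ by hypothesis, $\F(D^2 w,\nabla w,x)\le f$ in $\Omega_w\cap B_1$ by $ii)$ of \cref{def:a-supersolution}, and $w\ge w$ on $\partial B_1$), so $w$ lies above the minimal element $\overline w$. \textbf{Boundary condition \eqref{eq:attacca-bordo}:} immediate from the Dirichlet datum in the obstacle problem, once continuity up to $\partial B_1$ is established via standard exterior-ball barriers.

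\medskip

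\textbf{The free boundary dichotomy $iii)$ of \cref{def:a-supersolution}:} this is the step I expect to be the main subtlety. Fix $x_0\in\partial\Omega_{\overline w}\cap B_1$. Since $\overline w\le w$ we have $\Omega_{\overline w}\subset\Omega_w$, so $x_0\in\overline\Omega_w$. Two cases occur. If $x_0\in\Omega_w$ (i.e.\ $w(x_0)>0$ while $\overline w(x_0)=0$), then $x_0$ lies in the region where the obstacle $\underline u$ is active or where $\overline w$ meets its zero set from the interior of $\Omega_w$; here the interior regularity of the solution to the fully nonlinear obstacle problem gives $\overline w\in C^{1,\gamma}_{\loc}$ near $x_0$ with $\nabla\overline w(x_0)=0$, hence $\overline w = o(|x-x_0|)$ as $x\to x_0$, so alternative $b)$ holds. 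If instead $x_0\in\partial\Omega_w\cap B_1$, then by $iii)$ of \cref{def:a-supersolution} applied to $w$, either there is $\widetilde\varphi\in C^1(B_1)$ with $\widetilde\varphi_+$ touching $w$ from above at $x_0$ and $0\ne|\nabla\widetilde\varphi(x_0)|<g(x_0,\nabla\widetilde\varphi(x_0)/|\nabla\widetilde\varphi(x_0)|)$ — and since $\overline w\le w$ with $\overline w(x_0)=w(x_0)=0$, the same $\widetilde\varphi_+$ touches $\overline w$ from above at $x_0$, so $a)$ holds for $\overline w$ — or $w=o(|x-x_0|)$, whence also $\overline w=o(|x-x_0|)$ and $b)$ holds. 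In all cases $\overline w\in\mathcal A$, completing the proof. The only genuinely technical point is invoking the precise interior regularity statement for the fully nonlinear obstacle problem to justify $\overline w=o(|x-x_0|)$ at the "new" free boundary points; everything else is comparison-principle and viscosity-testing bookkeeping.
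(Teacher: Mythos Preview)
Your overall strategy (solve an obstacle problem, inherit the interior equation from the non-contact set, and use $C^{1,\gamma}$ obstacle regularity to get the $o(|x-x_0|)$ alternative at new free boundary points) matches the paper's. But the specific obstacle problem you pose has a genuine defect that breaks the inequality $\overline w\le w$.

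You solve the obstacle problem on all of $B_1$ with obstacle $\underline u$, and then argue $\overline w\le w$ by saying ``$w$ is a supersolution of the same obstacle problem.'' This fails: condition $ii)$ of \cref{def:a-supersolution} only gives $\F(D^2w,\nabla w,x)\le f$ in $\Omega_w\cap B_1$, not in all of $B_1$. On the interior of $\{w=0\}$ one has $\F(0,0,x)=0$, so if $f<0$ there, $w$ is \emph{not} a supersolution of $\F\le f$. Concretely, take $\F(M,\xi,x)=\mathrm{tr}\,M$, $f\equiv -1$, $\underline u\equiv 0$, and any $w\in\mathcal A$ with $\{w=0\}$ containing a ball. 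Your obstacle problem then forces $\overline w''\le -1$ (hence strict concavity), so $\overline w>0$ throughout $B_1$ and in particular $\overline w>0=w$ on $\{w=0\}$; \eqref{e:inequality-for-overline-w} fails. The paper avoids this by solving the obstacle problem with obstacle $0$ \emph{inside} $\Omega:=\Omega_w\cap B_1$ (where $w$ genuinely is a supersolution) and extending by $0$ outside; then $\overline w\le w$ is automatic on $\overline B_1\setminus\Omega$ and follows by comparison on $\Omega$. The inequality $\underline u\le\overline w$ is then proved separately by a touching/maximum-principle argument in $\Omega$.

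Two smaller issues. First, your obstacle is $\underline u$, which is only continuous; the $C^{1,\gamma}_{\loc}$ regularity you invoke for the obstacle problem needs a $C^{1,\gamma}$ obstacle, so as stated you cannot conclude $\nabla\overline w(x_0)=0$ at contact points lying on $\partial\Omega_{\underline u}$. Using obstacle $0$ sidesteps this entirely. Second, your viscosity bookkeeping on the contact set is reversed: from $\overline w\ge\underline u$ with equality at $x_0$, a test function touching $\overline w$ from \emph{below} need not touch $\underline u$ from below. The correct direction is that any $\varphi$ touching $\overline w$ from \emph{above} also touches $\underline u$ from above, whence the subsolution property of $\underline u$ gives $\F(D^2\varphi,\nabla\varphi,x_0)\ge f(x_0)$; combined with $\F\le f$ from the obstacle problem this yields $\F=f$ at contact points in $\Omega_{\underline u}$.
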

		\begin{proof}
		    We set $\Omega:=\Omega_w\cap B_1$ and \bea\mathcal{V}:=\{v\in C^0(\overline \Omega):\ \F(D^2 v,\nabla v,x)\le f \text{ in }\Omega, \ v\ge0\text{ in }\Omega,\ v= w\text{ on }\partial \Omega \}\eea and we observe that $\mathcal{V}\neq\emptyset$, since $w\in\mathcal{V}$. 
      We define $\overline w:\overline\Omega\to\R$ such that 
      $$\overline w(x):=\inf\{v(x):v\in\mathcal V\}$$ 
      and we claim that $\overline w$ is a viscosity solution to the obstacle problem in $\Omega$
      \be\label{obstacleproblem}\begin{cases}
				\F(D^2 \overline w,\nabla \overline w,x)=f \quad &\text{in }\Omega\cap\Omega_{\overline w},\\
            \F(D^2 \overline w,\nabla \overline w,x)\le f \quad &\text{in }\Omega,\\
                \overline w\ge0	\quad &\text{in } \Omega,
			\end{cases}\ee
   and that 
\be\label{e:obstacleproblem-continuity}
				\overline w\in C^0(\overline\Omega)\qquad\text{and}\qquad 
                \overline w=w \quad\text{on } \partial\Omega.
\ee
   
Let $D\subset \Omega$ a countable dense set in $\Omega$. By a diagonal argument, there is a sequence $\{v_n\}\subset\mathcal{V}$ such that \be\label{razionali}v_n(y)\to \overline w(y)\quad\text{for every}\quad y\in D\ee as $n\to+\infty$. Moreover, since the minimum of two supersolutions is still a supersolution (see \cref{rem:minimum}), we can assume that $v_n(x)$ is non-decreasing for every $x\in\overline\Omega$.	

   Let $x_0\in\Omega$, then there is $\rho>0$ such that $B_\rho(x_0)\subset \Omega$. By \cite[Theorem 2.11, Theorem 2.12]{kt19}, there exists an $L^p$-viscosity solution $h_n$ to
    $$\begin{cases}
				\F(D^2 h_n,\nabla h_n,x)=f\quad &\text{in }B_\rho(x_0)\cap\Omega_{h_n},\\
    \F(D^2 h_n,\nabla h_n,x)\le f \quad &\text{in }B_\rho(x_0),\\
	h_n\ge 0 \quad &\text{in }	B_\rho(x_0),\\
    h_n=v_n \quad &\text{on } \overline \Omega\setminus B_\rho(x_0),
			\end{cases}$$ 
   with $h_n\in C^{1,\gamma}_{loc}(B_\rho(x_0))\cap C^0(\overline B_\rho(x_0))$. Since $\F$ and $f$ are continuous, $h_n$ is also a viscosity solution in our sense (see \cite[Proposition 2.9]{ccks96}).
   In particular $\{h_n\}\subset\mathcal{V}$. By the $C_{loc}^{1,\gamma}$ estimates for $h_n$, up to a subsequence, $h_n$ uniformly converges (in the compact subsets of $B_\rho(x_0)$) to some function $h:B_\rho(x_0)\to\R$, where $h$ is a viscosity solution to 
   $$\begin{cases}
				\F(D^2 h,\nabla h,x)=f\quad &\text{in }B_\rho(x_0)\cap\Omega_{h},\\
    \F(D^2 h,\nabla h,x)\le f \quad &\text{in }B_\rho(x_0),\\
	h\ge 0 \quad &\text{in }	B_\rho(x_0).\\
			\end{cases}$$ and $h\in C^{1,\gamma}_{loc}(B_\rho(x_0))$.
   Now we prove that $h=\overline w$ in $B_\rho(x_0)$, which gives $\overline w\in C^{1,\gamma}_{loc}(\Omega)$ and concludes the proof of \eqref{obstacleproblem}. Since $\{h_n\}\subset\mathcal V$, we have that $h\ge \overline w$ in $B_\rho(x_0)$. Suppose by contradiction that $$h(x)>\overline w(x)\quad\text{for some}\quad x\in B_\rho(x_0).$$ Then, by the continuity of $h$ and the upper semicontinuity of $\overline w$, we get $$h(y)>\overline w(y)\quad\text{for some}\quad y\in D\cap B_\rho(x_0),$$  which implies that $$h_n(y)>v_n(y)$$ for $n$ large enough, by \eqref{razionali}. Then we can find some $t>0$ such that $v_n+t$ touches from above $h_n$ at some point $z\in B_\rho(x_0)$. Since $h_n(z)=v_n(z)+t>0$, we have that 
   $$\F(D^2v_n,\nabla v_n,x)\le f\quad\text{and}\quad\F(D^2h_n,\nabla h_n,x)= f$$
   in a neighborhood of $z$.
   By the maximum principle, this implies that $v_n+t\equiv h_n$ in $B_\rho(x_0)$, which is a contradiction since $v_n=h_n$ on $\partial B_\rho(x_0)$.
   This concludes the proof of \eqref{obstacleproblem}. In order to prove \eqref{e:obstacleproblem-continuity}, we consider a sequence $x_n\in \Omega$ converging to some $x_0\in\overline\Omega$. If $x_0\in \Omega$, by the regularity of the solution to the obstacle problem, we get that $\overline w(x_n)\to \overline w(x_0)$. If $x_0\in\partial\Omega\cap B_1$, using the fact that 
   $$0\le \overline w(x_n)\le w(x_n)$$
   and the continuity of $w$, we get that $\lim_{n\to+\infty}\overline w(x_n)=0=w(x_0)$. Finally, suppose that $x_0\in\partial B_1$. Then, by the continuity of $w$, we have 
   $$\lim_{n\to+\infty}\overline w(x_n)\le \lim_{n\to+\infty} w(x_n)=w(x_0),$$
   and, using a standard barrier argument from \cite[Lemma 6.5]{hanlin} or \cite[Theorem 4.16]{xavixavi} (see also \cref{prop:attacca} below) with the barrier from \cref{lemma:barrier2}, we get that the above inequality is an equality. This concludes the proof of \eqref{e:obstacleproblem-continuity}.

  We extend $\overline w=0$ outside $\overline \Omega$ and we observe that \eqref{eq:attacca-bordo} holds. Now we prove \eqref{e:inequality-for-overline-w}.
  The second inequality follows by the definition of $\overline w$. In order to prove the first inequality, we notice that since $w\ge \underline u$ and $\overline w\ge 0$, we only need to prove the first inequality in $\Omega=\Omega_w\cap B_1$. Suppose by contradiction that there is some $y\in \Omega$ such that $\underline u(y)>\overline w(y)$. Then, there is some $t>0$ and some $z\in \Omega$ such that $\overline w+t$ touches $\underline u$ from above at $z$. Since $\underline u(z)=\overline w(z)+t>0$, then  $$\F(D^2\underline u,\nabla \underline u,x)\ge f\quad\text{and}\quad\F(D^2\overline w,\nabla \overline w,x)\le f$$ in a neighborhood of $z$, which is a contradiction since $\overline w\ge \underline u$ on $\partial \Omega$.
  
  We finally prove that $\overline w\in\mathcal{A}$, i.e.~$\overline w$ satisfies the condition $iii)$ of \cref{def:a-supersolution}.  
   Let $x_0\in \partial\Omega_{\overline w}\cap B_1$, then:
   \begin{itemize}
       \item if $x_0\in \partial\Omega_{ w}$, then either there exists $\widetilde \varphi\in C^1(B_1)$ as in \cref{def:a-supersolution} or $w=o(|x-x_0|)$ as $x\to x_0$; since $\overline w\le w$, in the first case, we use that $\widetilde\varphi_+$ touches $\overline w$ from above at $x_0$, while in the second case $\overline w=o(|x-x_0|)$ as $x\to x_0$;
    \item if $x_0\in\Omega_w$, then $|\nabla \overline w(x_0)|=0$ by the $C^{1,\gamma}_{loc}$-regularity of $\overline w$; 
    thus, $\overline w=o(|x-x_0|)$.\qedhere
   \end{itemize}
        \end{proof}
		
\begin{remark}[Definition of $\mathcal A^+$]\label{remark:a} 
  We denote by $\mathcal{A}^+\subset \mathcal A$ the family of supersolutions containing the functions $\overline w\in\mathcal A$ which satisfies the interior condition \eqref{e:equation-interior-for-overline-w}. By \cref{lemma:ex1}, we can replace a supersolution $w\in\mathcal{A}_{\underline u,\phi}$ by a function $\overline w\in\mathcal{A}_{\underline u,\phi}\cap\mathcal{A}^+$, then \bea u(x)&:=\inf\{w(x):w\in\mathcal{A}_{\underline u,\phi}\}=\inf\{\overline w(x):\overline w\in\mathcal{ A}_{\underline u,\phi}\cap\mathcal{A}^+\},
  \eea where $\mathcal{A}_{\underline u,\phi}$ is the family from \eqref{def:a-u-phi} and $u$ is the function defined in \eqref{e:def-viscosity-solution-u}.
		\end{remark} In the following proposition, we prove the Lipschitz regularity for the supersolutions in the class $\mathcal{A}^+$.
		
		\begin{proposition}\label{lemma:ex2} Suppose that $\F\in\EE$, $f\in C^0( \overline B_1)$ and $g\in {C^0}(\overline B_1,\mathbb{S}^{d-1})$ satisfies the hypothesis \ref{hyp:h1}. Let $\mathcal{A}^+$ be the family of supersolutions in \cref{remark:a}, then, for every $\rho\in(0,1)$, we have $$\|\nabla w\|_{L^\infty(B_{\rho})}\le L\quad\text{for every}\quad w\in\mathcal{A}^+,$$ for some universal constant $L>0$, which also depends on $\rho$. 
		\end{proposition}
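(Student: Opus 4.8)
The plan is to establish a uniform Lipschitz bound for every $w\in\mathcal{A}^+$ on compact subsets by combining the interior Harnack inequality \cref{prop:interiorharnack} with a comparison argument against the barrier $\zeta=\delta\psi$ constructed in \cref{lemma:barrier1}. The key geometric point is the following dichotomy for a point $x_0\in\Omega_w\cap B_\rho$: either $w$ is comparatively large on $\partial B_{d(x_0)}(x_0)$, where $d(x_0):=\operatorname{dist}(x_0,\partial\Omega_w)\wedge(1-\rho)$, in which case interior Harnack gives a gradient bound; or $w$ is small there, and then one wants to conclude that $x_0$ is actually far from the free boundary is impossible, so $w$ must be small near $x_0$ as well. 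More precisely, I would first reduce to showing the one-sided bound $w(x)\le C\,\operatorname{dist}(x,\partial\Omega_w)$ for $x\in B_\rho$ (with $C$ universal, depending on $\rho$), since then combining this linear growth with the interior $C^{1,\gamma}$-estimates for the equation $\F(D^2w,\nabla w,x)=f$ (which holds in $\Omega_w$ for $w\in\mathcal{A}^+$) yields the Lipschitz bound $\|\nabla w\|_{L^\infty(B_\rho)}\le L$ by a standard rescaling argument: in the ball $B_{d(x_0)/2}(x_0)\subset\Omega_w$ one has $0\le w\le C\,d(x_0)$, so the rescaled function $w(x_0+d(x_0)x)/d(x_0)$ is a solution of a rescaled equation with universally bounded right-hand side, bounded by $C$, and interior estimates bound its gradient at the center universally.

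To prove the linear growth bound, I argue by contradiction: suppose there is $w\in\mathcal{A}^+$ and $x_0\in\Omega_w\cap B_\rho$ with $w(x_0)\ge M\,d(x_0)$ for $M$ large to be determined. Let $y_0\in\partial\Omega_w$ realize the distance (if $d(x_0)$ is cut off by $1-\rho$, a direct interior-Harnack/boundary-barrier argument on $B_{1-\rho}(x_0)$ handles it, using that $w\in\mathcal{A}_{\underline u,\phi}$ is controlled on larger balls). Apply the interior Harnack inequality along a Harnack chain from $x_0$ toward $y_0$ inside $\Omega_w$: on the ball $B_{d(x_0)/2}(x_0)$ one gets $\inf_{B_{d(x_0)/2}(x_0)}w\ge c\,w(x_0)-C\|f\|\ge (cM-C\|f\|)d(x_0)$, which for $M$ large is at least $\tfrac{cM}{2}d(x_0)$. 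Now place the barrier $\zeta=\delta\psi$ from \cref{lemma:barrier1}, rescaled to the ball $B_{d(x_0)}(z)$ for a suitable center $z$ on the segment $[x_0,y_0]$ with $B_{d(x_0)/2}(z)\ni x_0$ roughly, choosing $\delta=\tfrac{cM}{2}d(x_0)$ so that $\zeta\le w$ on $\overline{B_{d(x_0)/2}(z)}$ and $\zeta=0$ on $\partial B_{d(x_0)}(z)\ni y_0$; here we also use $\sigma(\delta)\ge\|f\|$ once $M$ is large, so that $\zeta$ is a strict subsolution $\F_{d(x_0)}(D^2\zeta,\nabla\zeta,x)>\sigma\ge f$ on the annulus, while $w$ is a supersolution there. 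The comparison principle then forces $w\ge\zeta$ on the whole annulus, hence $w(y_0)\ge\zeta(y_0)=0$ with strict inequality on the gradient: $|\nabla\zeta(y_0)|\ne 0$ and $\zeta_+$ touches... — rather, the point is that $w\ge\zeta$ near $y_0$ with $\zeta$ having a nonzero inward gradient gives that $w$ grows linearly at $y_0$, so $y_0$ cannot satisfy condition $b)$ of \cref{def:a-supersolution}; and if $y_0$ satisfies $a)$, the test function $\widetilde\varphi$ with $\widetilde\varphi_+$ touching $w$ from above at $y_0$ must satisfy $|\nabla\widetilde\varphi(y_0)|\ge|\nabla\zeta(y_0)|$, which for $M$ large exceeds $\|g\|_{L^\infty}$, contradicting $|\nabla\widetilde\varphi(y_0)|<g(y_0,\cdot)$.

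The main obstacle is the careful bookkeeping in the contradiction step: one must choose the barrier's center and radius so that the annulus where $\zeta$ is a strict subsolution sits inside $\Omega_w$ and reaches the free boundary point $y_0$, while the "core" ball where $\zeta=\delta$ sits inside the region where interior Harnack has already propagated a lower bound $\gtrsim M\,d(x_0)$ for $w$; getting the scaling of $\delta$ versus $d(x_0)$ consistent, and ensuring that $\sigma(\delta)$ dominates $\|f\|_{L^\infty}$ and that the resulting gradient lower bound at $y_0$ beats $\|g\|_{L^\infty}$, all simultaneously, is where all the constants interact. A clean way to organize this is to first rescale so that $d(x_0)=1$ (replacing $w$ by $w_{x_0',1}$-type rescalings and $\F$ by $\F_{x_0',d(x_0)}$, which is why \cref{lemma:barrier1} is stated for the rescaled operators $\F_r$ with $r\in(0,1]$), reducing everything to a fixed-scale statement: a supersolution $w\ge 0$ on $B_2$ with $w(0)\ge M$ and $\operatorname{dist}(0,\partial\Omega_w)=1$ cannot satisfy condition $iii)$ of \cref{def:a-supersolution} at the nearest free boundary point once $M$ is universal-large. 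Everything else is then routine application of \cref{prop:interiorharnack}, \cref{lemma:barrier1}, the comparison principle, and the interior $C^{1,\gamma}$ estimates.
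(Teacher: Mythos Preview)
Your proposal is correct and follows essentially the same approach as the paper: rescale so that $d_0:=\operatorname{dist}(x_0,\partial\Omega_w)=1$, use interior Harnack to get $w\gtrsim M$ on $B_{1/2}$, compare with the radial subsolution $\zeta=\delta\psi$ of \cref{lemma:barrier1} on $B_1\setminus\overline{B_{1/2}}$, and derive a contradiction at the nearest free boundary point $y_0\in\partial B_1$ when $M$ is large. Two minor simplifications in the paper's version: the barrier is centered at $x_0$ (so $y_0\in\partial B_1$ automatically, avoiding your discussion of a shifted center $z$), and the contradiction is obtained directly from the supersolution free boundary condition of \cref{def:def-sol} (via \cref{rem:supersol}) rather than by splitting into cases $a)$/$b)$ of \cref{def:a-supersolution}.
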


		\begin{proof}
			For simplicity, we set $\rho=1/2$. Let $x_0\in\Omega_w\cap B_{1/2}$ and $d_0:=\text{\rm dist}(x_0,\partial\Omega_w)\le\overline d_0$, for some $\overline d_0>0$ small to be chosen. We consider the rescaling $$w_{d_0}(x):=\frac{w(x_0+rx)}{d_0}.$$ Then $w_{d_0}$ is a solution in the interior \eqref{e:equation-interior-for-overline-w} and satisfies the supersolution condition in \cref{def:def-sol} on the free boundary, with operator $\F_{d_0}:=\F_{x_0,d_0}$, right-hand side $f_{d_0}:=f_{x_0,d_0}$, free boundary condition $g_{d_0}:=g_{x_0,d_0}$, as in \eqref{def:rescaling-operator}.
			We need to prove that $w_{d_0}(0)\le M$ for some constant $M>0$, which concludes the proof by classical estimates for fully nonlinear operators (see e.g. \cite{caffarellicabre}) and by Harnack inequality in \cref{prop:interiorharnack}. 
            
            We suppose by contradiction that $w_{d_0}(0)>M$ for some constant $M>0$ to be chosen. 
			By Harnack inequality in \cref{prop:interiorharnack}, there are constants $c>0$ and $C>0$ such that 
			\bea w_{d_0}&\ge 2c w_{d_0}(0)-C\|f_{d_0}\|_{L^\infty(B_1)}\ge2c w_{d_0}(0)-C\|f\|_{L^\infty(B_1)}\ge
			cw_{d_0}(0)=:\delta\quad\text{in } B_{1/2},\eea if we choose a universal $M>0$ large enough.
			Therefore, using the notations of \cref{lemma:barrier1}, if we choose $\overline d_0$ so that $\|f_{d_0}\|_{L^\infty(B_1)}\le\sigma=\sigma(\delta)$, then $\zeta:=\delta\psi$ is a subsolution to \eqref{def:def-viscosity-solution} with operator $\F_{d_0}$, right-hand side $f_{d_0}$, free boundary condition $g_{d_0}$. In particular
			$$w_{d_0}\ge \zeta\quad\text{in } B_1\setminus \overline B_{1/2}.$$
            Moreover
			$$|\nabla \zeta|\ge C_{M}>0\quad\text{in }B_1\setminus \overline B_{1/2},$$ for some $C_{M}>0$, with $C_{M}\to+\infty$ as $M\to+\infty$.
			
			Let $z_0$ the point where $d_0$ is achieved and $y_0:=\frac{z_0-x_0}{d_0}\in\partial B_1$. Since $\zeta$ touches $w_{d_0}$ from below at $y_0$, we have 
			$$|\nabla \zeta(y_0)|\le g_{d_0}\left(y_0,\frac{\nabla \zeta(y_0)}{|\nabla \zeta(y_0)|}\right)\le \|g\|_{L^\infty},$$
			which is a contradiction if $M$ is chosen large enough.
		\end{proof}
		\begin{proof}[Proof of \cref{prop:lip-and-sol}]
			The proof follows directly from \cref{lemma:ex2} and \cref{remark:a}.
		\end{proof}
        \begin{remark}[Every solution to \eqref{def:def-viscosity-solution} is Lipschitz]\label{rem:every-sol-is-lip} Let $u:B_1\to\R$ be a non-negative continuous viscosity solution to \eqref{def:def-viscosity-solution}, even if not built using the Perron's method. The same argument in \cref{lemma:ex2} shows that $u$ is Lipschitz continuous and for every $\rho>0$ there is $L>0$ such that $$\|\nabla u\|_{L^\infty(B_{\rho})}\le L.$$  Indeed, in the proof of \cref{lemma:ex2} we only used that $w\in\mathcal{A}^+$ is a solution in the interior and a supersolution on the free boundary, according to \eqref{def:def-sol}.
        \end{remark}
		As a consequence of \cref{lemma:ex2}, we obtain the following corollary.
		\begin{corollary}\label{corollary:ex0}
			Suppose that $\F\in\EE$, $f\in C^0( \overline B_1)$ and $g\in {C^0}(\overline B_1,\mathbb{S}^{d-1})$ satisfies the hypothesis \ref{hyp:h1}. Let $\mathcal{A}^+$ and $\mathcal{A}_{\underline u,\phi}$ be the families of supersolutions in \cref{remark:a} and \eqref{def:a-u-phi} respectively and let $u:\overline B_1\to\R$ be the function from \cref{t:existence}, defined in \eqref{e:def-viscosity-solution-u}. Then, for every compact subset $K\subset\subset B_1$, there is a sequence $\{w_k\}\subset\mathcal{A}_{\underline u,\phi}\cap \mathcal{A}^+$ such that $w_k$ converges to $u$ uniformly in $K$. Moreover, for every compact set $K_0\subset\subset K$, with $K_0\subset B_1\setminus \overline\Omega_u$, then $w_k$ may be taken identically zero in $K_0$.
			\begin{proof}
				The first part follows by the theorem of Ascoli-Arzel\'a. For the second part, without loss of generality, we can suppose $K_0=\overline B_{r}(x_0)$ and the sequence $\{w_{k}\}\subset\mathcal{A}_{\underline u,\phi}\cap\mathcal{A}^+ $ converges to $u$ in $K\supset \overline B_{2r}(x_0) $, with $\overline B_{2r}(x_0)\subset B_1\setminus\overline\Omega_u$. Let $$w_{k,r}(x):=\frac{w_k(x_0+rx)}{r},$$ then $w_{k,r}$ converge to $0$ in $\overline B_2$. Using the notation of \cref{lemma:barrier2}, if we choose $\delta>0$ small enough and $r>0$ small enough, there exists a function $\widetilde \zeta:=\delta\widetilde \psi$, such that $\widetilde \zeta$ is a supersolution to \eqref{def:def-viscosity-solution} in $B_2$, with operator $\F_{r}:=\F_{x_0,r}$, right-hand side $f_{r}:=f_{x_0,r}$, free boundary condition $g_{r}:=g_{x_0,r}$ as in \eqref{def:rescaling-operator}. Moreover $\widetilde \zeta\equiv\delta$ on $\partial B_2$ and $\widetilde \zeta\equiv0$ in $B_{1}$. Since for $k$ large enough $w_{k,r}<\widetilde \zeta$ on $\partial B_2$, then we can replace the sequence $w_k$ with $$ \widetilde w_k(x):=
				\begin{cases}
					\min\{ w_k(x),r\widetilde \zeta(\frac{x-x_0}{r})\}\quad&\text{if } x\in B_{2r}(x_0)\\
					w_k(x)\quad&\text{if } x\in \overline B_1\setminus \overline B_{2r}(x_0)
				\end{cases}$$ which converge to $u$ in $K$ and $\widetilde w_k\equiv0$ in $\overline B_{r}(x_0)$. Moreover $\widetilde w_k$ belong to the family $\mathcal{A}_{\underline u}^\phi$, by \cref{rem:minimum} and since $\underline u\equiv 0$ in $B_{2r}(x_0)$. Finally, we can apply \cref{lemma:ex1} to $\widetilde w_k$ to get a replacement which satisfies \eqref{e:equation-interior-for-overline-w}, as desired.
			\end{proof}
		\end{corollary}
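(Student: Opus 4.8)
The plan is to obtain the first assertion from the representation of $u$ as a pointwise infimum together with the equi-Lipschitz bound of \cref{lemma:ex2}, and the second one by a localized surgery based on the radial supersolution of \cref{lemma:barrier2}.

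\textbf{First part.} By \cref{remark:a}, $u(x)=\inf\{\overline w(x):\overline w\in\mathcal{A}_{\underline u,\phi}\cap\mathcal{A}^+\}$, so fixing a countable dense set $D\subset K$ and running a diagonal extraction produces a sequence $\{w_k\}\subset\mathcal{A}_{\underline u,\phi}\cap\mathcal{A}^+$ with $w_k\to u$ on $D$ (replacing $w_k$ by $\min\{w_1,\dots,w_k\}$, which stays in $\mathcal{A}_{\underline u,\phi}$ by \cref{rem:minimum} and can be pushed back into $\mathcal{A}^+$ by \cref{lemma:ex1}, one may even take it non-increasing, though this is not needed). By \cref{lemma:ex2} the functions of $\mathcal{A}^+$ are equi-Lipschitz on every $B_\rho$ with $\rho<1$, hence on $K$, and therefore equibounded there (use the pointwise bound at one point of $D$ and the Lipschitz estimate). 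Ascoli-Arzel\'a then yields a subsequence converging uniformly on $K$, and the limit, being continuous and equal to $u$ on the dense set $D$, coincides with $u$ on $K$.

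\textbf{Second part.} After a finite covering it is enough to treat $K_0=\overline B_r(x_0)$ with $\overline B_{2r}(x_0)\subset (B_1\setminus\overline\Omega_u)\cap K$; then $w_k\to u\equiv 0$ uniformly on $\overline B_{2r}(x_0)$, so after rescaling $w_{k,r}(x):=w_k(x_0+rx)/r\to 0$ on $\overline B_2$. For $\delta$ and $r$ small, \cref{lemma:barrier2} applied to the rescaled data $\F_r,f_r,g_r$ of \eqref{def:rescaling-operator} — using that $\|f_r\|_{L^\infty}=O(r)\le\sigma(\delta)$ and that the (small) gradient of $\delta\widetilde\psi$ stays below $\gamma_0\le g$ — provides an admissible supersolution $\widetilde\zeta=\delta\widetilde\psi$ on $B_2$ with $\widetilde\zeta\equiv\delta$ on $\partial B_2$ and $\widetilde\zeta\equiv 0$ on $B_1$. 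For $k$ large $w_{k,r}<\widetilde\zeta$ on $\partial B_2$, i.e.\ $w_k<r\widetilde\zeta((\cdot-x_0)/r)$ on $\partial B_{2r}(x_0)$, so gluing $\widetilde w_k:=\min\{w_k,\,r\widetilde\zeta((\cdot-x_0)/r)\}$ on $B_{2r}(x_0)$ and $\widetilde w_k:=w_k$ elsewhere gives a continuous function that still lies in $\mathcal{A}_{\underline u,\phi}$ — it is a minimum of supersolutions as in \cref{rem:minimum}, it is $\ge\phi$ on $\partial B_1$ because $\overline B_{2r}(x_0)\subset B_1$, and it is $\ge\underline u$ because $\underline u\le u\equiv 0$ on $B_{2r}(x_0)$ — and which vanishes identically on $\overline B_r(x_0)=K_0$. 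A final application of \cref{lemma:ex1} replaces $\widetilde w_k$ by $\overline{\widetilde w_k}\in\mathcal{A}_{\underline u,\phi}\cap\mathcal{A}^+$ with $u\le\overline{\widetilde w_k}\le\widetilde w_k$, so that $\overline{\widetilde w_k}\to u$ uniformly on $K$ by squeezing while $\overline{\widetilde w_k}\equiv 0$ on $K_0$.

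\textbf{Main obstacle.} The delicate point is to make sure the glued and rescaled $\widetilde w_k$ genuinely belongs to $\mathcal{A}$: one must verify $iii)$ of \cref{def:a-supersolution} at the free-boundary points of $\widetilde w_k$ created on $\partial B_r(x_0)$, where the relevant test function is the smooth extension of $\widetilde\zeta$ across its zero set — whose gradient is $O(\delta)$ and thus $<\gamma_0\le g$, so condition $a)$ holds — and at the points of $\partial\Omega_{w_k}$ inside $B_{2r}(x_0)$, where the touching function or the $o(|x-x_0|)$ vanishing of $w_k$ is inherited by the minimum. The rest is bookkeeping to ensure that the last \cref{lemma:ex1}-replacement preserves both the uniform convergence on $K$ and the vanishing on $K_0$.
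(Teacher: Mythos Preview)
Your proof is correct and follows essentially the same route as the paper's: Ascoli--Arzel\`a (via the equi-Lipschitz bound of \cref{lemma:ex2} and the representation in \cref{remark:a}) for the first part, and the barrier from \cref{lemma:barrier2} glued by minimum, followed by the replacement of \cref{lemma:ex1}, for the second. Your write-up is simply more explicit than the paper's---in particular the diagonal-extraction step and the verification of condition $iii)$ of \cref{def:a-supersolution} at the new free-boundary points on $\partial B_r(x_0)$, which the paper absorbs into a citation of \cref{rem:minimum}.
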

	\subsection{Step 2. $u$ is a supersolution on the free boundary} In this subsection we prove that $u$ is a supersolution to \eqref{def:def-viscosity-solution}. In particular, we prove the following proposition.
			\begin{proposition}\label{prop:supersol}
				Suppose that $\F\in\EE$, $f\in C^0( \overline B_1)$ and $g\in {C^0}(\overline B_1,\mathbb{S}^{d-1})$ satisfies the hypothesis \ref{hyp:h1}. 
                Let $u:\overline B_1\to\R$ be the function from \cref{t:existence}, defined in \eqref{e:def-viscosity-solution-u}, then $u$ satisfies the supersolution condition on the free boundary, according to \cref{def:def-sol}.
			\end{proposition}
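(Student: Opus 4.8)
The plan is to show that $u = \inf\{w : w \in \mathcal{A}_{\underline u,\phi}\}$ satisfies the supersolution condition on the free boundary directly from the definition, exploiting the fact that $u$ is a decreasing limit of admissible supersolutions. By \Cref{rem:supersol}, each $w \in \mathcal{A}$ is a supersolution to \eqref{def:def-viscosity-solution}, so the content to be proved is precisely that this property passes to the infimum. Fix $x_0 \in \partial\Omega_u \cap B_1$ and let $\varphi \in C^1(B_1)$ be such that $\varphi$ touches $u$ from below at $x_0$ with $|\nabla\varphi(x_0)| \neq 0$. I must show $|\nabla\varphi(x_0)| \le g(x_0, \nabla\varphi(x_0)/|\nabla\varphi(x_0)|)$. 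By \Cref{prop:lip-and-sol} the function $u$ already satisfies the interior equation, so the only point at stake is the behaviour near the free boundary.

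First I would reduce to a clean touching situation. Since touching from below is a one-sided condition, after subtracting a quadratic bump one may assume $\varphi$ touches $u$ strictly from below at $x_0$ in a neighborhood $B_r(x_0) \subset B_1$, and it is enough to treat the case where $x_0$ is an interior point of this ball; replacing $\varphi$ by $\varphi - \eta|x-x_0|^2$ for small $\eta>0$ only decreases $|\nabla\varphi(x_0)|$ by nothing (the gradient at $x_0$ is unchanged) and strengthens the strictness. Next, by \Cref{corollary:ex0} applied to a compact neighborhood $K \supset \overline{B_r(x_0)}$, there is a sequence $\{w_k\} \subset \mathcal{A}_{\underline u,\phi} \cap \mathcal{A}^+$ with $w_k \to u$ uniformly on $K$. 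The key topological step is then: since $\varphi$ touches $u$ strictly from below at $x_0$ and $x_0 \in \partial\Omega_u$, for $k$ large the function $\varphi$ (suitably shifted downward by a small constant $t_k \to 0^+$, i.e. $\varphi - t_k$) is touched from above by $w_k$ at some point $x_k \in B_r(x_0)$ with $x_k \to x_0$, and — crucially — one can arrange $x_k \in \partial\Omega_{w_k}$. Indeed, because $x_0$ is a free boundary point of the limit and the $w_k$ converge uniformly, the positivity sets $\Omega_{w_k}$ cannot eventually contain a fixed ball around $x_0$; so the downward-shifted $\varphi$ must first make contact with $w_k$ at the edge of $\Omega_{w_k}$ rather than in its interior (if contact occurred at an interior point of $\Omega_{w_k}$ with $w_k>0$ there, the interior equation plus the maximum principle, exactly as in the argument of \Cref{lemma:ex1}, would force $w_k \equiv \varphi - t_k$ locally, contradicting either the free boundary of $w_k$ or the fact that $w_k \ge \phi$ on $\partial B_1$). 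Getting the contact point genuinely onto $\partial\Omega_{w_k}$, and controlling that $\nabla\varphi(x_k) \to \nabla\varphi(x_0) \neq 0$ (so that the test function has nonvanishing gradient at $x_k$ and one is in case $a)$ rather than case $b)$ of \Cref{def:a-supersolution}), is the main obstacle; it is a compactness-plus-maximum-principle argument of the type standard in Perron constructions, but it requires care because of the dichotomy in \Cref{def:a-supersolution}.

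Once $x_k \in \partial\Omega_{w_k} \cap B_1$ is secured with $\varphi - t_k$ touching $w_k$ from below there, the supersolution property of $w_k$ from \Cref{rem:supersol} gives
\[
|\nabla\varphi(x_k)| \le g\!\left(x_k, \frac{\nabla\varphi(x_k)}{|\nabla\varphi(x_k)|}\right).
\]
(One should note that what \Cref{def:a-supersolution} provides at $x_k$ is a test function $\widetilde\varphi$ with $\widetilde\varphi_+$ touching $w_k$ from above and small gradient; the bridging between $\varphi$ touching $w_k$ from below and such $\widetilde\varphi$ is exactly the argument carried out in the proof of \Cref{rem:supersol}, which shows $\nabla\widetilde\varphi(x_k) = \nabla\varphi(x_k)$ and hence transfers the inequality.) Finally, letting $k \to \infty$ and using $x_k \to x_0$, $\nabla\varphi(x_k) \to \nabla\varphi(x_0)$, together with the continuity of $g$ on $\overline B_1 \times \mathbb{S}^{d-1}$, yields
\[
|\nabla\varphi(x_0)| \le g\!\left(x_0, \frac{\nabla\varphi(x_0)}{|\nabla\varphi(x_0)|}\right),
\]
which is the desired supersolution condition. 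Combined with \Cref{prop:lip-and-sol} for the interior equation, this proves that $u$ is a supersolution to \eqref{def:def-viscosity-solution} in the sense of \Cref{def:def-sol}.
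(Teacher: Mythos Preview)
Your overall architecture matches the paper's: reduce to a strict touching situation, approximate $u$ by a sequence $w_k\in\mathcal A_{\underline u,\phi}\cap\mathcal A^+$ via \cref{corollary:ex0}, produce shifted contact points $x_k$ between $\varphi+c_k$ and $w_k$, and read off the free boundary inequality from the supersolution property of $w_k$. The difference is that the paper argues by contradiction, and this is not a cosmetic choice---it is precisely what closes the gap in your argument.

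The gap is in your claim that the contact point $x_k$ can be forced onto $\partial\Omega_{w_k}$. You write that if contact occurred at an interior point of $\Omega_{w_k}$, ``the interior equation plus the maximum principle\dots would force $w_k\equiv\varphi-t_k$ locally''. But that maximum-principle step requires $\varphi$ to be a (strict) subsolution of $\F(D^2\cdot,\nabla\cdot,x)=f$, and you have not arranged this. Your only modification of $\varphi$ is subtracting $\eta|x-x_0|^2$, which leaves $\nabla\varphi(x_0)$ unchanged but pushes $D^2\varphi$ in the \emph{negative} direction, making $\varphi$ more of a supersolution, not a subsolution. With $\varphi$ an arbitrary $C^1$ test function, nothing prevents $x_k\in\Omega_{w_k}$, and then the free boundary condition of $w_k$ gives you no information at $x_k$.

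The paper fixes this by assuming, for contradiction, that $|\nabla\varphi(x_0)|>g(x_0,\nu)$ strictly. That strict inequality provides slack: one can replace $\varphi$ by a new test function with the same (or slightly smaller) gradient direction at $x_0$, still satisfying $|\nabla\varphi|>g$ in a neighborhood $B_\rho(x_0)$, and in addition satisfying $\F(D^2\varphi,\nabla\varphi,x)>f$ in $B_\rho(x_0)$. With both strict inequalities in hand, the contact point $x_k$ yields a contradiction in either case: if $x_k\in\Omega_{w_k}$, the interior supersolution property of $w_k$ forces $\F(D^2\varphi(x_k),\nabla\varphi(x_k),x_k)\le f(x_k)$, contradicting the interior strict inequality; if $x_k\in\partial\Omega_{w_k}$, the boundary supersolution property of $w_k$ (via \cref{rem:supersol}) forces $|\nabla\varphi(x_k)|\le g(x_k,\cdot)$, contradicting the boundary strict inequality. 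No limit $k\to\infty$ is needed, and one never has to pin down which case occurs. In your direct approach there is no such slack to build an interior subsolution while preserving $\nabla\varphi(x_0)$ exactly, so the argument does not close as written.
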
	
        \begin{proof}
                Let us suppose by contradiction that there is a function $\varphi\in C^1(B_1)$ such that $\varphi$ touches $u$ from below at $x_0\in\partial\Omega_u\cap B_1$ and $$|\nabla \varphi(x_0)|> g\left(x_0,\frac{\nabla \varphi(x_0)}{|\nabla \varphi(x_0)|}\right).$$ 
				By continuity, we can suppose \be\label{eq:supersol-cond}|\nabla \varphi(x)|> g\left(x,\frac{\nabla \varphi(x)}{|\nabla \varphi(x)|}\right)\quad\text{for every}\quad x\in B_\rho(x_0),\ee for some $\rho>0$.    
				Without loss of generality, we can suppose that $\varphi$ touches $u$ strictly from below in $B_\rho(x_0)$ and \be\label{eq:interior-cond}\F(D^2\varphi,\nabla \varphi,x)>f\quad\text{in }B_\rho(x_0)\ee 
				By \cref{corollary:ex0}, there is a sequence $\{w_k\}\subset\mathcal{A}$ that converges to $u$ in $\overline B_\rho(x_0)$. In particular, we can suppose that $w_{k}>\varphi+\delta$ on $\partial B_\rho(x_0)$ for $k$ large enough and for some $\delta>0$. This implies that there is $c_k\in\R$ with $|c_k|<\delta$ and such that $\varphi +c_k$ touches $w_{k}$ from below in a point $x_k\in B_\rho(x_0)$. But this is a contradiction with \eqref{eq:supersol-cond} and \eqref{eq:interior-cond}.
        \end{proof}
        \subsection{Step 3. $u$ is non-degenerate}
		In this subsection we prove the non-degeneracy property of the function $u$, which is the following proposition.
		\begin{proposition}\label{prop:non-and-sol} Suppose that $\F\in\EE$, $f\in C^0( \overline B_1)$ and $g\in {C^0}(\overline B_1,\mathbb{S}^{d-1})$ satisfies the hypothesis \ref{hyp:h1}.
        Let $u:\overline B_1\to\R$ be the function from \cref{t:existence}, defined in \eqref{e:def-viscosity-solution-u}, then $u$ is non-degenerate, namely, for every $\rho\in(0,1)$, we have
        $$\sup_{B_r(x)}u\ge cr\quad\text{for every} \quad x\in \overline\Omega_u\cap B_{\rho},\quad r\in(0,r_0),$$ for some universal constants $c>0$ and $r_0>0$, which also depend on $\rho$.
		\end{proposition}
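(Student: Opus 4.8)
The plan is to argue by contradiction: suppose that for some $\rho\in(0,1)$ the non-degeneracy fails, so that there exist sequences $x_j\in\overline\Omega_u\cap B_\rho$ and $r_j\to0$ with $\sup_{B_{r_j}(x_j)}u< \tfrac{1}{j}\,r_j$ (equivalently $u$ is degenerate at some point). By a standard continuity/open-closed argument we may assume the bad point lies in $\Omega_u$; up to translating we set it to be $0$, so $u(0)>0$ but $\sup_{B_r}u = o(r)$ as $r\to0$ along a subsequence. First I would fix a small scale $r>0$ (to be chosen) at which $\sup_{B_r}u$ is much smaller than $\delta r$ for a $\delta$ coming from the radial supersolution \cref{lemma:barrier2}, and I would use \cref{corollary:ex0} to pick $w\in\mathcal A_{\underline u,\phi}$ with $w$ arbitrarily close to $u$ in $\overline B_r$ — in particular $\sup_{\partial B_r}w < \delta r$.

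The core construction is then: compose the barrier $\widetilde\zeta=\delta\widetilde\psi$ from \cref{lemma:barrier2} with the rescaling $x\mapsto (x)/r$, i.e. consider $r\widetilde\zeta(x/r)$, which is a supersolution to \eqref{def:def-viscosity-solution} in $B_r\setminus\overline B_{r/2}$ (with operator $\F_{0,r}$, right-hand side $f_{0,r}$, and free boundary condition $g_{0,r}$, after choosing $r$ small enough that $\|f_{0,r}\|_{L^\infty}\le\sigma(\delta)$), equals $\delta r$ on $\partial B_r$, and vanishes on $B_{r/2}$. Since $w<\delta r$ on $\partial B_r$, the glued function
$$
\widetilde w(x):=\begin{cases}\min\{w(x),\,r\widetilde\zeta(x/r)\}&\text{in }B_r,\\ w(x)&\text{in }\overline B_1\setminus B_r,\end{cases}
$$
is still an admissible supersolution in $\mathcal A_{\underline u,\phi}$: interior supersolution by \cref{rem:minimum} and the barrier property, the dichotomy $iii)$ of \cref{def:a-supersolution} inherited on $\partial\Omega_{\widetilde w}\cap B_1$, and $\widetilde w\ge\underline u$ because (by \cref{lemma:maximum}) the free boundary of $\underline u$ cannot reach $0$, so $\underline u\equiv0$ in a neighborhood of $0$ and hence $\underline u\le r\widetilde\zeta(\cdot/r)$ there. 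Finally I would check $\widetilde w=\phi$ on $\partial B_1$ (unchanged) and $\widetilde w\ge\phi$ on $\partial B_1$, so $\widetilde w\in\mathcal A_{\underline u,\phi}$. But $\widetilde w(0)=r\widetilde\zeta(0)=0$, whereas $u(0)>0$ and $u\le\widetilde w$ by definition of the infimum \eqref{e:def-viscosity-solution-u} — contradiction.

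The main obstacle I anticipate is the verification that $\widetilde w$ genuinely belongs to $\mathcal A$, specifically condition $iii)$ at free boundary points of $\widetilde w$ that are created by the gluing (points in $\overline{B_{r/2}}$ or on the "seam"): on $B_{r/2}$ one has $\widetilde w\equiv0$, so $\widetilde w=o(|x-x_0|)$ and alternative $b)$ holds trivially; on the annulus $\{r/2<|x|<r\}$, wherever $\widetilde w=r\widetilde\zeta(\cdot/r)$ there is no free boundary since $\widetilde\zeta>0$ there, and wherever $\widetilde w=w$ one uses the dichotomy already satisfied by $w$; the only subtle case is a free boundary point on $\partial B_{r/2}$, where $\widetilde\zeta=0$ and again $\widetilde w=o(|x-x_0|)$ because $\widetilde\zeta$ vanishes to first order there (it is identically zero inside). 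The other point requiring care is making the choice of $r$ uniform: one needs $r$ small enough simultaneously for the smallness of $\|f_{0,r}\|_{L^\infty}$ and for $\sup_{B_r}u<\delta r$, but the degeneracy assumption provides the latter along a sequence $r_j\to0$, so any sufficiently small $r_j$ works; the resulting constants $c$ and $r_0$ are universal (depending on $\rho$) through $\delta$ and $\sigma(\delta)$ exactly as in \cref{lemma:ex2}.
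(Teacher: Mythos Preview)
Your barrier-gluing construction is essentially the same as the paper's, but the contradiction setup has a genuine gap. The claim ``$u(0)>0$ but $\sup_{B_r}u=o(r)$ as $r\to0$ along a subsequence'' is impossible: if $u(0)>0$ then $\sup_{B_r}u/r\ge u(0)/r\to\infty$. In fact, if non-degeneracy fails along $(x_j,r_j)$ with $r_j\to0$, any subsequential limit $x^*$ of the $x_j$ must lie on $\partial\Omega_u$ (since $u(x_j)\le\sup_{B_{r_j}(x_j)}u\to0$), so no ``continuity/open-closed argument'' can place the bad point in $\Omega_u$. The paper avoids this by first reducing (citing \cite[Lemma~5.1]{DESILVA-perron}) to the equivalent statement $u(x_0)\ge c_0\,d_0$ for $x_0\in\Omega_u$ with $d_0:=\text{dist}(x_0,\partial\Omega_u)\le\overline d_0$; the contradiction hypothesis is then $u(x_0)<c_0 d_0$ at the \emph{fixed} scale $d_0$, and interior Harnack (applicable because $B_{d_0}(x_0)\subset\Omega_u$) propagates this to smallness of $u$, hence of $w_k$, on $B_{d_0/2}(x_0)$.

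There is a second gap in your verification that $\underline u\equiv0$ near $0$. \cref{lemma:maximum} only yields $\partial\Omega_{\underline u}\cap\partial\Omega_u\cap B_1=\emptyset$; it does not say $\underline u$ vanishes near an arbitrary point of $\Omega_u$, and in general it need not. The paper's choice of scale $d_0$ is again what makes this work: since $x_0$ is within $d_0$ of $\partial\Omega_u$, and $\partial\Omega_{\underline u}$ has positive distance from $\partial\Omega_u$ on compact subsets, one gets $\text{dist}(x_0,\Omega_{\underline u})\ge d_0/2$ once $d_0$ is small enough, hence $\underline u\equiv0$ in $B_{d_0/2}(x_0)$. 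Without tying the scale of the barrier to $d_0$, you cannot rule out $\underline u>0$ in the region where your glued competitor vanishes, and then $\widetilde w\ge\underline u$ fails.
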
        
            \begin{proof} For simplicity, we set $\rho=1/2$. By the argument in \cite[Lemma 5.1]{DESILVA-perron}, it is sufficient to prove that there are constants $c_0$ and $\overline d_0>0$ such that
				$$u(x_0)\ge c_0\,\text{\rm dist}(x_0,\partial\Omega_u)\quad\text{for every}\quad x_0\in \Omega_u\cap B_{1/2},$$ where $d_0:=\text{\rm dist}(x_0,\partial\Omega_u)\le\overline d_0.$ 
By \cref{prop:supersol}, $u$ is a supersolution to \eqref{def:def-viscosity-solution}, then we can use the strong maximum principle in \cref{lemma:maximum} to get $$\partial\Omega_{\underline u}\cap\partial\Omega_u\cap B_1=\emptyset.$$ In particular, we can take $\overline d_0>0$ small enough such that if $d_0\le \overline d_0$, then $$\text{\rm dist}(x_0, \Omega_{\underline u})\ge\frac{d_0}2,$$ 
            which implies that 
    \be\label{eq:lowboundund}\underline u\equiv0\quad\text{in }B_{d_0/2}(x_0).\ee 
				By \cref{corollary:ex0}, we can find a sequence $\{w_k\}\subset \mathcal{A}^+$ such that $w_k$ converges to $u$ in $\overline B_{3/4}$.
				We define $$ u_{d_0}(x):=\frac{u(x_0+d_0x)}{d_0} \quad\text{and}\quad  w_{k,d_0}(x):=\frac{w_k(x_0+d_0x)}{d_0}$$ and assume by contradiction that $ u_{d_0}(0)\le c_0$, for some constant $c_0$ to be chosen.
				Using the notations of \cref{lemma:barrier2}, we can find a function $\widetilde \zeta:=\delta\widetilde \psi$ which is a supersolution to \eqref{def:def-viscosity-solution} in $B_{1/2}$, with operator $\F_{d_0}:=\F_{x_0,d_0}$, right-hand side $f_{d_0}:=f_{x_0,d_0}$, free boundary condition $g_{d_0}:=g_{x_0,d_0}$ as in \eqref{def:rescaling-operator}, where $\delta>0$ is a universal constant. Moreover $\widetilde \zeta\equiv0$ in $ B_{1/4}$ and $\widetilde \zeta\equiv \delta$ on $\partial B_{1/2}$. 
				 We can apply Harnack inequality in \cref{prop:interiorharnack} to $ u_{d_0}$, then, for $k$ large enough, we get $$ w_{k,d_0}\le C(c_0+d_0\|f\|_{L^\infty})\quad\text{in }B_{1/2}.$$ Taking $c_0$ and $\overline d_0$ small enough, it follows that $ w_{k,d_0}< \widetilde \zeta$ on $\partial B_{1/2}$. If we set $$ \widetilde v_k:=
				\begin{cases}
					\min\{  w_{k,d_0},\widetilde \zeta\}\quad&\text{in } B_{1/2},\\
					 w_{k,d_0}\quad&\text{otherwise},\\
				\end{cases}$$ then $ v_k(x):=d_0\widetilde v_k(\frac{x-x_0}{d_0})$ belongs to the family $\mathcal{A}_{\underline u,\phi}$ in \eqref{def:a-u-phi}, by \cref{rem:minimum} and \eqref{eq:lowboundund}.
                This is a contradiction since $x_0\in\Omega_u$ and $v_k(0)=0$.
			\end{proof}
            \subsection{Step 4. $u$ is a subsolution on the free boundary} In this subsection we prove that $u$ is a subsolution to \eqref{def:def-viscosity-solution}. In particular, we prove the following proposition.
			\begin{proposition}\label{prop:subsol}
				Suppose that $\F\in\EE$, $f\in C^0( \overline B_1)$ and $g\in {C^0}(\overline B_1,\mathbb{S}^{d-1})$ satisfies the hypothesis \ref{hyp:h1}. 
                Let $u:\overline B_1\to\R$ be the function from \cref{t:existence}, defined in \eqref{e:def-viscosity-solution-u}, then $u$ satisfies the subsolution condition on the free boundary, according to \cref{def:def-sol}.
			\end{proposition}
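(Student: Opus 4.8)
The plan is to argue by contradiction, following the scheme of \cite{DESILVA-perron,salsatuloneverzini-existenceviscositysol}. Assume there are $x_0\in\partial\Omega_u\cap B_1$ and $\varphi\in C^1(B_1)$ such that $\varphi_+$ touches $u$ from above at $x_0$ while the subsolution condition of \cref{def:def-sol} fails; after a translation, $x_0=0$. If $\nabla\varphi(0)=0$, then $u(x)\le\varphi_+(x)=o(|x|)$ as $x\to0$, which contradicts the non-degeneracy of $u$ established in \cref{prop:non-and-sol}; so we may assume $\nabla\varphi(0)=a\nu$ with $a>0$, $\nu\in\mathbb S^{d-1}$ and $a<g(0,\nu)$, and we fix $b\in(a,g(0,\nu))$. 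After replacing $\varphi$ by $\varphi+\eps|x|^2$ with $\eps>0$ small (which changes neither $\varphi(0)$, nor $\nabla\varphi(0)$, nor the bound $u\le\varphi_+$), we may also suppose that $\varphi_+$ touches $u$ strictly from above on $\overline B_\rho\setminus\{0\}$, and that $|\nabla\varphi(x)|<b<g(x,\omega)$ for all $x\in B_\rho$ and all $\omega$ near $\nu$, provided $\rho$ is small. In particular $u\le\varphi_+$ forces $\Omega_u\cap B_\rho\subset\{\varphi>0\}$, while $u\equiv0$ on the open set $\{\varphi<0\}\cap B_\rho$.

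The target is a competitor $\overline w\in\mathcal A_{\underline u,\phi}$ vanishing in a neighborhood of $0$: since $u\le\overline w$ by definition of $u$, this gives $u\equiv0$ near $0$, contradicting $0\in\partial\Omega_u$. Three observations set this up. (i) As $u$ is a supersolution (\cref{prop:supersol}), $u\ge\underline u$, and $0\in\partial\Omega_u$, the strong maximum principle \cref{lemma:maximum} excludes $0\in\overline\Omega_{\underline u}$, so $\underline u\equiv0$ on some ball $B_{\delta_0}(0)$. (ii) By \cref{corollary:ex0} there is a sequence $\{w_k\}\subset\mathcal A_{\underline u,\phi}\cap\mathcal A^+$ converging to $u$ uniformly near $0$; and by the last statement of that corollary we may take $w_k\equiv0$ on the compact set $\{\varphi\le-\mu\}\cap\overline B_{2\rho}$, which is contained in $B_1\setminus\overline\Omega_u$, for a fixed small $\mu>0$. (iii) By \cref{rem:minimum} the minimum of admissible supersolutions is admissible, and the constraints $\ge\underline u$ and $\ge\phi$ on $\partial B_1$ pass to the minimum; hence it suffices to build a barrier $h$ that is itself an admissible supersolution for the data $\F_{0,\rho},f_{0,\rho},g_{0,\rho}$ rescaled at $0$ as in \eqref{def:rescaling-operator}, that vanishes near $0$, and that dominates $w_k$ on the surface along which we glue.

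The barrier $h$ should be a half-plane solution of slope $b$ in the direction $\nu$, translated so that its hyperplanar free boundary lies strictly on the $\{\varphi<0\}$ side of $0$, then corrected by a small concave term $-C_\rho|x|^2$. Since $[\F_{0,\rho}]_{\mathrm{Lip}}$ and $\|f_{0,\rho}\|_{L^\infty}$ are $O(\rho)$, a correction of size $O(\rho)$ suffices for $\F_{0,\rho}(D^2h,\nabla h,x)\le f_{0,\rho}$ in $\Omega_h$; and at each free boundary point $x_1$ of $h$ the function is touched from above by a linear function whose gradient has modulus $b+O(\rho)<g(x_1,\cdot)$ and direction $O(\rho)$-close to $\nu$, so condition $iii)a)$ of \cref{def:a-supersolution} holds. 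With $\rho$ small and then $k$ large, the slope gap $b>a$ together with the bound $u\le\varphi_+$ are what make $h\ge w_k$ on the gluing interface possible; then $\overline w:=\min\{w_k,h\}$ on a small ball around $0$ and $\overline w:=w_k$ outside is the sought element of $\mathcal A_{\underline u,\phi}$, and it vanishes near $0$. The step I expect to be the main obstacle is precisely this gluing: the half-plane has already collapsed to $0$ in a thin layer next to the ``would-be free boundary'' $\{\varphi=0\}$, where $u$ (hence $w_k$) can still be positive, so one must position the free boundary of $h$ and choose $\mu$ with care, exploiting the one-sided bound $u\le\varphi_+$ and the non-degeneracy of $u$; the exclusion of an interior contact between $h$ and $w_k$, where needed, rests on the strong maximum principle and on $w_k\in\mathcal A^+$ solving the interior equation in $\Omega_{w_k}$.
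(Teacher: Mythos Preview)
Your overall contradiction scheme and the preparatory observations (i)--(iii) match the paper's proof, and your use of non-degeneracy to dispose of the case $\nabla\varphi(0)=0$ is correct. The gap is precisely at the point you flag as the ``main obstacle'': a translated half-plane of slope $b$ with a small concave correction cannot simultaneously vanish near $0$ and dominate $w_k$ on the gluing surface. If the hyperplanar free boundary of $h$ sits on the $\{\varphi<0\}$ side of $0$ (as you first write), then $h(0)>0$ and $\min\{w_k,h\}$ need not vanish near $0$; if it sits on the $\{\varphi>0\}$ side (as your last paragraph seems to assume), then on $\partial B_\rho$ in the strip $\{0<x\cdot\nu<s\}$ one has $h=0$ while $w_k$ is close to $u\le\varphi_+$, which can be as large as $as>0$. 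The slope gap $b>a$ does not help here, since $b(x\cdot\nu-s)\ge a\,x\cdot\nu$ near $x\cdot\nu=s$ would force $-a\ge0$, and an interior maximum principle cannot repair a failure that occurs on the boundary of the gluing region. A concave correction only makes $h$ smaller.

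The missing idea is to \emph{bend} the free boundary of the barrier so that it lies on the $\{x\cdot\nu>0\}$ side near the origin (making the barrier vanish around $0$) but on the $\{x\cdot\nu<0\}$ side near the gluing surface (so that it still dominates $w_{k,r}$ there). The paper does this by working at scale $r$, taking $h(x)=\alpha(x\cdot\nu+\sigma)_+$ as boundary datum, and solving $\F_r(D^2v,\nabla v,x)=f_r$ in the deformed domain $D_\sigma=\{x\cdot\nu>-\sigma+2\sigma\,\omega(x)\}\cap B_1$, where $\omega$ is a cutoff equal to $1$ near $0$ and $0$ outside $B_{1/2}$, with $v=0$ on $B_1\setminus D_\sigma$ and $v=h$ on $\partial B_1$. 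Then $v\equiv0$ in a neighborhood of $0$ by construction, while $v=h\ge w_{k,r}$ on $\partial B_1$ by the observations you already have. The admissibility of $v$ on its (curved) free boundary follows from the $C^{1,\alpha}$ up-to-the-boundary estimate $\|v-h\|_{C^{1,\alpha}(D_\sigma)}\le C(\sigma+r)$, which keeps $\nabla v$ close to $\alpha\nu$ and hence $|\nabla v|<g_r$ for $\sigma,r$ small. This curved-domain construction together with the boundary Schauder estimate is the substantive step your outline lacks.
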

			\begin{proof}
				Let us suppose by contradiction that there is a function $\varphi\in C^1(B_1)$ such that $\varphi_+$ touches $u$ from above at $x_0\in\partial\Omega_u\cap B_1$, with $|\nabla \varphi(x_0)|\not=0$ and $$|\nabla \varphi(x_0)|< g\left(x_0,\frac{\nabla \varphi(x_0)}{|\nabla \varphi(x_0)|}\right).$$ 
				By continuity, we can suppose that \be\label{eq:subsol-cond}|\nabla \varphi(x)|< g\left(x,\frac{\nabla \varphi(x)}{|\nabla \varphi(x)|}\right)\quad\text{for every}\quad x\in B_\rho(x_0).\ee 
				We set $$\alpha:=|\nabla \varphi(x_0)|>0\quad\text{and}\quad \nu:=\frac{\nabla \varphi(x_0)}{|\nabla \varphi(x_0)|}.$$ 
    Let $$u_r(x):=\frac{u(x_0+rx)}{r},$$ then $\varphi_r^+ $ touches $u_r$ from above at $0$ and $\varphi_r^+$ converges to $\varphi_0(x):=\alpha (x\cdot\nu)_+$. Since $\varphi_+=(\alpha (x-x_0)\cdot\nu+o(|x-x_0|))_+$ as $x\to x_0$, for every $\sigma>0$, there is a constant $r_0>0$ such that $\varphi_r^+$, and thus $u_r$, vanishes in the cone 
    $$\left\{\frac{x}{|x|}\cdot \nu<-\frac{\sigma}2\right\}\cap \overline B_1$$
    for every $r\in(0,r_0)$. We also notice that 
    $$ u_r\le \varphi_r^+< \varphi_0(\cdot+\sigma\nu)=:h\quad\text{in }\left\{x\cdot \nu>-\frac34\sigma\right\}\cap \partial B_1$$ 
    for every $r\in(0,r_0).$
    Moreover, by \cref{corollary:ex0}, there is a sequence $\{w_k\}\subset\mathcal{A}^+$ such that $w_k$ converges to $u$ in $\overline B_\rho(x_0)$ and for the corresponding rescalings $$w_{k,r}(x):=\frac{w_k(x_0+rx)}{r},$$ we have 
    $$w_{k,r}\equiv0\quad\text{in }\left\{x\cdot \nu\le-\frac34\sigma\right\}\cap \partial B_1.$$ 
    In particular, if $r_0$ is small enough and $k$ is large enough, we have \be\label{e:min}w_{k,r}\le h\quad\text{on }\partial B_1.\ee
				Let $D_\sigma:=\{x\cdot\nu>-\sigma+2\sigma\omega(x)\}\cap B_1 $ and $v:\overline B_1\to\R$ such that $$ \begin{cases}
					\F_r(D^2 v,\nabla v,x)=f_r\quad&\text{in }D_\sigma, \\
					v=0\quad&\text{in } B_1\setminus D_\sigma,\\
					v=h\quad&\text{on } \partial B_1,
				\end{cases}$$ 
                for some $r>0$ to be chosen, where $\F_r:=\F_{x_0,r}$ and $f_r:=f_{x_0,r}$ are as in \eqref{def:rescaling-operator} and $\omega$ is a cutoff function such that $\omega\equiv1$ in $B_{1/4}$ and $\omega\equiv0$ in $\R^d\setminus B_{1/2}$. Then, by the $C^{1,\alpha}$ regularity estimates up to the boundary (see \cite[Theorem 1.1]{silvestresirakov}) for the function $v-h$, we have $$\|v-h\|_{C^{1,\alpha}(D_\sigma)}\le C(\sigma+r).$$ In particular, if we choose $\sigma$ and $r$ small enough, $v$ is strictly monotone in the $\nu$ direction. Thus $v\ge0$ in $B_1$ and $v>0$ in $D_\sigma$. Moreover, if $g_{r}:=g_{x_0,r}$ is as in \eqref{def:rescaling-operator}, then 
                $$|\nabla v(x)|< g\left(x_0+r x,\frac{\nabla v(x)}{|\nabla v(x)|}\right)=g_{r}\left(x,\frac{\nabla v(x)}{|\nabla v(x)|}\right)\quad\text{for every}\quad x\in\partial \Omega_v\cap B_1,$$ 
                by \eqref{eq:subsol-cond} and since $\nabla v$ is close to $\nu$ on $\partial \Omega_v$. This implies that $v$ is a supersolution of \eqref{def:def-viscosity-solution} with operator $\F_r$, right-hand side $f_r$, free boundary condition $g_r$.
    Since $$\text{\rm dist}(x_0,\partial\Omega_{\underline u})>0,$$ by \cref{lemma:maximum}, then we can suppose that $\underline u\equiv0$ in $B_{r}(x_0)$.
				Then $$\widetilde w_k(x):=
				\begin{cases}
					\min\{ w_k(x),rv(\frac{x-x_0}{r})\}\quad&\text{if }x\in B_{r}(x_0)\\
					w_k(x)\quad&\text{if } x\in \overline B_1\setminus \overline B_{r}(x_0)
				\end{cases}$$ belong to the family $\mathcal{A}_{\underline u,\phi}$ in \eqref{def:a-u-phi}, by \cref{rem:minimum} and \eqref{e:min}. Moreover $\widetilde w_k\equiv0$ in a neighborhood of $x_0$, since $B_1\setminus \overline D_\sigma$ contains a neighborhood of $0$. This is a contradiction with $x_0\in\partial\Omega_u$.
			\end{proof}
			\subsection{Step 5. $u$ is continuous up to the boundary and well-defined}
            In this subsection we prove that the solution $u$ is continuous up to the boundary, with $u=\phi$ on $\partial B_1$, and $u$ is well-defined, namely $\mathcal{A}_{\underline u,\phi}\not=\emptyset$. 
            
            For what concerns the continuity of $u$ up to the boundary, we use standard barrier arguments as in \cite[Lemma 6.5]{hanlin} or \cite[Theorem 4.16]{xavixavi}.
        \begin{proposition}\label{prop:attacca}
               Suppose that $\F\in\EE$, $f\in C^0( \overline B_1)$ and $g\in {C^0}(\overline B_1,\mathbb{S}^{d-1})$ satisfies the hypothesis \ref{hyp:h1}. Let $\phi:\partial B_1\to\R$ be the boundary datum from \cref{t:existence} and let $\underline u$ be the strict minorant from \cref{def:strictminorant}, such that $\underline u\le \phi$ on $\partial B_1$.
               Let $u:\overline B_1\to\R$ be the function from \cref{t:existence}, defined in \eqref{e:def-viscosity-solution-u}, then $$u\in C^0(\overline B_1)\qquad\text{and}\qquad u=\phi\quad\text{on }\partial B_1.$$
            \end{proposition}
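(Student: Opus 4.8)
The plan is to read off the boundary behaviour of $u$ from a two-sided barrier argument at each point $x_0\in\partial B_1$, exploiting that $u$ is already locally Lipschitz (hence continuous) in $B_1$ by \cref{prop:lip-and-sol}. Every $w\in\mathcal A_{\underline u,\phi}$ is continuous on $\overline B_1$ with $w\ge\phi$ on $\partial B_1$, so the infimum defining $u$ gives at once $u\ge\phi$ on $\partial B_1$, and $u$ is upper semicontinuous on $\overline B_1$ as an infimum of continuous functions. Hence it suffices to prove, for every $x_0\in\partial B_1$, that $\limsup_{x\to x_0}u(x)\le\phi(x_0)$ and $\liminf_{x\to x_0}u(x)\ge\phi(x_0)$; together with the interior continuity and the inequality $u\ge\phi$ on $\partial B_1$ this yields $u=\phi$ on $\partial B_1$ and $u\in C^0(\overline B_1)$.

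For the upper estimate I would fix $x_0\in\partial B_1$ and $\eps>0$, set $y_0:=2x_0$ (so that $1\le|x-y_0|\le 3$ on $\overline B_1$, with equality to $1$ only at $x_0$) and consider $\Psi(x):=1-|x-y_0|^{-\gamma}$. Computing $\nabla\Psi$ and $D^2\Psi$ and using the structure condition \eqref{eq:pucci}, one gets that for a large universal $\gamma$ there is $\kappa=\kappa(\gamma)>0$ with $\F(D^2(A\Psi),\nabla(A\Psi),x)\le -A\kappa$ on $\overline B_1$ for every $A>0$. Since $\Psi\ge 0$, $\Psi(x_0)=0$, and $\Psi$ is bounded below by a positive constant on $\overline B_1\setminus B_\delta(x_0)$ for each $\delta>0$, choosing first $\delta$ small (so that $\phi$ and $\underline u$ stay below $\phi(x_0)+\eps$ on $\overline{B_\delta(x_0)}$, by continuity) and then $A$ large (to absorb $\|f\|_{L^\infty}$, $\|\phi\|_{L^\infty(\partial B_1)}$ and $\|\underline u\|_{L^\infty(\overline B_1)}$) makes
\[
W^+_{x_0,\eps}(x):=\phi(x_0)+\eps+A\,\Psi(x)
\]
a (smooth, hence viscosity) supersolution of $\F\le f$ in $B_1$ with $W^+_{x_0,\eps}\ge\phi$ on $\partial B_1$, $W^+_{x_0,\eps}\ge\underline u$ in $\overline B_1$ and $W^+_{x_0,\eps}>0$ in $B_1$; the last property makes condition $iii)$ of \cref{def:a-supersolution} vacuous, so $W^+_{x_0,\eps}\in\mathcal A_{\underline u,\phi}$ — in particular $\mathcal A_{\underline u,\phi}\ne\emptyset$, so $u$ is well defined. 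Then $u\le W^+_{x_0,\eps}$ in $\overline B_1$, whence $\limsup_{x\to x_0}u(x)\le W^+_{x_0,\eps}(x_0)=\phi(x_0)+\eps$, and letting $\eps\to 0$ gives the upper estimate and, for $x=x_0$, the equality $u(x_0)=\phi(x_0)$.

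For the lower estimate, if $\phi(x_0)=0$ it is trivial since $u\ge 0$, so assume $m:=\phi(x_0)>0$. I would fix $\eps\in(0,m)$, pick $\delta>0$ with $\phi\ge m-\eps$ on $\partial B_1\cap\overline{B_\delta(x_0)}$, and let $h$ solve $\F(D^2h,\nabla h,x)=f$ in $D:=B_1\cap B_\delta(x_0)$ with continuous boundary data $\eta$ on $\partial D$ satisfying $0\le\eta\le\phi$ on $\partial D\cap\partial B_1$, $\eta(x_0)=m-\eps$ and $\eta\equiv 0$ near $\partial D\cap\partial B_\delta(x_0)$; since $D$ has an exterior ball at $x_0$, the standard barrier argument of \cite[Lemma 6.5]{hanlin} (see also \cite[Theorem 4.16]{xavixavi}) gives $h\in C^0(\overline D)$ with $h(x_0)=m-\eps$. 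The heart of the matter is then to show $w\ge h$ in $D$ for every $w\in\mathcal A_{\underline u,\phi}$; granting this, $u=\inf_w w\ge h$ in $D$, so $\liminf_{x\to x_0}u(x)\ge h(x_0)=m-\eps$ and $\eps\to 0$ closes the argument. One has $h\le w$ on $\partial D$ (there $w\ge\phi\ge\eta=h$ on $\partial D\cap\partial B_1$ and $w\ge 0=h$ on $\partial D\cap\partial B_\delta(x_0)$), and in the interior one wants to rule out a positive interior maximum of $h-w$ by the strong maximum principle for the Pucci extremal operator on the open set $\Omega_w\cap D$, using that after the replacement of \cref{lemma:ex1} one may take $w$ solving $\F=f$ in $\Omega_w$, that $w\ge 0$, and that $w$ satisfies the supersolution condition of \cref{def:def-sol} on $\partial\Omega_w$.

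I expect this last comparison to be the main obstacle. Unlike the plain Dirichlet problem, where one simply invokes the comparison principle in $D$, here the competitor $w$ may vanish somewhere in $D$ and one must prevent the contact set $\{h>w\}$ from reaching the free boundary $\partial\Omega_w\cap D$, where $w=0$ while $h$ may be positive; this is exactly where the free boundary supersolution property of $w$ is used, together with the smallness of $\delta$, which turns the right-hand side $f$ into a lower-order term at that scale, so that the construction is the free boundary counterpart of the classical barrier arguments of \cite{hanlin,xavixavi}. The upper barrier, by contrast, is routine once the exterior ball of $B_1$ at $x_0$ is exploited and the amplitude $A$ is taken large enough to dominate $f$, $\phi$ and $\underline u$.
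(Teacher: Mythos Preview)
Your upper barrier $W^+_{x_0,\eps}$ is essentially the paper's construction and works as stated; it also gives $\mathcal A_{\underline u,\phi}\ne\emptyset$ and $u(x_0)=\phi(x_0)$, as you note.

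The lower estimate, however, has a genuine gap. You replace the paper's explicit lower barrier by a Dirichlet solution $h$ of $\F=f$ on $D=B_1\cap B_\delta(x_0)$, and you correctly identify that the dangerous case is a contact point $z\in\partial\Omega_w\cap D$. But at such a point the supersolution property of $w$ only yields $|\nabla h(z)|\le g\bigl(z,\tfrac{\nabla h(z)}{|\nabla h(z)|}\bigr)\le\|g\|_{L^\infty}$ (when $\nabla h(z)\ne0$); to reach a contradiction you would need a \emph{lower} bound on $|\nabla h(z)|$ exceeding $\|g\|_{L^\infty}$. For an abstract Dirichlet solution no such pointwise lower bound is available: nothing prevents $z$ from lying where $|\nabla h|$ is small, or even zero. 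Your remark that ``smallness of $\delta$ turns $f$ into a lower-order term'' does not help here---shrinking $D$ may make $\sup_D|\nabla h|$ large, but gives no control at an unknown interior point. Similarly, the replacement from \cref{lemma:ex1} and the strong maximum principle let you push the contact from $\Omega_w$ to $\partial\Omega_w$, but they do not close the argument. The same obstruction arises if the contact point falls in $B_1\setminus\overline\Omega_w$: there $w\equiv0$, so a touching point would force $\nabla h(z)=0$, which you again cannot exclude.

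This is exactly why the paper keeps an \emph{explicit} barrier also from below. It sets $\Theta:=\phi(x_0)-\eps-M\widetilde\psi$, with $\widetilde\psi$ the radial profile of \cref{lemma:barrier2} centred at $\overline x=\tfrac32 x_0$. The geometry forces any contact point to lie in the annulus $B_1(\overline x)\setminus\overline B_{1/2}(\overline x)$, where $|\nabla\Theta|=M|\nabla\widetilde\psi|$ is bounded below by a quantity proportional to $M$. Choosing $M$ large then rules out $z\in\partial\Omega_w$ (since the free boundary condition would force $|\nabla\Theta(z)|\le\|g\|_{L^\infty}$) and $z\in B_1\setminus\overline\Omega_w$ (since $\nabla\Theta(z)\ne0$ there), leaving only $z\in\Omega_w$, where the interior maximum principle and the radial monotonicity of $\Theta$ finish the job. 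Your own $\Psi$ can be recycled for this purpose: take $W^-_{x_0,\eps}:=\phi(x_0)-\eps-A\Psi$ and run the same contact-point analysis, rather than passing through a Dirichlet solution whose gradient you cannot control.
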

		\begin{proof}
		    Since $u$ is Lipschitz continuous in $B_1$ and $u=\phi$ on $\partial B_1$, in order to prove the proposition, we need to show that $\lim_{x\to x_0}u(x)=\phi(x_0)$ for every $x_0\in\partial B_1$.
            
            By the continuity of $\phi$, for every $\eps>0$ there is $\delta>0$ such that 
            $$|\phi(x)-\phi(x_0)|\le \eps\quad\text{for every}\quad x\in \partial B_1\cap B_\delta(x_0).$$
            Let $\overline x:=3/2x_0$, then, by \cref{lemma:barrier2}, there is a function $\widetilde\psi$ such that $\widetilde\psi\equiv0$ in $B_{1/2}(\overline x)$, $\widetilde\psi\equiv 1$ in $B_{1}(\overline x)$ and $$\F(D^2\widetilde\psi,\nabla\widetilde\psi,x)\le -\sigma\quad\text{in }B_{1}(\overline x)\setminus \overline B_{1/2}(\overline x),$$ for some $\sigma>0$. Since in $\overline B_1\setminus B_{\delta}(x_0)$ the function $\widetilde\psi$ is bounded by below, we have that 
    \be\label{stimaphi}|\phi(x)-\phi(x_0)|\le \eps+M\widetilde \psi \quad\text{for every}\quad x\in \partial B_1,\ee 
    for some $M>0$ large enough.
            By a similar argument, for $M$ large enough, it also holds \be\label{stimaunderline}| \underline u(x)- \underline u(x_0)|\le \eps+M\widetilde \psi \quad\text{for every}\quad x\in \overline B_1.\ee
            Up to enlarge $M$, the computation from \eqref{eq:richiama} shows that the function $\widetilde \zeta:=M\widetilde\psi $ satisfies \be\label{stima-f1}\F(D^2\widetilde\zeta,\nabla\widetilde\zeta,x)\le -C(M)\le -\|f\|_{L^\infty(B_1)}\le f\quad\text{in }B_{1}(\overline x)\setminus \overline B_{1/2}(\overline x),\ee and 
            \be\label{stima-f2}\F(-D^2\widetilde\zeta,-\nabla\widetilde\zeta,x)\ge C(M)\ge\|f\|_{L^\infty(B_1)}\ge f\quad\text{in }B_{1}(\overline x)\setminus \overline B_{1/2}(\overline x),
            \ee since $C(M)\to+\infty$ as $M\to+\infty$.
            By \eqref{stimaphi} and \eqref{stima-f1}, we have $$\phi\le \phi(x_0)+\eps+\widetilde\zeta\quad\text{on }\partial B_1\qquad\text{and}\qquad  \phi(x_0)+\eps+\widetilde\zeta\in\mathcal{A},$$ where $\mathcal{A}$ is the family of supersolutions in \cref{def:a-supersolution}. Moreover, by \eqref{stimaunderline}, we get $$\underline u\le \underline u(x_0)+\eps+\widetilde \zeta\le \phi(x_0)+\eps+\widetilde \zeta\quad\text{in }B_1,$$ then, $\phi(x_0)+\eps+\widetilde \zeta\in\mathcal{A}_{\underline u,\phi}$, where $\mathcal{A}_{\underline u,\phi}$ is the family in \eqref{def:a-u-phi}.           
            Therefore \be\label{stimattacca1}u\le \underline u(x_0)+\eps+\widetilde\zeta\quad\text{in } \overline B_1.\ee 
            On the other hand, by \eqref{stimaphi}, for every $w\in\mathcal{A}_{\underline u,\phi}$, we have 
            $$\Theta:=\phi(x_0)-\eps-\widetilde\zeta\le \phi \le w\quad\text{on } \partial B_1,$$ 
            and, for $M$ large enough, $\Theta\le w$ in $\overline B_1\setminus B_1(\overline x)$.
            We claim that \be\label{unastimafinale} \Theta\le w \quad\text{in }\overline B_1.
            \ee
            Suppose by contradiction that there is some $y\in B_1\cap( B_1(\overline x)\setminus\overline B_{1/2}(\overline x))$ such that $\Theta(y)> w(y)$. Then, there is some $t>0$ and some $z\in B_1\cap ( B_1(\overline x)\setminus \overline B_{1/2}(\overline x))$ such that $ \Theta-t$ touches $w$ from below at $z$. If $z\in B_1\setminus \overline \Omega_w$, then $|\nabla\Theta(z)|=0$, which is a contradiction. If $z\in\partial\Omega_w$, since $w$ is a supersolution on the free boundary, then $$|\nabla \Theta(z)|\le g\left(z,\frac{\nabla \Theta(z)}{|\nabla \Theta(z)|}\right)\le \|g\|_{L^\infty(B_1)},$$ which is a contradiction for $M$ large enough.
            Then $w(z)>0$ and thus $$\F(D^2\Theta,\nabla \Theta ,x)\ge f\quad\text{and}\quad\F(D^2 w,\nabla  w,x)\le f$$ in a neighborhood of $z$, by \eqref{stima-f2}, which implies that $\Theta-t\equiv w$ in the connected component of $\Omega_w\cap (B_1(\overline x)\setminus B_{1/2}(\overline x))$ where $z$ belongs. Since $\Theta$ is radial decreasing, then $$\Theta(x)-t\ge\Theta(z)-t=w(z)>0\quad\text{for every}\quad x\in B_{|z|}(\overline x),$$ then $w>0$ in $B_{|z|}(\overline x)\cap \overline B_1$. Therefore $\Theta(x_0)-t=w(x_0)$, which is a contradiction, since $$\Theta(x_0)-t=\phi(x_0)-\eps-t<\phi(x_0)\le w(x_0).$$ This proves \eqref{unastimafinale} and thus, since $u$ is the infimum of the functions in $\mathcal{A}_{\underline u,\phi}$, we obtain \be\label{stimattacca2}
            \phi(x_0)-\eps-\widetilde \zeta\le u\quad\text{in }\overline B_1
            \ee
            Then \eqref{stimattacca1} and \eqref{stimattacca2} give 
            $$ |u(x)- \phi(x_0)|\le \eps+\widetilde\zeta \quad\text{for every}\quad x\in \overline B_1.$$ 
           Passing to the limit, we have $$\lim_{x\to x_0}|u(x)-\phi(x_0)|\le \eps,$$ which implies the continuity of $u$ up to the boundary, with $u=\phi$ on $\partial B_1$, by the arbitrariness of $\eps$.
		\end{proof}	
 We finally prove that the function $u$ is well-defined.
\begin{proposition}\label{prop:construction}
        Suppose that $\F\in\EE$, $f\in C^0( \overline B_1)$ and $g\in {C^0}(\overline B_1,\mathbb{S}^{d-1})$ satisfies the hypothesis \ref{hyp:h1}.
        Let $u$ be the function from \cref{t:existence}, defined in \eqref{e:def-viscosity-solution-u}, then $u$ is well-defined, namely, if $\mathcal{A}_{\underline u,\phi}$ is the family in \eqref{def:a-u-phi}, then $$\mathcal{A}_{\underline u,\phi}\not=\emptyset.$$
        \end{proposition}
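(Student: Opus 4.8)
The plan is to produce a single function belonging to $\mathcal{A}_{\underline u,\phi}$ by an explicit barrier construction. The guiding observation is that if a candidate $w$ is \emph{strictly positive} on the whole of $\overline B_1$, then $\Omega_w\cap B_1=B_1$ and $\partial\Omega_w\cap B_1=\emptyset$, so condition $iii)$ of \cref{def:a-supersolution} is vacuously satisfied; in that case ``$w\in\mathcal{A}$'' amounts only to requiring that $w$ be continuous, non-negative, and a viscosity supersolution of $\F(D^2w,\nabla w,x)=f$ in $B_1$. Since $\phi$ and $\underline u$ are continuous on compact sets, they are bounded, so arranging in addition $w\ge\underline u$ on $\overline B_1$ and $w\ge\phi$ on $\partial B_1$ will only cost a large additive constant.

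Concretely, I would look for $w$ of the form $w=M+\zeta$, where $M>0$ is a large constant and $\zeta$ is a fixed smooth strict supersolution, i.e. $\F(D^2\zeta,\nabla\zeta,x)\le-\|f\|_{L^\infty(B_1)}$ in $B_1$; since $D^2w=D^2\zeta$ and $\nabla w=\nabla\zeta$, this gives $\F(D^2w,\nabla w,x)\le-\|f\|_{L^\infty(B_1)}\le f$ in $B_1$, and a smooth classical supersolution is a viscosity supersolution (a test function touching from below at a point has there smaller Hessian and equal gradient, and $\F$ is non-decreasing in the matrix entry by \eqref{eq:pucci}). For $\zeta$ I would take the one-dimensional exponential barrier $\zeta(x):=\varepsilon\big(1-e^{K(x\cdot e_1)}\big)$: then $D^2\zeta$ has the single nonzero eigenvalue $-\varepsilon K^2e^{K(x\cdot e_1)}<0$ and $|\nabla\zeta|=\varepsilon Ke^{K(x\cdot e_1)}$, so, using $\F(0,0,x)\equiv0$ and \eqref{eq:pucci} with $N=D^2\zeta$, $\xi=\nabla\zeta$, $\eta=0$,
\[
\F(D^2\zeta,\nabla\zeta,x)\le\mathcal{M}^+(D^2\zeta)+[\F]_{\text{Lip}(\R^d)}|\nabla\zeta|=\varepsilon Ke^{K(x\cdot e_1)}\big(-\lambda K+[\F]_{\text{Lip}(\R^d)}\big).
\]
Choosing $K$ large (depending only on $\lambda$ and $[\F]_{\text{Lip}(\R^d)}$) makes the last bracket $\le-1$, hence $\F(D^2\zeta,\nabla\zeta,x)\le-\varepsilon Ke^{-K}$ on $B_1$; then choosing $\varepsilon$ large (depending on $K$ and $\|f\|_{L^\infty(B_1)}$) yields $\F(D^2\zeta,\nabla\zeta,x)\le-\|f\|_{L^\infty(B_1)}$ in $B_1$. (Alternatively one can use the radial supersolution of \cref{lemma:barrier2} centred at a point $\overline x$ far from $B_1$ and with radius $\rho$ chosen so that $\overline B_1$ lies inside the annulus $B_\rho(\overline x)\setminus\overline B_{\rho/2}(\overline x)$ where its strict supersolution inequality holds, as in the proof of \cref{prop:attacca}.)

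It then remains to fix $M$ large enough, depending only on $\varepsilon$, $K$, $\|\phi\|_{L^\infty(\partial B_1)}$ and $\|\underline u\|_{L^\infty(\overline B_1)}$, so that simultaneously $w=M+\zeta>0$ on $\overline B_1$ (which holds once $M>\varepsilon(e^{K}-1)$), $w\ge\phi$ on $\partial B_1$, and $w\ge\underline u$ on $\overline B_1$ (for which $M\ge\|\phi\|_{L^\infty(\partial B_1)}+\varepsilon(e^{K}-1)$ and $M\ge\|\underline u\|_{L^\infty(\overline B_1)}+\varepsilon(e^{K}-1)$ suffice, since $\zeta\ge-\varepsilon(e^{K}-1)$ on $\overline B_1$). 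With these choices $w\in\mathcal{A}$ by the first paragraph and $w\in\mathcal{A}_{\underline u,\phi}$ by construction, so $\mathcal{A}_{\underline u,\phi}\neq\emptyset$.

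The only point that needs genuine care — the \emph{main obstacle} — is producing the strict supersolution $\zeta$ with an inequality that does not degenerate when $[\F]_{\text{Lip}(\R^d)}$ is large: a naive radial barrier such as $-e^{\gamma|x|^2}$ fails near the origin, where the first-order term is not absorbed by the ellipticity, which is why a one-dimensional exponential — whose gradient and Hessian carry the same factor $e^{K(x\cdot e_1)}$, so that the two terms can be balanced by a single large parameter $K$ — is used instead. Everything else is elementary.
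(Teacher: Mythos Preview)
Your proof is correct and essentially identical to the paper's: the paper constructs $w(x)=C_1-C_2e^{\gamma x_d}$, chooses $\gamma>[\F]_{\text{Lip}(\R^d)}/\lambda$ so that $\F(D^2w,\nabla w,x)\le C_2\gamma e^{\gamma x_d}(-\lambda\gamma+[\F]_{\text{Lip}(\R^d)})\le -\|f\|_{L^\infty(B_1)}$ for $C_2$ large, and then takes $C_1$ large to force $w>0$, $w\ge\underline u$ and $w\ge\phi$ --- which is exactly your $M+\zeta$ with the one-dimensional exponential barrier, up to the choice of coordinate axis.
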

        \begin{proof} 
        Let $w(x):=C_1-C_2e^{\gamma x_d}$ for some $C_1>0$, $C_2>0$ and $\gamma>0$ to be chosen. Then, by \eqref{eq:pucci}, we have \bea \F(D^2w,\nabla w,x)&=\F(-C_2\gamma^2e^{\gamma x_d}e_d\otimes e_d,-C_2\gamma e^{\gamma x_d}e_d,x)\\&\le \mathcal{M}^+(-C_2\gamma^2e^{\gamma x_d}e_d\otimes e_d)+[\F]_{\text{Lip}(\R^d)}C_2\gamma e^{\gamma x_d}\\&
        =C_2\gamma e^{\gamma x_d}(-\lambda\gamma+[\F]_{\text{Lip}(\R^d)})\eea We choose $\gamma>\frac{[\F]_{\text{Lip}(\R^d)}}{\lambda}$ and $C_2$ large enough, so that $$\F(D^2w,\nabla w,x)\le -\|f\|_{L^\infty(B_1)}\le f\quad\text{in }B_1$$ in viscosity sense. Then we choose $C_1$ large enough such that $w$ is strictly positive, $$w\ge \underline u\quad\text{in }B_1 \qquad\text{and}\qquad w\ge \phi\quad\text{on }\partial B_1.$$ This means that $w\in\mathcal{A}_{\underline u,\phi}$ and so $\mathcal{A}_{\underline u,\phi}\not=\emptyset$, as desired.
\end{proof}

			\section{Quadratic improvement of flatness}\label{section5} In this section we prove the following quadratic improvement of flatness, following the general strategy in \cite{DeSilva:FreeBdRegularityOnePhase}.
            \begin{proposition}[Quadratic improvement of flatness]\label{t:improvement of flatness} There are universal constants $r_0>0$, $\rho>0$ and $\overline C>0$ such that the following holds. 
				Suppose that for some $r\in(0,r_0]$ the following hypotheses hold.
				\begin{enumerate}[label=\wackyenum*]
    \item\label{eq:vicinanza-F1} The fully nonlinear operator $\F\in\EE$ satisfies \ref{hyp:h2} and \bea
					\|\F(M,\xi,\cdot)&-\F(M,\eta,\cdot)\|_{L^\infty(B_1)}
					\le r|\xi-\eta| \\
					\|\F(M,\xi,\cdot)&-\F(M,\xi,0)\|_{L^\infty(B_1)}\le r^{\beta}(r+r|\xi|+|M|),
                    \eea for every  $M\in\mathcal{S}^{d\times d},\ \xi,\eta\in\R^d$.
					 \item\label{eq:vicinanza-f} The right-hand side $f$ satisfies \bea\|f-f(0)\|_{L^\infty(B_1)}\le r^{1+\beta}.\eea
					\item\label{eq:vicinanza-g1} The free boundary condition $g$ satisfies the hypothesis \ref{hyp:h1} and \bea
                      \|g(x,\nu_1)-g(0,\nu_0)-\nabla_xg(0,\nu_0)\cdot x-\nabla_\theta g(0,\nu_0)\cdot (\nu_1-\nu_0)\|_{L^\infty(B_1)}&\le r^{1+\beta}+C|\nu_1-\nu_0|^{1+\beta},\\
                    |\nabla_xg(0,\nu_1)-\nabla_xg(0,\nu_0)|&\le r |\nu_1-\nu_0|^\beta,\\ |\nabla_\theta g(0,\nu_1)-\nabla_\theta g(0,\nu_0)|&\le C |\nu_1-\nu_0|^\beta ,\\
                    \|\nabla_x g(0,\cdot)\|_{L^\infty(\mathbb{S}^{d-1})}&\le r,
                    \eea for every $\nu_0,\nu_1\in\mathbb{S}^{d-1},$ where $C>0$ is a universal constant.
					\item\label{eq:vicinanza-p} There is a quadratic polynomial \bea
					p\in\mathcal{P}(e_d,\F,f,g)\quad\text{and}\quad \|p\|_{L^\infty(B_1)}\le r,
					\eea where $\mathcal{P}(e_d,\F,f,g)$ is the family of quadratic polynomials from \eqref{def:polynomial}.
				\end{enumerate}
                Let $u:B_1\to \R$ be a non-negative continuous viscosity solution to \eqref{def:def-viscosity-solution}, with $0\in\partial\Omega_u,$ such that
					$$\left(g(0,e_d)x_d+p(x)-r^{1+\alpha}\right)_+\le u(x)\le \left(g(0,e_d)x_d+p(x)+r^{1+\alpha}\right)_+\quad\text{for every}\quad x\in B_1,$$ 
				then there are a unit vector $\nu'\in\mathbb{S}^{d-1}$ and a quadratic polynomial $p'\in\mathcal{P}(\nu',\F, f, g)$ such that
				$$\left(g(0,\nu')(x\cdot\nu')+\rho p'(x)-(r\rho)^{1+\alpha}\right)_+\le u_{\rho}(x)\le \left(g(0,\nu')(x\cdot\nu')+\rho p'(x)+(r\rho)^{1+\alpha}\right)_+$$ for every $x\in B_1$, where $u_\rho(x):=\frac{u(\rho x)}{\rho}$.
				Moreover, the following estimates hold
				\be\label{eq:stime}|\nu'-e_d|\le \overline Cr^{1+\alpha},\quad \|p'-p\|_{L^\infty(B_1)}\le \overline Cr^{1+\alpha}.\ee 
			\end{proposition}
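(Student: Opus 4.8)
The plan is to argue by contradiction and to linearize, in the spirit of \cite{DeSilva:FreeBdRegularityOnePhase} but keeping the second order information. I would fix the universal constant $\rho\in(0,1)$ only at the very end, requiring merely that $C_0\rho^{1+\alpha_0}\le\tfrac14\rho^{1+\alpha}$ (possible since $\alpha<\alpha_0$), with $C_0,\alpha_0$ as in \cref{prop:c2alphaestimate}. Suppose the statement fails: then for the fixed $\rho$ there are $r_k\to0$, data $\F_k\in\EE$, $f_k$, $g_k$, polynomials $p_k\in\mathcal P(e_d,\F_k,f_k,g_k)$ satisfying \ref{eq:vicinanza-F1}--\ref{eq:vicinanza-p} with $r=r_k$, and viscosity solutions $u_k$ with $0\in\partial\Omega_{u_k}$, $r_k^{1+\alpha}$-flat with respect to $g_k(0,e_d)x_d+p_k(x)$, for which no admissible pair $(\nu',p')$ exists. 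Since $\|p_k\|_{L^\infty(B_1)}\le r_k$ we have $|D^2p_k|\le Cr_k\to0$; up to a subsequence, the rescaled difference quotients $N\mapsto \tfrac{1}{r_k^{1+\alpha}}\bigl(\F_k(D^2p_k+r_k^{1+\alpha}N,g_k(0,e_d)e_d,0)-f_k(0)\bigr)$ converge (by uniform ellipticity they are equi-Lipschitz and bounded in $N$) to an operator $\widetilde\F\in\EE$ which inherits \ref{hyp:h2}; and $\tau(e_d,g_k)\to\tau$ with $\tau\cdot e_d=1$, $|\tau|\le C$ by \ref{eq:vicinanza-g1}.

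Next I would introduce the linearized functions $\widetilde u_k(x):=\dfrac{u_k(x)-g_k(0,e_d)x_d-p_k(x)}{r_k^{1+\alpha}}$ on $\overline\Omega_{u_k}\cap B_{1/2}$ (extended across the free boundary as in \cite{DeSilva:FreeBdRegularityOnePhase}). By the partial Harnack inequality \cref{prop:partial-harnack}, the sets $\Omega_{u_k}\cap B_{1/2}$ converge to $B_{1/2}^+$ and the $\widetilde u_k$ are equibounded and equicontinuous, so along a subsequence $\widetilde u_k\to\widetilde u$ locally uniformly on $B_{1/2}^+\cup B_{1/2}'$, with $\|\widetilde u\|_{L^\infty}\le1$ and, since $u_k(0)=p_k(0)=0$, with $\widetilde u(0)=0$. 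By \cref{lemma:linearized-problem}, $\widetilde u$ is a viscosity solution of $\widetilde\F(D^2\widetilde u)=0$ in $B_{1/2}^+$, $\nabla\widetilde u\cdot\tau=0$ on $B_{1/2}'$, so \cref{prop:c2alphaestimate} applies and, using $\widetilde u(0)=0$, gives $|\nabla\widetilde u(0)|+|D^2\widetilde u(0)|\le C_0$ and $\bigl\|\widetilde u(x)-\nabla\widetilde u(0)\cdot x-\tfrac12D^2\widetilde u(0)x\cdot x\bigr\|_{L^\infty(B_\rho)}\le C_0\rho^{2+\alpha_0}$.

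Then comes the adjustment step. The key algebraic point is that, since $\nabla_\theta g(0,e_d)$ is tangential, $\tau=e_d-\tfrac{1}{g(0,e_d)}P'$ with $P'$ the tangential part of $\nabla_\theta g(0,e_d)$, so the oblique condition $\nabla\widetilde u(0)\cdot\tau=0$ reads $\partial_d\widetilde u(0)=\tfrac{P'\cdot\nabla'\widetilde u(0)}{g(0,e_d)}$; this is precisely the compatibility that lets the tilted half-plane solution absorb the whole linear part $r_k^{1+\alpha}\,\nabla\widetilde u(0)\cdot x$. Concretely I would take $\nu'$ to be the unit vector with tangential part $\tfrac{r_k^{1+\alpha}}{g(0,e_d)}\nabla'\widetilde u(0)$, so $|\nu'-e_d|\le\overline Cr_k^{1+\alpha}$, and $p'_0:=p_k+\tfrac{r_k^{1+\alpha}}{2}D^2\widetilde u(0)\,x\cdot x$; Taylor expanding $g(0,\cdot)$ gives $g(0,\nu')(x\cdot\nu')=g_k(0,e_d)x_d+r_k^{1+\alpha}\nabla\widetilde u(0)\cdot x+O(r_k^{2+2\alpha})$ on $B_1$, while $\widetilde\F(D^2\widetilde u(0))=0$ and the Lipschitz-in-gradient bound from \ref{eq:vicinanza-F1} give $\F_k(D^2p'_0,g(0,\nu')\nu',0)=f_k(0)+o(r_k^{1+\alpha})$ and $\nabla p'_0\cdot\tau(\nu',g_k)-\nabla_xg(0,\nu')\cdot x=o(r_k^{1+\alpha})$ on $\{x\cdot\nu'=0\}$. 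Since $\mathcal P(\nu',\F_k,f_k,g_k)$ is a nonempty affine set and the associated linear constraint map (the scalar equation $\F_k(2M,\cdot,0)=f_k(0)$ plus the $d-1$ boundary relations) admits a universal right inverse, I correct $p'_0$ by a quadratic of size $o(r_k^{1+\alpha})$ to get $p'\in\mathcal P(\nu',\F_k,f_k,g_k)$ with $\|p'-p_k\|_{L^\infty(B_1)}\le\overline Cr_k^{1+\alpha}$. Collecting the partial-Harnack error, the Taylor errors $O(r_k^{2+2\alpha})\le Cr_k^{1+\alpha}r_0^{1+\alpha}$, the correction, and the $C^{2,\alpha_0}$ error, one gets after rescaling by $\rho$
\[
\bigl|u_{k,\rho}(x)-g(0,\nu')(x\cdot\nu')-\rho\,p'(x)\bigr|\le r_k^{1+\alpha}\Bigl(C_0\rho^{1+\alpha_0}+Cr_0^{1+\alpha}+o_k(1)\Bigr)\le(r_k\rho)^{1+\alpha}
\]
for $\rho$ small (here $\alpha<\alpha_0$), then $r_0$ small, then $k$ large; since $u_{k,\rho}$ equals its positive part off its positivity set, this yields the two-sided flatness at level $(r_k\rho)^{1+\alpha}$ and the estimates \eqref{eq:stime}, contradicting the choice of $u_k$.

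I expect the main obstacle to be exactly this last step: showing that the naively linearized pair $(\nu',p'_0)$ can be corrected to an \emph{exact} element of $\mathbb S^{d-1}\times\mathcal P(\nu',\F_k,f_k,g_k)$ by a modification small enough not to spoil \eqref{eq:stime}, and the bookkeeping of all the error terms so that their sum is genuinely $\le(r_k\rho)^{1+\alpha}$ — this is where the precise $r$-dependent smallness in \ref{eq:vicinanza-F1}--\ref{eq:vicinanza-g1} and the gap $\alpha<\alpha_0$ enter decisively.
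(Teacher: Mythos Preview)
Your outline is essentially the paper's own proof: contradiction, linearization via the partial Harnack inequality (\cref{prop:partial-harnack}), identification of the limit as a solution of the oblique problem (\cref{lemma:linearized-problem}), application of the $C^{2,\alpha_0}$ estimate (\cref{prop:c2alphaestimate}), and finally the adjustment of $(\nu,p)$ using $\nabla\widetilde u(0)$ and $D^2\widetilde u(0)$.

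One technical slip in your correction step: you write that $\mathcal P(\nu',\F_k,f_k,g_k)$ is ``a nonempty affine set'' and that the constraint map is linear. The boundary relations are indeed linear in $M$, but the interior constraint $\F_k(D^2p,g_k(0,\nu')\nu',0)=f_k(0)$ is \emph{not}, since $\F_k$ is fully nonlinear. So there is no linear right inverse to invoke. The paper handles the two constraints separately: first it corrects the oblique boundary condition by an explicit entrywise reweighting and a rotation (this is \cref{lemma:stima_vicino_p_p'}, which is linear and does admit a universal bound), obtaining $\widetilde p_n$; then it fixes the interior equation by adding a one-parameter rank-one perturbation $\tfrac12 t_n(\tau_n^\perp\otimes\tau_n^\perp)x\cdot x$ along a direction orthogonal to $\tau(\nu_n,g_n)$, so the boundary condition is preserved, and solves for $t_n$ by the intermediate value theorem using the Pucci bounds $\lambda t_n\le \F_k(D^2\widetilde p_n+t_n\tau_n^\perp\otimes\tau_n^\perp,\cdot,0)-\F_k(D^2\widetilde p_n,\cdot,0)\le \Lambda t_n$. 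This gives $|t_n|=o(r_n^{1+\alpha})$ because $\widetilde\F_n(W,0,0)\to\widetilde\F_\infty(W)=0$. If you replace your ``affine right inverse'' sentence by this two-step correction, your argument goes through exactly as in the paper.
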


			\subsection{Some estimates.}
			In this subsection we prove the following lemmas, which provide useful estimates for the proof of \cref{t:improvement of flatness}.
			\begin{lemma}\label{lemma:another-estimate}
                Suppose that the hypotheses \ref{eq:vicinanza-F1}, \ref{eq:vicinanza-f}, \ref{eq:vicinanza-p} hold.
                Let $u:B_1\to \R$ be a non-negative continuous viscosity solution to \eqref{def:def-viscosity-solution}, then $$|\F(D^2p,g(0,e_d)e_d+\nabla p,x)-f(x)|\le Cr^{1+\beta}\quad\text{for every}\quad x\in B_1,$$ for some universal constant $C>0$.
			\end{lemma}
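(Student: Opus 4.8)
The plan is to compare the quantity $\F(D^2p,g(0,e_d)e_d+\nabla p(x),x)-f(x)$ with the number $\F(D^2p,g(0,e_d)e_d,0)-f(0)$, which vanishes: indeed, by \ref{eq:vicinanza-p} the polynomial $p$ belongs to $\mathcal{P}(e_d,\F,f,g)$, so it solves the first equation in \eqref{def:sol-pol} with $\nu=e_d$, i.e.\ $\F(D^2p,g(0,e_d)e_d,0)=f(0)$ (this is a scalar identity, since the coefficients are frozen at $0$ and $p$ is quadratic). Note that the solution $u$ plays no role in the estimate; only the membership $p\in\mathcal{P}(e_d,\F,f,g)$ and the hypotheses \ref{eq:vicinanza-F1} and \ref{eq:vicinanza-f} are used. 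As a preliminary step I would record that, writing $p(x)=Mx\cdot x$, the Hessian $D^2p=M+M^T$ is a constant symmetric matrix and $\nabla p(x)=(D^2p)x$ is linear; hence the bound $\|p\|_{L^\infty(B_1)}\le r$ from \ref{eq:vicinanza-p} gives, by homogeneity, $|D^2p|\le Cr$ and $|\nabla p(x)|\le Cr$ for every $x\in B_1$, with $C>0$ universal (in fact $|D^2p|=2\|p\|_{L^\infty(B_1)}$ for the operator norm).

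Next, I would fix $x\in B_1$, set $\xi_x:=g(0,e_d)e_d+\nabla p(x)$ (so that $|\xi_x|\le g(0,e_d)+|\nabla p(x)|\le \|g\|_{L^\infty}+Cr\le C$ by \ref{hyp:h1} and the boundedness of $g$), and telescope, using $\F(D^2p,g(0,e_d)e_d,0)=f(0)$:
$$\F(D^2p,\xi_x,x)-f(x)=\big[\F(D^2p,\xi_x,x)-\F(D^2p,\xi_x,0)\big]+\big[\F(D^2p,\xi_x,0)-\F(D^2p,g(0,e_d)e_d,0)\big]-\big[f(x)-f(0)\big].$$
The first bracket I would bound by the second inequality in \ref{eq:vicinanza-F1}, applied with the matrix $D^2p$ and the frozen vector $\xi_x$: it is at most $r^{\beta}\big(r+r|\xi_x|+|D^2p|\big)\le Cr^{1+\beta}$. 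The second bracket I would bound by the first inequality in \ref{eq:vicinanza-F1}, evaluated at the point $0$ with $\xi-\eta=\nabla p(x)$: it is at most $r\,|\nabla p(x)|\le Cr^{2}\le Cr^{1+\beta}$, using $\beta\le1$ and $r\le r_0\le1$. The third bracket is at most $r^{1+\beta}$ by \ref{eq:vicinanza-f}. Summing the three estimates and using that $x\in B_1$ was arbitrary yields the claim.

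I do not expect a genuine obstacle here. The only step that is not a one-line invocation of a structural hypothesis is the passage from $\|p\|_{L^\infty(B_1)}\le r$ to the pointwise bounds on $D^2p$ and $\nabla p$ on $B_1$, and this is immediate since a quadratic polynomial is determined by its constant Hessian. Everything else is a careful accounting of the three error terms produced by \ref{eq:vicinanza-F1} and \ref{eq:vicinanza-f}.
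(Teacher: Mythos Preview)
Your proposal is correct and follows essentially the same approach as the paper: telescope the difference using $\F(D^2p,g(0,e_d)e_d,0)=f(0)$, then bound the three resulting terms by the two inequalities in \ref{eq:vicinanza-F1} and by \ref{eq:vicinanza-f}, together with $|D^2p|,|\nabla p|\le Cr$ from \ref{eq:vicinanza-p}. Your observation that the solution $u$ plays no role is also correct.
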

			\begin{proof} First notice that by \ref{eq:vicinanza-F1} and \ref{eq:vicinanza-p}, we have
				\bea |\F(D^2p,g(0,e_d)e_d+\nabla p,x)-\F(D^2p,g(0,e_d)e_d+\nabla p,0)|&\le r^{\beta}(r+r|g(0,e_d)e_d+\nabla p|\\&\qquad+|D^2p|)\\&\le Cr^{1+\beta}.\eea
				By \ref{eq:vicinanza-p} we obtain $\mathcal F(D^2p,g(0,e_d)e_d,0)=f(0)$, then we have 
				\begin{align*}|\F(D^2p,g(0,e_d)e_d+\nabla p,x)-f(x)|&=|\F(D^2p,g(0,e_d)e_d+\nabla p,x)-\F(D^2p,g(0,e_d)e_d+\nabla p,0)\\&\qquad+\F(D^2p,g(0,e_d)e_d+\nabla p,0)-f(x)|\\&=|\F(D^2p,g(0,e_d)e_d+\nabla p,x)-\F(D^2p,g(0,e_d)e_d+\nabla p,0)\\&\qquad+\F(D^2p,g(0,e_d)e_d+\nabla p,0)-f(x)
					\\&\qquad\qquad+f(0)-\F(D^2p,g(0,e_d)e_d,0)|\\& \le
					|\F(D^2p,g(0,e_d)e_d+\nabla p,x)-\F(D^2p,g(0,e_d)e_d+\nabla p,0)|\\&\qquad+
					|\F(D^2p,g(0,e_d)e_d+\nabla p,0)-\F(D^2p,g(0,e_d)e_d,0)|\\&\qquad\qquad+|f(x)-f(0)|\\&\le Cr^{1+\beta},    
				\end{align*} for every $x\in B_1$, where we used \ref{eq:vicinanza-F1} and \ref{eq:vicinanza-f}.
			\end{proof}
			
			\begin{lemma}\label{rem:bound}
				Suppose that the hypotheses \ref{eq:vicinanza-g1}, \ref{eq:vicinanza-p} hold. Let $u:B_1\to \R$ be a non-negative continuous viscosity solution to \eqref{def:def-viscosity-solution} such that $$\left(g(0,e_d)x_d+p(x)-r^{1+\alpha}\right)_+\le u(x)\le\left(g(0,e_d)x_d+p(x)+r^{1+\alpha}\right)_+\quad\text{for every}\quad x\in B_1.$$ 
				Then $$|\nabla p(x_0)\cdot \tau (e_d) -\nabla_x g(0,e_d)\cdot x_0|\le Cr^{2}\quad\text{for every}\quad x_0\in\partial\Omega_u\cap B_1,$$ for some universal constant $C>0$.
			\end{lemma}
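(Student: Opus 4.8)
The idea is that $p$ satisfies the oblique condition in \eqref{def:sol-pol} \emph{exactly} on the hyperplane $\{x_d=0\}$, while every free boundary point $x_0\in\partial\Omega_u\cap B_1$ lies within distance $Cr$ of that hyperplane; since $p$ is quadratic, $\nabla p$ is affine, so the $O(r)$ displacement from $x_0$ to the hyperplane only produces an $O(r^2)$ error. I would localize the free boundary first. If $x_0\in\partial\Omega_u\cap B_1$ then $u(x_0)=0$, so the lower flatness bound forces $g(0,e_d)(x_0)_d+p(x_0)-r^{1+\alpha}\le 0$; on the other hand $x_0\in\overline\Omega_u$ is a limit of points where $u>0$, and passing to the limit in the upper flatness bound gives $g(0,e_d)(x_0)_d+p(x_0)+r^{1+\alpha}\ge 0$. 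Hence $|g(0,e_d)(x_0)_d+p(x_0)|\le r^{1+\alpha}$, and using $\|p\|_{L^\infty(B_1)}\le r$ together with $g(0,e_d)\ge\gamma_0>0$ from \ref{hyp:h1} (which is part of \ref{eq:vicinanza-g1}), I obtain $|(x_0)_d|\le Cr$ for a universal $C$, provided $r\le 1$.

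Next I would record two elementary bounds. Writing $p(x)=Mx\cdot x$ and comparing the values of $p$ at $e_i$ and at $e_i\pm e_j$, one gets $|M|\le Cr$ from $\|p\|_{L^\infty(B_1)}\le r$; in particular $\nabla p$ is affine with $|\nabla p(x)-\nabla p(y)|\le Cr|x-y|$. Moreover, from the definition \eqref{def:tau} of $\tau(e_d)$, the lower bound $g(0,e_d)\ge\gamma_0$, and the boundedness of $\nabla_\theta g(0,e_d)$ (a universal quantity, being controlled by $\|g\|$), one has $|\tau(e_d)|\le C$ universal, exactly as in \cref{prop:c2alphaestimate}.

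Finally I would project. Set $\bar x_0:=(x_0',0)$, which still lies in $B_1\cap\{x_d=0\}$ and satisfies $|x_0-\bar x_0|=|(x_0)_d|\le Cr$. Since $p\in\mathcal{P}(e_d,\F,f,g)$, the oblique condition in \eqref{def:sol-pol} gives $\nabla p(\bar x_0)\cdot\tau(e_d)=\nabla_xg(0,e_d)\cdot\bar x_0$, so
\[
\nabla p(x_0)\cdot\tau(e_d)-\nabla_xg(0,e_d)\cdot x_0
=\big(\nabla p(x_0)-\nabla p(\bar x_0)\big)\cdot\tau(e_d)-(x_0)_d\,\big(\nabla_xg(0,e_d)\cdot e_d\big).
\]
The first term is bounded by $|\nabla p(x_0)-\nabla p(\bar x_0)|\,|\tau(e_d)|\le Cr\cdot Cr\cdot C=Cr^2$, and the second by $|(x_0)_d|\,\|\nabla_xg(0,\cdot)\|_{L^\infty(\mathbb{S}^{d-1})}\le Cr\cdot r=Cr^2$, the last factor being the fourth inequality in \ref{eq:vicinanza-g1}. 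This yields $|\nabla p(x_0)\cdot\tau(e_d)-\nabla_xg(0,e_d)\cdot x_0|\le Cr^2$, as claimed. The only mildly delicate points are verifying that the constants in the localization step $|(x_0)_d|\le Cr$ and in the coefficient bound $|M|\le Cr$ are genuinely universal, and observing that it is precisely the affine structure of $\nabla p$ (equivalently, the bound $|M|\le Cr$ rather than just $|M|\le C$) that upgrades the $O(r)$ distance from $x_0$ to the hyperplane into the required $O(r^2)$ error.
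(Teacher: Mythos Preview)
Your proof is correct and follows essentially the same route as the paper's: project $x_0$ to $\bar x_0=(x_0',0)$, use the oblique boundary condition for $p$ exactly at $\bar x_0$, and control the two error terms via $|(x_0)_d|\le Cr$, $\|D^2p\|\le Cr$, and $\|\nabla_x g(0,\cdot)\|\le r$. Your write-up is in fact slightly more explicit than the paper's in justifying the auxiliary bounds $|M|\le Cr$ and $|\tau(e_d)|\le C$, but the argument is the same.
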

			\begin{proof} Notice that the flatness hypothesis can be rewritten as $$|u(x)-g(0,e_d)x_d-p(x)|\le r^{1+\alpha}\quad\text{for every}\quad x\in\overline \Omega_u\cap B_1. $$ Let $x_0\in\partial\Omega_u\cap B_1$ and $\overline x_0:=x_0-(x_0)_d\in B_1'$ be the projection of $x_0$ on $B_1'$.
				Since $u(x_0)=0$, by \ref{hyp:h1} and \ref{eq:vicinanza-p} we have that $$|x_0-\overline x_0|=|(x_0)_d|\le \frac{2r}{g(0,e_d)}\le Cr,$$ for some universal constant $C>0$.
				Then
				\begin{align*}|\nabla p(x_0)\cdot \tau(e_d)-\nabla_x g(0,e_d)\cdot x_0|&\le |\nabla p(x_0)\cdot \tau(e_d)-\nabla p(\overline x_0)\cdot \tau(e_d)|\\&\qquad+|\nabla p(\overline x_0)\cdot \tau(e_d)-\nabla_x g(0,e_d)\cdot x_0|\\
					&=|\nabla p(x_0)\cdot \tau(e_d)-\nabla p(\overline x_0)\cdot \tau(e_d)|\\&\qquad+|\nabla_xg(0,e_d)\cdot \overline x_0-\nabla_x g(0,e_d)\cdot x_0|
					\\&\le C\|D^2p\|_{L^\infty(B_1)} |x_0-\overline x_0|+|\nabla_x g(0,e_d)||x_0-\overline x_0|
					\\&\le Cr^{2},\end{align*} where we used \ref{eq:vicinanza-g1} and \ref{eq:vicinanza-p}.
			\end{proof}
			\begin{lemma}\label{lemma:definitive} 
           Suppose that the hypotheses \ref{eq:vicinanza-g1}, \ref{eq:vicinanza-p} hold.
            Let $u:B_1\to \R$ be a non-negative continuous viscosity solution to \eqref{def:def-viscosity-solution} such that $$\left(g(0,e_d)x_d+p(x)-r^{1+\alpha}\right)_+\le u(x)\le\left(g(0,e_d)x_d+p(x)+r^{1+\alpha}\right)_+\quad\text{for every}\quad x\in B_1.$$ Let $$v(x):=g(0,e_d)x_d+p(x)+r^{1+\alpha}q(x),$$ for some function $q\in C^1(B_1)$, then, for every $x_0\in \partial\Omega_u\cap B_1$, we have
            \bea g\left(x_0,\frac{\nabla v(x_0)}{|\nabla v(x_0)|}\right)&=g(0,e_d)+\partial_{x_d}p(x_0)+r^{1+\alpha}\partial_{x_d}q(x_0)-r^{1+\alpha}\nabla q(x_0)\cdot \tau(e_d,g)
           +O(r^{1+\beta}),\eea where $\tau(\cdot,\cdot)$ is defined in \eqref{def:tau}.
			\end{lemma}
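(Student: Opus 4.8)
The plan is to perform a two-variable Taylor expansion of $g$ around the point $(0,e_d)$ — first in the spatial variable, then in the angular variable — and to recognize the vector $\tau(e_d,g)$ from \eqref{def:tau} inside the resulting algebra, using \cref{rem:bound} to absorb the leftover first-order-in-$x$ term.

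First I would record the elementary facts about the competitor. Since $p$ is a quadratic polynomial with $\|p\|_{L^\infty(B_1)}\le r$ by \ref{eq:vicinanza-p}, interior estimates for polynomials give $\|\nabla p\|_{L^\infty(B_1)}+\|D^2p\|_{L^\infty(B_1)}\le Cr$; combining this with a bound on $\|q\|_{C^1(B_1)}$ one gets, for $x_0\in\partial\Omega_u\cap B_1$,
$$\nabla v(x_0)=g_0\,e_d+w_0,\qquad g_0:=g(0,e_d),\qquad w_0:=\nabla p(x_0)+r^{1+\alpha}\nabla q(x_0),\qquad |w_0|\le Cr.$$
By \ref{eq:vicinanza-g1} we have $g_0\ge\gamma_0>0$, so $|\nabla v(x_0)|\ge\gamma_0-Cr>0$ for $r$ small, and $\nu_1:=\nabla v(x_0)/|\nabla v(x_0)|$ is well defined. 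From $|\nabla v(x_0)|^2=g_0^2+2g_0(w_0\cdot e_d)+|w_0|^2$ one obtains $|\nabla v(x_0)|=g_0+(w_0\cdot e_d)+O(r^2)$, hence
$$\nu_1=e_d+\frac{w_0-(w_0\cdot e_d)e_d}{g_0}+O(r^2),\qquad |\nu_1-e_d|\le Cr,$$
i.e.\ $\nu_1-e_d$ equals $g_0^{-1}$ times the projection of $w_0$ onto $\{x_d=0\}$, up to $O(r^2)$.

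Then I would invoke the flatness hypothesis \ref{eq:vicinanza-g1} at $\nu_0=e_d$, with $\nu_1$ as above and $x=x_0$; since $|\nu_1-e_d|\le Cr$ this gives
$$g(x_0,\nu_1)=g_0+\nabla_xg(0,e_d)\cdot x_0+\nabla_\theta g(0,e_d)\cdot(\nu_1-e_d)+O(r^{1+\beta}).$$
The key algebraic step is to rewrite the angular term: inserting the expansion of $\nu_1-e_d$ and using \eqref{def:tau}, one checks the identity
$$\nabla_\theta g(0,e_d)\cdot\frac{w_0-(w_0\cdot e_d)e_d}{g_0}=(w_0\cdot e_d)-\tau(e_d,g)\cdot w_0,$$
which is just a rearrangement of $\tau(e_d,g)=e_d+g_0^{-1}\big(\nabla_\theta g(0,e_d)\cdot e_d\big)e_d-g_0^{-1}\nabla_\theta g(0,e_d)$. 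Since $\nabla_\theta g(0,e_d)$ is bounded, and $w_0\cdot e_d=\partial_{x_d}p(x_0)+r^{1+\alpha}\partial_{x_d}q(x_0)$, $\tau(e_d,g)\cdot w_0=\nabla p(x_0)\cdot\tau(e_d,g)+r^{1+\alpha}\nabla q(x_0)\cdot\tau(e_d,g)$, this yields
\begin{align*}
\nabla_\theta g(0,e_d)\cdot(\nu_1-e_d)&=\partial_{x_d}p(x_0)+r^{1+\alpha}\partial_{x_d}q(x_0)-\nabla p(x_0)\cdot\tau(e_d,g)\\
&\quad-r^{1+\alpha}\nabla q(x_0)\cdot\tau(e_d,g)+O(r^2).
\end{align*}
Finally, I would plug this into the previous display and use \cref{rem:bound}, which states exactly that $\nabla_xg(0,e_d)\cdot x_0-\nabla p(x_0)\cdot\tau(e_d,g)=O(r^2)$: the spatial term $\nabla_xg(0,e_d)\cdot x_0$ and the term $\nabla p(x_0)\cdot\tau(e_d,g)$ then cancel up to $O(r^2)=O(r^{1+\beta})$ (since $\beta\le1$), leaving precisely
$$g(x_0,\nu_1)=g_0+\partial_{x_d}p(x_0)+r^{1+\alpha}\partial_{x_d}q(x_0)-r^{1+\alpha}\nabla q(x_0)\cdot\tau(e_d,g)+O(r^{1+\beta}).$$

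The computation is routine once it is set up; the two points deserving care are the bookkeeping that turns the projected gradient $g_0^{-1}\big(w_0-(w_0\cdot e_d)e_d\big)$ into $\tau(e_d,g)$, and the realization that the leftover term $\nabla_xg(0,e_d)\cdot x_0$ — which is only $O(r)$, not small enough on its own — is controlled exactly by the free-boundary compatibility estimate of \cref{rem:bound}. Apart from that, one only has to keep track of which errors are $O(r^2)$, which are $O(r^{1+\beta})$, and which $r^{1+\alpha}$ contributions of $q$ must be kept explicit in the statement.
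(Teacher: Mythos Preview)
Your proof is correct and follows essentially the same route as the paper's: both compute $\nu_1-e_d$ as $g_0^{-1}$ times the tangential projection of $\nabla p+r^{1+\alpha}\nabla q$ up to $O(r^2)$, feed this into the Taylor expansion from \ref{eq:vicinanza-g1}, recognize the $\tau(e_d,g)$ structure, and then invoke \cref{rem:bound} at $x_0\in\partial\Omega_u$ to cancel the residual $\nabla_xg(0,e_d)\cdot x_0-\nabla p(x_0)\cdot\tau(e_d,g)$. The only difference is cosmetic: the paper expands $\partial_{x_j}v/|\nabla v|$ component by component, whereas you package everything in the vector $w_0$ and isolate the algebraic identity $\nabla_\theta g(0,e_d)\cdot g_0^{-1}(w_0-(w_0\cdot e_d)e_d)=(w_0\cdot e_d)-\tau(e_d,g)\cdot w_0$ explicitly.
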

			\begin{proof}
				We observe that, for every $j=1,\ldots,d-1$, we have $$\frac{\partial_{x_j} v}{|\nabla v|}=\frac{\partial_{x_j}p+r^{1+\alpha}\partial_{x_j}q}{\sqrt{(g(0,e_d)+\partial_{x_d}p+r^{1+\alpha}\partial_{x_d}q)^2+(\nabla_{x'}p+r^{1+\alpha}\nabla_{x'}q)^2}}=\frac{\partial_{x_j}p+r^{1+\alpha}\partial_{x_j}q}{g(0,e_d)}+O(r^2)$$ and $$\frac{\partial_{x_d} v}{|\nabla v|}=\frac{g(0,e_d)+\partial_{x_d}p+r^{1+\alpha}\partial_{x_d}q}{\sqrt{(g(0,e_d)+\partial_{x_d}p+r^{1+\alpha}\partial_{x_d}q)^2+(\nabla_{x'}p+r^{1+\alpha}\nabla_{x'}q)^2}}=1+O(r^2),$$ then  
				\bea & \frac{\nabla v}{|\nabla v|}-e_d=      
				\frac{1}{g(0,e_d)} (\nabla p+r^{1+\alpha}\nabla q-e_d\partial_{x_d}p-e_dr^{1+\alpha}\partial_{x_d}q)+O(r^{2}).
				\eea By \ref{eq:vicinanza-g1}, this implies that
                \bea g\left(x,\frac{\nabla v(x)}{|\nabla v(x)|}\right)&=g(0,e_d)+\nabla_xg(0,e_d)\cdot x+\\&\qquad+ \left(\frac{\nabla_\theta g(0,e_d)}{g(0,e_d)} \cdot(\nabla p(x)+r^{1+\alpha}\nabla q(x)-e_d\partial_{x_d}p(x)-e_dr^{1+\alpha}\partial_{x_d}q(x)
				\right)\\&\qquad\qquad+O(r^{1+\beta}),\eea 
                from which we get \bea g\left(x,\frac{\nabla v(x)}{|\nabla v(x)|}\right)&=g(0,e_d)+\nabla_xg(0,e_d)\cdot x+\partial_{x_d}p(x)-\nabla p(x)\cdot \tau(e_d,g)+r^{1+\alpha}\partial_{x_d}q(x)\\&\qquad-r^{1+\alpha}\nabla q(x)\cdot \tau(e_d,g)
           +O(r^{1+\beta}).\eea
            This is exactly the thesis if $x=x_0\in\partial\Omega_u\cap B_1$, by \cref{rem:bound}.
			\end{proof}
			\subsection{Contradiction assumptions}
			In order to prove \cref{t:improvement of flatness} suppose by contradiction that the hypotheses \ref{eq:vicinanza-F1}, \ref{eq:vicinanza-f}, \ref{eq:vicinanza-g1}, \ref{eq:vicinanza-p} hold for the sequences $\F_n$, $f_n$, $g_n$, $p_n$, $r_n$, with $r_n\to0^+$ and we suppose that there are $u_n:B_1\to \R$ non-negative continuous viscosity solutions to 
			\begin{equation}\label{def:def-viscosity-sol-n}\begin{cases}
					\F_n(D^2 u_n,\nabla u_n,x)=f_n(x) \quad&\text{in } \Omega_{u_n}\cap B_1,\\
					|\nabla u_n|=g_n(x,\nu) \quad&\text{on } \partial\Omega_{u_n}\cap B_1,\\
				\end{cases} 
                \end{equation} with $0\in\partial\Omega_{u_n}$,
			 such that $$\left(g_n(0,e_d)x_d+p_n(x)-r_n^{1+\alpha}\right)_+\le u(x)\le \left(g_n(0,e_d)x_d+p_n(x)+r_n^{1+\alpha}\right)_+\quad\text{for every}\quad x\in B_1,$$ but the conclusion of \cref{t:improvement of flatness} does not hold.
			We divide the proof of \cref{t:improvement of flatness} in three steps.
			\subsection{Step 1. Compactness}\label{section-compactness} In this subsection we prove that the linearized sequence 
			\be\label{e:def-of-u-tilde_k} \widetilde u_n(x):=\frac{u_n(x)-g_n(0,e_d)x_d-p_n(x)}{r_n^{1+\alpha}}\quad\text{for every}\quad x\in B_1\cap \overline\Omega_{u_n},\ee 
			converges uniformly in the compact subsets of $B_{1/2}^+\cup B_{1/2}'$ to some function $\widetilde u:B_{1/2}^+\cup B_{1/2}'\to[-1,1]$ and the graphs of $\widetilde u_n:B_1\cap \overline\Omega_{u_n}\to\R$ converge in the Hausdorff distance to the graph of $\widetilde u$.
			We deduce the compactness of $\widetilde u_n$ from the following partial Harnack inequality. 
			\begin{proposition}[Partial Harnack inequality]\label{prop:partial-harnack}
				There are universal constants $r_0>0$, $\delta>0$ and $c\in(0,1)$ such that the following holds. Suppose that the hypotheses \ref{eq:vicinanza-F1}, \ref{eq:vicinanza-f}, \ref{eq:vicinanza-g1}, \ref{eq:vicinanza-p} hold.
                Let $u:B_1\to \R$ be a non-negative continuous viscosity solution to \eqref{def:def-viscosity-solution} such that $$(g(0,e_d)x_d+p(x)+a_0)_+\le u(x)\le (g(0,e_d)x_d+p(x)+b_0)_+\quad\text{for every}\quad x\in B_1,$$ with $r^{1+\alpha}:=b_0-a_0\le r_0^{1+\alpha}$.
				Then there are $a_1$ and $b_1$, with $a_0\le a_1\le b_1\le b_0$, such that $$(g(0,e_d)x_d+p(x)+a_1)_+\le u(x)\le (g(0,e_d)x_d +p(x)+b_1)_+\quad\text{for every}\quad x\in B_{\delta},$$ with the estimate $b_1-a_1\le (1-c)(b_0-a_0).$
			\end{proposition}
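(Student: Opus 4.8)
The plan is to adapt De~Silva's partial Harnack scheme from \cite{DeSilva:FreeBdRegularityOnePhase} to the fully nonlinear operator and the $\nu$-dependent boundary condition. Write $v_0(x):=g(0,e_d)x_d+p(x)$, so that the hypothesis reads $(v_0+a_0)_+\le u\le(v_0+b_0)_+$ in $B_1$ with $b_0-a_0=r^{1+\alpha}$; since $\alpha<\beta$, every $O(r^{1+\beta})$ error appearing below — coming from \cref{lemma:another-estimate} (which says that $v_0$ solves the interior equation up to an $O(r^{1+\beta})$ error) and from \cref{lemma:definitive} — is $o(r^{1+\alpha})$ once $r_0$ is small. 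One may assume $\partial\Omega_u\cap B_{3/4}\neq\emptyset$, since otherwise $u$ is either identically zero or a positive solution of the interior equation in $B_{3/4}$ and the statement follows directly from the interior Harnack inequality applied to $u-v_0-a_0$. Then, as $\nabla v_0=g(0,e_d)e_d+\nabla p$ with $g(0,e_d)\ge\gamma_0$ and $|\nabla p|\le Cr$, the level sets of $v_0$ are uniformly Lipschitz graphs $O(r)$-close to $\{x_d=0\}$, one has $\{v_0+a_0>0\}\subset\Omega_u\subset\{v_0+b_0>0\}$ in $B_{3/4}$, the free boundary lies in the slab $\{|x_d|\le Cr\}$, and the point $\bar x:=\tfrac18 e_d$ satisfies $\mathrm{dist}(\bar x,\partial\Omega_u)\ge\tfrac1{16}$ for $r_0$ small.

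The core is a dichotomy on $u(\bar x)\in[v_0(\bar x)+a_0,\,v_0(\bar x)+b_0]$. If $u(\bar x)\ge v_0(\bar x)+\tfrac{a_0+b_0}{2}$ I would keep $b_1:=b_0$ and raise the lower level; the opposite case is symmetric (keep $a_1:=a_0$, lower the upper level, and run the argument on $v_0+b_0-u$), and in both cases $b_1-a_1=(1-c)(b_0-a_0)$. In the first case put $w:=u-v_0-a_0$. By \eqref{eq:pucci} and \cref{lemma:another-estimate}, $w\ge0$ on $\Omega_u\cap B_{3/4}$ and, in the viscosity sense there, $\mathcal M^-(D^2w)-[\F]_{\text{Lip}(\R^d)}|\nabla w|\le Cr^{1+\beta}$ and $\mathcal M^+(D^2w)+[\F]_{\text{Lip}(\R^d)}|\nabla w|\ge-Cr^{1+\beta}$, so $w$ belongs to a Pucci class and the interior Harnack inequality \cref{prop:interiorharnack} applies. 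Since $w(\bar x)\ge\tfrac12 r^{1+\alpha}$ and $B_{2\sigma}(\bar x)\subset\subset\Omega_u$ for a small universal $\sigma$, it yields $w\ge c_1 r^{1+\alpha}-Cr^{1+\beta}\ge\tfrac{c_1}{2}r^{1+\alpha}$ on $\overline{B_\sigma(\bar x)}$, i.e. $u\ge v_0+a_0+c_1 r^{1+\alpha}$ there.

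To propagate this gain to a fixed ball $B_\delta$ — including the points of $\partial\Omega_u\cap B_\delta$, which a priori might dip below the level $\{v_0=-a_1\}$ — I would use the barrier $\underline\Phi:=v_0+a_0+c_1 r^{1+\alpha}\phi$ on the annulus $\mathcal A:=B_{1/2}(\bar x)\setminus\overline{B_\sigma(\bar x)}\subset B_1$, where $\phi\in C^2(\overline{\mathcal A})$ is a fixed function with $0\le\phi\le1$, $\phi\equiv1$ on $\partial B_\sigma(\bar x)$, $\phi\equiv0$ on $\partial B_{1/2}(\bar x)$, $|\nabla\phi|\neq0$, $\mathcal M^-(D^2\phi)-[\F]_{\text{Lip}(\R^d)}|\nabla\phi|\ge\kappa>0$ on $\mathcal A$ (as in \cref{lemma:barrier1}) and, crucially, $\nabla\phi\cdot\tau(e_d,g)\ge\kappa'>0$ on $\mathcal A\cap\{|x_d|\le Cr\}$ — this last property forcing $\phi$ to be, near the slab containing the free boundary, essentially a strictly convex increasing function of $x_d$ alone (so that $\nabla_{x'}\phi\approx0$ there), patched away from it to a barrier of the type in \cref{lemma:barrier1}. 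Then $\underline\Phi_+$ is a strict subsolution of \eqref{def:def-viscosity-solution}, uniformly in $r\le r_0$: the interior inequality $\F(D^2\underline\Phi,\nabla\underline\Phi,x)>f$ follows from \eqref{eq:pucci}, \cref{lemma:another-estimate} and $c_1\kappa r^{1+\alpha}\gg r^{1+\beta}$, and the strict free boundary inequality on $\partial\{\underline\Phi>0\}$ from the expansion in \cref{lemma:definitive} with $q=c_1\phi$. On $\partial\mathcal A$ one has $\underline\Phi\le u$ ($\phi=0$ on $\partial B_{1/2}(\bar x)$ and $u\ge v_0+a_0$ everywhere; $\phi=1$ on $\partial B_\sigma(\bar x)$ and $u\ge v_0+a_0+c_1 r^{1+\alpha}$ there), so the comparison principle for \eqref{def:def-viscosity-solution} gives $\underline\Phi_+\le u$ on $\mathcal A$; since $\phi\ge\phi_0>0$ on $\overline{B_\delta}\subset\mathcal A$, this yields $u\ge(v_0+a_1)_+$ on $B_\delta$ with $a_1:=a_0+c_1\phi_0 r^{1+\alpha}$, completing the proof with $c:=c_1\phi_0\in(0,1)$.

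The main obstacle is precisely this last step: constructing the fixed barrier $\phi$ with all the listed properties simultaneously — in particular the sign of $\nabla\phi\cdot\tau(e_d,g)$ near the free boundary, uniformly over the admissible $g$ — and proving that $\underline\Phi_+$ is a strict free boundary subsolution with constants independent of $r\le r_0$, which is exactly where the oblique condition defining $\mathcal P(e_d,\F,f,g)$ and the estimates of \cref{rem:bound} and \cref{lemma:definitive} enter. The comparison itself must also be run carefully for a solution that is only continuous up to $\partial\Omega_u$: one examines the point $z^*$ where $\underline\Phi-u$ attains its largest value on $\overline{\mathcal A}$ and rules out $z^*\in\partial\mathcal A$ (boundary ordering), $z^*\in\Omega_u\cap\{\underline\Phi>0\}$ (the strict interior inequality contradicts $u$ being a supersolution of the interior equation), and $z^*\in\partial\Omega_u$ (there $\underline\Phi-\underline\Phi(z^*)+u(z^*)$ is a $C^1$ test function with nonzero gradient touching $u$ from below, so the free boundary supersolution condition gives $|\nabla\underline\Phi(z^*)|\le g\bigl(z^*,\tfrac{\nabla\underline\Phi(z^*)}{|\nabla\underline\Phi(z^*)|}\bigr)$, contradicting the strict inequality built into $\underline\Phi$ via \cref{lemma:definitive}).
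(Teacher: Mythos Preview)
Your overall strategy is exactly the paper's: dichotomy at a fixed interior point $\bar x$ above the free boundary, interior Harnack on $u-P$ (with $P=v_0+a_0$) to get a gain $cr^{1+\alpha}$ on a small ball around $\bar x$, then a barrier/sliding argument to propagate this gain to $B_\delta$, using \cref{lemma:another-estimate} for the interior and \cref{lemma:definitive} for the free boundary. The difference is entirely in the barrier, and there you make life much harder than necessary.

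You describe the barrier construction as ``the main obstacle'' and propose a patched function that is ``essentially a strictly convex increasing function of $x_d$ alone'' near the slab containing $\partial\Omega_u$, glued to a radial barrier elsewhere. As stated this cannot quite work: a function of $x_d$ alone is constant on horizontal hyperplanes, so it cannot simultaneously vanish on the curved outer sphere $\partial B_{1/2}(\bar x)$ and be strictly positive inside the annulus at the same height. More to the point, no patching is needed. The paper takes the \emph{same} radial barrier
\[
\psi(x)=\overline c\big(|x-\bar x|^{-\gamma}-\rho^{-\gamma}\big)
\]
on $B_\rho(\bar x)\setminus\overline{B_{1/20}(\bar x)}$ (so $\mathcal M^-(D^2\psi)\ge c_0$ for large $\gamma$) and simply observes that its gradient $\nabla\psi(x)=\overline c\,\gamma|x-\bar x|^{-\gamma-2}(\bar x-x)$ points toward the center $\bar x=\tfrac15 e_d$. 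At points $x$ near the origin (hence near the free boundary), $\bar x-x\approx\tfrac15 e_d$, so
\[
\nabla\psi(x)\cdot\tau(e_d,g)=\overline c\,\gamma|x-\bar x|^{-\gamma-2}\Big(\tfrac15-x_d-x\cdot\tfrac{\nabla_{\theta'}g(0,e_d)}{g(0,e_d)}\Big)\ge c_0>0
\]
on a fixed ball $B_\eta$, and one then chooses $\delta$ small enough that the portion of the annulus lying in $\{x_d<2\delta\}$ is contained in $B_\eta$. In other words, the oblique condition $\nabla\psi\cdot\tau\ge c_0$ near the free boundary comes for free from the geometry (the center sits above the slab), and the argument you set up with \cref{lemma:definitive} then goes through verbatim.

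Two smaller remarks. The paper reduces to the case $|a_0|<2\delta$ rather than to $\partial\Omega_u\cap B_{3/4}\neq\emptyset$; this is what guarantees that any touching point on $\partial\Omega_u$ lies in the region where the oblique estimate for $\psi$ is available. And instead of a single comparison on $\overline{\mathcal A}$, the paper runs the usual sliding family $v_t=P+cr^{1+\alpha}\psi-cr^{1+\alpha}+cr^{1+\alpha}t$, $t\in[0,1)$, and rules out the first touching point; this is equivalent to your max-point argument but packages the two cases (interior and free boundary) more cleanly.
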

			\begin{proof}
				Let $\delta\in(0,1/10)$ to be chosen later.
				Notice that it is sufficient to prove the partial Harnack inequality in the case $|a_0|< 2\delta$, by the interior Harnack inequality in \cref{prop:interiorharnack}. 
				Let $\overline x:=1/5 {e_d}$, $\rho:=1/5+3\delta$ and
				$$\psi(x):=\begin{cases}
					1 \quad&\text{if }x\in B_{1/20}(\overline x),\\
					\overline c\left(|x-\overline x|^{-\gamma}-\rho^{-\gamma}\right)\quad&\text{if }x\in B_{\rho}(\overline x)\setminus \overline B_{1/20}(\overline x),\\
					0\quad&\text{if }x \in \R^d\setminus \overline B_{\rho}(\overline x),
				\end{cases}$$ 
				with $\overline c>0$ chosen in such a way to have that $\psi$ is continuous. 
				Notice that, if $x\in B_{\rho}(\overline x)\setminus \overline B_{1/20}(\overline x)$, in an appropriate system of coordinates, we have
				$$D^2\psi(x)=\overline c\gamma |x-\overline x|^{-\gamma-2}\text{diag}(\gamma+1,-1,\ldots,-1),$$ then, by \eqref{eq:pucci}, we have that \be\label{eq:subsol}\mathcal{M}^-(D^2\psi)=\overline c\gamma |x-\overline x|^{-\gamma-2}\left(\lambda(\gamma+1)-\Lambda(d-1)\right)\ge c_0>0,\ee for some universal constant $c_0>0$, if we choose $\gamma>\max\{\frac{\Lambda(d-1)}\lambda-1,0\}$.
				Moreover, if $\tau(\cdot,\cdot)$ is as in \eqref{def:tau}, by \ref{hyp:h1} we can choose a universal $\eta>0$ such that 
				\bea \nabla \psi(x)\cdot \tau(e_d,g)&=\overline c\gamma|x-\overline x|^{-\gamma-2}(\overline x-x)\cdot \left(e_d+\frac{\nabla_{\theta'}g(0,e_d)}{g(0,e_d)}\right)\\
				&= \overline c\gamma|x-\overline x|^{-\gamma-2}\left(\frac15-x_d-x\cdot\frac{\nabla_{\theta'}g(0,e_d)}{g(0,e_d)}\right)\\&\ge c_0
				\eea for every $x\in B_\eta$, up to choose $c_0$ small enough. In particular, we choose a universal $\delta>0$ such that $$(B_{\rho}(\overline x)\setminus \overline B_{1/20}(\overline x))\cap \{x_d<2\delta\}\subset B_\eta,$$
				Then
				\be\label{e:stima-per-w}\nabla \psi(x)\cdot \tau(e_d,g)\ge c_0\quad\text{for every}\quad x\in(B_{\rho}(\overline x)\setminus \overline B_{1/20}(\overline x))\cap \{x_d<2\delta\}.\ee 
				We consider the polynomial $$P(x):=g(0,e_d)x_d+p(x)+a_0$$ and suppose that $u(\overline x)\ge P(\overline x)+r^{1+\alpha}/2$ (the other case is similar). Then, since $|a_0|\le 1/10$ and by \ref{eq:vicinanza-F1}, we have that 
				$u-P$ is a solution to a fully nonlinear elliptic equation in $ B_{1/10}(\overline x)$, with operator $$\overline \F(M,\xi,x):=\F(D^2P+M,\nabla P+\xi,x)-\F(D^2P,\xi,x)\in\EE$$ and right-hand side \begin{align*}\overline f(x)&=\overline \F(D^2u-D^2P,\nabla u-\nabla P,x)=\F(D^2u,\nabla u,x)-\F(D^2p,g(0,e_d)e_d+\nabla p,x)\\&=f(x)-\F(D^2p,g(0,e_d)e_d+\nabla p,x)
				\end{align*}
				by \ref{eq:vicinanza-p}. Then we have that $$\|\overline f\|_{L^\infty(B_{1/10}(\overline x))}\le Cr^{1+\beta},$$ by \cref{lemma:another-estimate}.
				We deduce that $$u-P\ge \widetilde cr^{1+\alpha}-C\|\overline f\|_{L^\infty(B_{1/10}(\overline x))}\ge cr^{1+\alpha}\quad\text{in } B_{1/20}(\overline x),$$ by the interior Harnack inequality in \cref{prop:interiorharnack}. 

				We define the competitors $$v_{t}(x):=P(x)+ c r^{1+\alpha} \psi(x)- cr^{1+\alpha}+ cr^{1+\alpha} t\quad\text{for every} \quad t\in[0,1),$$
				and we suppose by contradiction that there is $t\in[0,1)$ and $x_0\in B_{\rho}(\overline x)\setminus \overline B_{1/20}(\overline x)$ such that $v_{t}$ touches from below $u$ in $x_0$. 
				Suppose that $x_0\in\Omega_u$. Notice that if $r_0>0$ is small enough, by \cref{lemma:another-estimate} and \ref{eq:vicinanza-F1}, we get \begin{align*}\F(D^2 v_t(x_0),\nabla v_t(x_0),x_0)&=\F(D^2 P+cr^{1+\alpha}D^2\psi(x_0),\nabla P+cr^{1+\alpha}\nabla \psi(x_0),x_0)\\&
					\ge \F(D^2P+cr^{1+\alpha}D^2\psi(x_0),\nabla P,x_0)-Cr^{2+\alpha}
					\\&\ge \F(D^2P,\nabla P,x_0)+cr^{1+\alpha}\mathcal{M}^-(D^2\psi(x_0))-Cr^{2+\alpha}
					\\&
					= \F(D^2p,g(0,e_d)e_d+\nabla p,x_0)+cr^{1+\alpha}c_0-Cr^{2+\alpha}
					\\&\ge f(x_0)-Cr^{1+\beta}+cr^{1+\alpha}c_0-Cr^{2+\alpha}\\
					&>f(x_0)\end{align*} where we used \eqref{eq:pucci} and \eqref{eq:subsol}. This is a contradiction with \cref{def:def-sol}. Then $x_0\in\partial\Omega_u$ and $x_0\in(B_{\rho}(\overline x)\setminus \overline B_{1/20}(\overline x))\cap \{x_d<2\delta\}$, since $|a_0|< 2\delta$.
				Using \eqref{e:stima-per-w} and \cref{lemma:definitive}, we have that 
                \bea \partial_{x_d}v_t(x_0)&=g(0,e_d)+\partial_{x_d}p(x_0)+cr^{1+\alpha}\partial_{x_d}\psi(x_0)\\&=g(0,e_d)+\partial_{x_d}p(x_0)+cr^{1+\alpha}\partial_{x_d}\psi(x_0)- cr^{1+\alpha}\nabla \psi(x_0)\cdot\tau(e_d,g)\\&\qquad+cr^{1+\alpha}\nabla \psi(x_0)\cdot\tau(e_d,g)
                \\&\ge                g\left(x_0,\frac{\nabla v_t(x_0)}{\nabla v_t(x_0)|}\right)-Cr^{1+\beta}+cr^{1+\alpha}\nabla \psi(x_0)\cdot \tau(e_d,g)\\&\ge g\left(x_0,\frac{\nabla v_t(x_0)}{\nabla v_t(x_0)|}\right)-Cr^{1+\beta}+cc_0r^{1+\alpha}\\&>g\left(x_0,\frac{\nabla v_t(x_0)}{\nabla v_t(x_0)|}\right),
                \eea
			for $r_0>0$ small enough.
			This is a contradiction with \cref{def:def-sol}.
			
			It follows that $u\ge v_{1}=P+cr^{1+\alpha}\psi$ in $B_1$, which concludes the proof, since $\psi$ is bounded from below in $ B_{\delta}$.
		\end{proof}
		By the partial Harnack inequality we deduce the convergence of the sequence $\widetilde u_n$ defined in \eqref{e:def-of-u-tilde_k}, which is the following corollary.
        
		\begin{corollary}[Convergence of $\widetilde u_n$]\label{corollary:convergence-uniform-hausdorff}
        Suppose that the hypotheses \ref{eq:vicinanza-F1}, \ref{eq:vicinanza-f}, \ref{eq:vicinanza-g1}, \ref{eq:vicinanza-p} hold for the sequences $\F_n$, $f_n$, $g_n$, $p_n$, $r_n$, with $r_n\to0^+$.
        Let $u_n:B_1\to \R$ be non-negative continuous viscosity solutions to \eqref{def:def-viscosity-sol-n} such that $$\left(g_n(0,e_d)x_d+p_n(x)-r_n^{1+\alpha}\right)_+\le u_n(x)\le \left(g_n(0,e_d)x_d+p_n(x)+r_n^{1+\alpha}\right)_+\quad\text{for every}\quad x\in B_1.$$ 
			Let $\widetilde u_n$ the linearized sequence in \eqref{e:def-of-u-tilde_k},
			then there is an H\"older function $ \widetilde u:B_{1/2}^+\cup B_{1/2}'\to[-1,1]$ such that, up to subsequences, the following holds.
			\begin{itemize}
				\item For any $\delta>0$, the sequence $\widetilde u_n$ converges to $\widetilde u$ uniformly in $\{x_d\ge\delta\}$.\smallskip
				\item The sequence of graphs of $\widetilde u_n$ $$\Gamma_n:=\{(x,\widetilde u_n(x)): x\in B_{1/2}\cap\overline\Omega_{u_n}\}$$ converges in Hausdorff distance to the graph of $\widetilde u$ $$\Gamma:=\{(x,\widetilde u(x)): x\in B_{1/2}^+\cup B_{1/2}'\}.$$
			\end{itemize}
		\end{corollary}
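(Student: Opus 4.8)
\emph{Approach and Step 1 (oscillation decay of $\widetilde u_n$).} This is the standard compactness output of a partial Harnack inequality in the De Silva scheme, so the plan is to follow \cite{DeSilva:FreeBdRegularityOnePhase} and its quadratic variant \cite{dfs-higher-order-two-phase}, using \cref{prop:partial-harnack} and the interior Harnack inequality \cref{prop:interiorharnack} in place of the linear estimates there. Denote by $\delta_0\in(0,1)$ and $c\in(0,1)$ the universal constants of \cref{prop:partial-harnack}. I would iterate that proposition along the dyadic scales $\delta_0^k$ by passing to the rescalings $u_{n,\delta_0^k}(x):=\delta_0^{-k}u_n(\delta_0^k x)$ (and, near a free boundary point $y_0\in\partial\Omega_{u_n}\cap B_{1/4}$, to their translates): since $u_n$ is trapped between $(g_n(0,e_d)x_d+p_n(x)\pm r_n^{1+\alpha})_+$, each such rescaling is trapped between the corresponding translates of $g_n(0,e_d)x_d+\delta_0^k p_n(x)$ and, together with the rescaled operator $\F_{n,\delta_0^k}$, datum $f_{n,\delta_0^k}$, free boundary condition $g_{n,\delta_0^k}$ and the rescaled polynomial $\delta_0^k p_n\in\mathcal P(e_d,\F_{n,\delta_0^k},f_{n,\delta_0^k},g_{n,\delta_0^k})$, it still satisfies the structural hypotheses \ref{eq:vicinanza-F1}--\ref{eq:vicinanza-p} with a parameter $r_{n,k}=C^k r_n$ that stays $\le r_0$ for all $k\lesssim\log(1/r_n)$; here one uses the stability of the hypotheses under rescaling (cf.\ \cref{lemma:rate}). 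Applying \cref{prop:partial-harnack} $k$ times therefore gives, for every such $k$,
\[
\big(g_n(0,e_d)x_d+p_n(x)+a_k\big)_+\le u_n(x)\le\big(g_n(0,e_d)x_d+p_n(x)+b_k\big)_+\quad\text{on }B_{\delta_0^k}(y_0),
\]
with $b_k-a_k\le 2(1-c)^k r_n^{1+\alpha}$. Rewriting this in terms of the linearized functions $\widetilde u_n$ from \eqref{e:def-of-u-tilde_k} gives $\|\widetilde u_n\|_{L^\infty}\le 1$ and shows that $\widetilde u_n$ has oscillation at most $C(1-c)^k$ over $B_{\delta_0^k}(y_0)\cap\overline\Omega_{u_n}$; hence, for $n$ large, the $\widetilde u_n$ are equi-H\"older continuous on every fixed compact subset of $B_{1/2}^+\cup B_{1/2}'$ with the universal exponent $\alpha'=\log(1-c)/\log\delta_0$ (at deep-interior points one uses \cref{prop:interiorharnack} directly).

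\emph{Step 2 (compactness and Hausdorff convergence of graphs).} By Step 1 and the Ascoli-Arzel\'a theorem, along a subsequence the $\widetilde u_n$ converge uniformly on $\{x_d\ge\delta\}\cap B_{1/2}$ for every $\delta>0$ --- here I use that the flatness assumption together with \ref{hyp:h1} forces $\partial\Omega_{u_n}\cap B_{1/2}\subset\{|x_d|\le Cr_n\}$, so that $\{x_d\ge\delta\}\cap B_{1/2}\subset\Omega_{u_n}$ for $n$ large --- and a diagonal argument over $\delta\to 0$ produces a limit $\widetilde u$ on $B_{1/2}^+$; the equi-H\"older bound of Step 1 passes to the limit, extends $\widetilde u$ continuously to $B_{1/2}'$, and gives $\widetilde u(B_{1/2}^+\cup B_{1/2}')\subset[-1,1]$. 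For the convergence of the graphs: the domains $B_{1/2}\cap\overline\Omega_{u_n}$ converge to $B_{1/2}^+\cup B_{1/2}'$ in the Hausdorff distance because $\partial\Omega_{u_n}\to\{x_d=0\}$, and, combining this with the common modulus of continuity of the $\widetilde u_n$ up to $\partial\Omega_{u_n}$ (Step 1) and the uniform convergence on each $\{x_d\ge\delta\}$, every point of $\Gamma$ is a limit of points of $\Gamma_n$ and conversely every point of $\Gamma_n$ lies within $o(1)$ of $\Gamma$; this is exactly $\Gamma_n\to\Gamma$ in the Hausdorff distance.

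\emph{Main obstacle.} The only genuinely technical point is Step 1: setting up the iteration of \cref{prop:partial-harnack} rigorously, i.e.\ checking that the hypotheses \ref{eq:vicinanza-F1}--\ref{eq:vicinanza-p} (including membership in the polynomial class $\mathcal P$) survive, with a controlled parameter, the rescalings $u_{n,\delta_0^k}$ and the translations to free boundary points, and keeping track of how many iteration steps are admissible before $r_{n,k}$ reaches $r_0$ --- which is harmless since we let $n\to\infty$ first. Once this bookkeeping is done the oscillation decay is immediate, and Step 2 is soft (Ascoli-Arzel\'a plus elementary Hausdorff-distance estimates), exactly as in \cite{DeSilva:FreeBdRegularityOnePhase,dfs-higher-order-two-phase}.
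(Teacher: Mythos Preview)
Your proposal is correct and follows essentially the same approach as the paper: iterate \cref{prop:partial-harnack} at dyadic scales to obtain a H\"older-type oscillation bound for $\widetilde u_n$ (down to scales comparable to $r_n^{1+\alpha}/r_0^{1+\alpha}$), then invoke Ascoli--Arzel\`a and the standard Hausdorff-convergence argument from \cite{DeSilva:FreeBdRegularityOnePhase}. The paper is slightly terser about the rescaling bookkeeping you flag in your ``main obstacle'' --- it simply fixes $x_0\in\overline\Omega_{u_n}\cap B_{1/2}$, sets $\rho_j=\tfrac12\delta^j$, stops the iteration at the largest $m_n$ with $r_n^{1+\alpha}\le\tfrac12 r_0^{1+\alpha}\rho_{m_n}$, and quotes the partial Harnack ``repeatedly'' --- but the content is the same as what you describe.
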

		\begin{proof}
			For what concerns the first point, by Ascoli-Arzel\'a theorem it is sufficient to prove that the following H\"older-type estimate
            \be\label{hold-type-est}|\widetilde u_n(x)-\widetilde u_n(x_0)|\le C|x-x_0|^\gamma\quad\text{for every}\quad x\in \overline\Omega_{u_n}\cap(B_{1/2}(x_0)\setminus \overline B_{r^{1+\alpha}/r^{1+\alpha}_0}(x_0)),\ee for every $x_0\in \overline \Omega_{u_n}\cap B_{1/2}$ and for some universal constants $C>0$ and $\gamma\in(0,1)$.
            
            Let $r_0>0$, $\delta>0$ and $c\in(0,1)$ as in \cref{prop:partial-harnack}.	
            We fix $n\in\N$ and $x_0\in\overline\Omega_{u_n}\cap  B_{1/2} $. We set $\rho_j:=\frac12\delta^j$ for every $j\in\N$ and $m_n\in\N$ the larger number such that $r_n^{1+\alpha}\le \frac12r_0^{1+\alpha}\rho_{m_n}$. Then we can apply the partial Harnack in \cref{prop:partial-harnack} repeatedly to get that for every $j=0,\ldots,m_n$ $$(g_n(0,e_d)x_d+p_n(x)+a_{n,j})_+\le u_n(x)\le (g_n(0,e_d)x_d+p_n(x)+b_{n,j})_+\quad\text{for every}\quad x\in B_{\rho_j}(x_0),$$ for sequences $$-r_n^{1+\alpha}=:a_{n,0}\le\ldots\le  a_{n,m}\le b_{n,m}\le\ldots\le b_{n,0}:=r_n^{1+\alpha}$$ such that $b_{n,j}-a_{n,j}\le(1-c)^j(b_{n,0}-a_{n,0})$. Therefore, for every $j=0,\ldots,m_n$, we have that $$|\widetilde u_n(x)-\widetilde u_n(x_0)|\le 2(1-c)^j\quad\text{for every}\quad x\in \overline\Omega_{u_n}\cap B_{\rho_j}(x_0).$$ Then \eqref{hold-type-est} follows choosing $\rho_{j+1}<|x-x_0|\le \rho_j$.
		
        Once the first point was proved, the second point follows by standard arguments (see e.g. \cite{DeSilva:FreeBdRegularityOnePhase,Velichkov:RegularityOnePhaseFreeBd}).
        \end{proof}
		
		\subsection{Step 2. Linearized problem.}\label{section-linearized-problem} In this subsection we prove that the limit function $\widetilde u$ defined in \cref{corollary:convergence-uniform-hausdorff} is a solution to a fully nonlinear elliptic problem with oblique boundary condition.
		\begin{proposition}[Linearized problem]\label{lemma:linearized-problem} 
        Suppose that the hypotheses \ref{eq:vicinanza-F1}, \ref{eq:vicinanza-f}, \ref{eq:vicinanza-g1}, \ref{eq:vicinanza-p} hold for the sequences $\F_n$, $f_n$, $g_n$, $p_n$, $r_n$, with $r_n\to0^+$.
        Let $u_n:B_1\to \R$ be non-negative continuous viscosity solutions to \eqref{def:def-viscosity-sol-n}, with $0\in\partial\Omega_{u_n}$, such that $$\left(g_n(0,e_d)x_d+p_n(x)-r_n^{1+\alpha}\right)_+\le u_n(x)\le \left(g_n(0,e_d)x_d+p_n(x)+r_n^{1+\alpha}\right)_+\quad\text{for every}\quad x\in B_1.$$ 
			Then the limit function $\widetilde u$ defined in \cref{corollary:convergence-uniform-hausdorff} is a viscosity solution to the following problem \be\label{lineariz}\begin{cases}
				\widetilde \F(D^2\widetilde u)=0 \quad&\text{in }B_{1/2}^+,\\
				\nabla\widetilde u\cdot \tau=0\quad&\text{on }B_{1/2}',
			\end{cases}\ee
			where $\widetilde \F\in\EE$ satisfies \ref{hyp:h2} and $\tau\in\R^d$ satisfies $\tau\cdot e_d= 1$ and $|\tau|\le C$, where $C>0$ is a universal constant.		
		\end{proposition}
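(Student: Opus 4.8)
The goal is to show that the limit $\widetilde u$ of the linearized sequence $\widetilde u_n$ satisfies the two-part problem \eqref{lineariz}. The strategy is the classical one of passing to the limit in the viscosity condition, separately for the interior equation and the oblique boundary condition, using that a smooth test function touching $\widetilde u$ can be ``lifted'' to a test function touching $u_n$. First I would identify the operator $\widetilde\F$ and the vector $\tau$. The natural guess is $\widetilde\F(M):=\F_\infty(D^2p_\infty,g_\infty(0,e_d)e_d,0)$-linearized, i.e. the limit (along a subsequence) of the incremental operators $M\mapsto \tfrac1{r_n^{1+\alpha}}\big(\F_n(D^2p_n+r_n^{1+\alpha}M,\,g_n(0,e_d)e_d+\nabla p_n,\,x)-\F_n(D^2p_n,\,g_n(0,e_d)e_d+\nabla p_n,\,x)\big)$; by \eqref{eq:pucci} these are uniformly elliptic with the same $\lambda,\Lambda$ and, being limits of a fixed convex (resp. concave) $\F_n$ rescaled affinely in $M$, the limit still satisfies \ref{hyp:h2}. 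For $\tau$, \cref{lemma:definitive} already pins down the linearized free boundary condition: it should be $\tau=\tau(e_d,g_\infty)$ from \eqref{def:tau}, which satisfies $\tau\cdot e_d=1$ and $|\tau|\le C$ universally by \ref{hyp:h1} and the bounds on $\nabla_\theta g$.

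\emph{Interior equation.} Let $\phi$ be a polynomial (it suffices to test with polynomials) touching $\widetilde u$ strictly from below at an interior point $x_0\in B_{1/2}^+$. By uniform convergence on $\{x_d\ge\delta\}$ (\cref{corollary:convergence-uniform-hausdorff}), for large $n$ there are points $x_n\to x_0$ and constants $c_n\to 0$ such that $\phi+c_n$ touches $\widetilde u_n$ from below at $x_n$, and $x_n\in\Omega_{u_n}$ once $n$ is large since $x_0$ is interior and $\widetilde u_n\to\widetilde u$ with $u_n>0$ near $x_0$. Unwinding the definition \eqref{e:def-of-u-tilde_k}, the function $v_n(x):=g_n(0,e_d)x_d+p_n(x)+r_n^{1+\alpha}(\phi(x)+c_n)$ touches $u_n$ from below at $x_n$, so $\F_n(D^2v_n(x_n),\nabla v_n(x_n),x_n)\le f_n(x_n)$. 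Now I would subtract the identity $\F_n(D^2p_n,g_n(0,e_d)e_d+\nabla p_n,x_n)=f_n(x_n)+O(r_n^{1+\beta})$ from \cref{lemma:another-estimate}, use \ref{eq:vicinanza-F1} to replace the gradient slot $g_n(0,e_d)e_d+\nabla p_n+r_n^{1+\alpha}\nabla\phi$ by $g_n(0,e_d)e_d+\nabla p_n$ at the cost of $O(r_n^{2+\alpha})$, divide by $r_n^{1+\alpha}$, and pass to the limit to get $\widetilde\F(D^2\phi(x_0))\le 0$. (Since $\alpha<\beta$, the error $O(r_n^{\beta-\alpha})$ and $O(r_n)$ terms vanish.) The reverse inequality for test functions touching from above is identical, so $\widetilde\F(D^2\widetilde u)=0$ in $B_{1/2}^+$ in the viscosity sense.

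\emph{Oblique boundary condition.} Let $\phi\in C^1$ (again a polynomial suffices) touch $\widetilde u$ from below at $x_0\in B_{1/2}'$; I must show $\nabla\phi(x_0)\cdot\tau\ge 0$. By the Hausdorff convergence of the graphs $\Gamma_n\to\Gamma$, there are points $y_n\in\overline\Omega_{u_n}$ with $y_n\to x_0$ at which $\phi+c_n$ touches $\widetilde u_n$ from below, $c_n\to 0$. Here comes the dichotomy: if infinitely many $y_n$ lie in $\Omega_{u_n}$, then as in the interior case $v_n:=g_n(0,e_d)x_d+p_n+r_n^{1+\alpha}(\phi+c_n)$ is a lower test function at $y_n$ in the open set, $\F_n(D^2v_n,\nabla v_n,y_n)\le f_n(y_n)$; but the contradiction argument built into the proof of \cref{prop:partial-harnack} (comparison with the barrier $\psi$) rules this out for $r_n$ small unless the touching actually occurs on the free boundary, so after relabeling I may assume $y_n\in\partial\Omega_{u_n}$. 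Then $u_n$ is a viscosity supersolution on the free boundary, and the natural test function there is $v_n$ extended; since $v_n\ge 0$ near $y_n$ and $(v_n)_+$ touches $u_n$ from above along the positivity set while $v_n$ touches from below, one gets $|\nabla v_n(y_n)|\le g_n\big(y_n,\nabla v_n(y_n)/|\nabla v_n(y_n)|\big)$. Now apply \cref{lemma:definitive} with $q=\phi+c_n$: the right-hand side equals $g_n(0,e_d)+\partial_{x_d}p_n(y_n)+r_n^{1+\alpha}\partial_{x_d}\phi(y_n)-r_n^{1+\alpha}\nabla\phi(y_n)\cdot\tau(e_d,g_n)+O(r_n^{1+\beta})$, while $|\nabla v_n(y_n)|=g_n(0,e_d)+\partial_{x_d}p_n(y_n)+r_n^{1+\alpha}\partial_{x_d}\phi(y_n)+O(r_n^2)$ by the gradient expansion in the proof of \cref{lemma:definitive}. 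Subtracting, dividing by $r_n^{1+\alpha}$ and letting $n\to\infty$ (again $\alpha<\beta$ kills the errors, and $\tau(e_d,g_n)\to\tau$, $y_n\to x_0$, $\nabla\phi$ continuous) yields $-\nabla\phi(x_0)\cdot\tau\le 0$, i.e. $\nabla\phi(x_0)\cdot\tau\ge 0$, as required; the supersolution case (test from above, inequality reversed) is symmetric.

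\emph{Main obstacle.} The delicate point is the boundary case: justifying rigorously that a lower test function touching $\widetilde u_n$ at a point $y_n$ converging to the flat boundary must, for $n$ large, either sit on $\partial\Omega_{u_n}$ or be excluded by the barrier comparison, and then correctly converting ``$\phi$ touches $\widetilde u_n$ from below/above'' into the right one-sided touching of $u_n$ by $(v_n)_+$ so that \cref{def:def-sol} applies with the correct sign. This requires care with the $o(|x-y_n|)$ remainders and with the fact that $\nabla v_n(y_n)\to g_\infty(0,e_d)e_d\ne 0$, so the normalization $\nabla v_n/|\nabla v_n|$ is well-defined and converges; the expansions in \cref{lemma:definitive} are exactly what make this bookkeeping work, so the proof is mostly a matter of assembling those estimates in the right order.
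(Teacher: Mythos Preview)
Your approach is essentially the same as the paper's: define $\widetilde\F$ as the limit of the incremental operators, define $\tau$ as the limit of $\tau(e_d,g_n)$, then test with a polynomial $\widetilde P$ touching $\widetilde u$, lift to $P(x)=g_n(0,e_d)x_d+p_n(x)+r_n^{1+\alpha}\widetilde P(x)$ touching $u_n$, and pass to the limit using \cref{lemma:another-estimate} for the interior and \cref{lemma:definitive} for the boundary. Two points to clean up:

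\textbf{Sign in the boundary condition.} With $\phi$ touching $\widetilde u$ from below you should obtain $\nabla\phi(x_0)\cdot\tau\le 0$, not $\ge 0$. Your own expansions give $|\nabla v_n|-g_n\big(y_n,\tfrac{\nabla v_n}{|\nabla v_n|}\big)=r_n^{1+\alpha}\nabla\phi(y_n)\cdot\tau(e_d,g_n)+O(r_n^{1+\beta})\le 0$, hence $\nabla\phi(x_0)\cdot\tau\le 0$ in the limit; the step where you write ``$-\nabla\phi(x_0)\cdot\tau\le 0$'' drops a sign. This matches the paper, which derives $\nabla\widetilde P(x_0)\cdot\tau(e_d)\le 0$ for $\widetilde P$ touching from below.

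\textbf{Reducing to $y_n\in\partial\Omega_{u_n}$.} The paper simply writes ``without loss of generality, we can suppose that $x_n\in\partial\Omega_{u_n}$'' and does not invoke the barrier $\psi$ from \cref{prop:partial-harnack}; that barrier is a particular strict subsolution and does not say anything about an arbitrary test polynomial, so your appeal to it is not justified. The standard reduction is to perturb the test polynomial (e.g.\ subtract $\epsilon|x-x_0|^2$ to make the contact strict and add a term that makes $\widetilde P$ a strict supersolution of $\widetilde\F$), so that if $x_n$ were interior the interior viscosity inequality would be violated. You correctly identify this as the delicate point; just replace the partial-Harnack reasoning by this perturbation argument.
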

		\begin{proof}
		We define \begin{equation}\label{ftilde}\begin{aligned}\widetilde \F_n(M,\xi,x)&:=\frac1{r_n^{1+\alpha}}\Big(\F_n(D^2p_n+r_n^{1+\alpha}M,g_n(0,e_d)e_d+\nabla p_n+r_n^{1+\alpha}\xi,x)\\&\qquad-\F_n(D^2p_n,g_n(0,e_d)e_d+\nabla p_n,x)\Big),
				\end{aligned}
			\end{equation}
			and, up to a subsequence, we set
			\be\label{Finfty}\widetilde \F_\infty(M):=\lim_{n\to+\infty}\widetilde \F_n(M,0,0)\ee 
			where the limit is uniform on all compact subsets of $\mathcal{S}^{d\times d}$. Then, $\widetilde \F_\infty\in\EE$ satisfies \ref{hyp:h2}.
Let $\tau(\cdot,\cdot)$ as in \eqref{def:tau}, we define, up to a subsequence, \be\label{tau(e_d)}\tau(e_d):=\lim_{n\to+\infty}\tau(e_d,g_n),\ee which satisfies $\tau(e_d)\cdot e_d=1$ and $ |\tau(e_d)|\le C$ by \ref{hyp:h1}. We prove that $\widetilde u$ is a viscosity solution to \eqref{lineariz} with \be\label{def2}\widetilde \F:=\widetilde \F_\infty\quad\text{and}\quad\tau:=\tau(e_d).\ee 
            
            Let $\widetilde P$ be a polynomial touching $\widetilde u$ from below
			in a point $x_0\in B_{1/2}^+\cup B'_{1/2}$ (the case when $\widetilde u$ is touched by above is similar). 
            By \cref{corollary:convergence-uniform-hausdorff}, there exists a sequence of points $x_n\in\overline\Omega_{u_n}$ such that $\widetilde P$ touches $\widetilde u_n$ from below in $x_n$ and $x_n\to x_0$ as $n\to+\infty$.
			We define the polynomial \bea\label{pol:P}P(x):=g_n(0,e_d)x_d+p_n(x)+r_n^{1+\alpha}\widetilde P(x)\eea which touches from below $ u_n$ in $x_n$.

            \smallskip
			\textit{Step 1: interior condition.} We first prove that $\widetilde \F(D^2\widetilde u)\le0$ in $B_{1/2}^+$ in viscosity sense. Suppose that $x_0\in B_{1/2}^+$, then, for $n$ large enough, $x_n\in\Omega_{u_n}$ and thus \begin{align*}f(x_n)&\ge 
				\F_n(D^2 P(x_n),\nabla P(x_n),x_n)\\&=
			\F_n(D^2p_n+r_{n}^{1+\alpha}D^2 \widetilde P(x_n),g_n(0,e_d)e_d+\nabla p_n(x_n)+r_n^{1+\alpha}\nabla \widetilde P(x_n),x_n)\\&=r_n^{1+\alpha}\widetilde \F_n(D^2\widetilde P(x_n),\nabla \widetilde P(x_n),x_n)+\F_n(D^2p_n,g_n(0,e_d)e_d+\nabla p_n(x_n),x_n).\end{align*}
			It follows that $$r_{n}^{1+\alpha} \widetilde \F_n(D^2 \widetilde P(x_n),\nabla \widetilde P(x_n),x_n)\le f_n(x_n)-\F_n(D^2p_n,g_n(0,e_d)e_d+\nabla p_n(x_n),x_n)\le Cr_n^{1+\beta},$$
			by \cref{lemma:another-estimate}. This implies that $$\widetilde \F_n(D^2 \widetilde P(x_n),\nabla \widetilde P(x_n),x_n)\le Cr_n^{\beta-\alpha}$$
			and thus \begin{align*}\F_n(D^2\widetilde P(x_n),0, 0)&=\F_n(D^2\widetilde P(x_n),0, 0)-\F_n(D^2\widetilde P(x_n),\nabla \widetilde P(x_n), 0)\\&\qquad+\F_n(D^2\widetilde P(x_n),\nabla \widetilde P(x_n), 0)-\F_n(D^2\widetilde P(x_n),\nabla \widetilde P(x_n), x_n)\\&\qquad\qquad+\F_n(D^2\widetilde P(x_n),\nabla \widetilde P(x_n), x_n)
				\\&\le Cr_n+Cr_n^{\beta}+Cr_n^{\beta-\alpha}
			\end{align*} by \ref{eq:vicinanza-F1}.
			Therefore $$\widetilde \F_\infty(D^2 \widetilde P(x_0),0,0)\le0,$$ passing to the limit as $n\to+\infty$. This means that $\widetilde \F(D^2\widetilde u)\le0$ in $B_{1/2}^+$.
            \smallskip
            
			\textit{Step 2: boundary condition.} Now we prove that $\nabla \widetilde u\cdot \tau\le0$ on $B_{1/2}'$ in viscosity sense. Suppose that $x_0\in B_{1/2}'$. Without loss of generality, we can suppose that $x_n\in\partial\Omega_{u_n}\cap B_{1/2}$ for $n$ large enough.
			Then \bea\begin{aligned}|\nabla P(x_n)|^2&=(g_n(0,e_d)+\partial_{x_d}p_n(x_n)+r_n^{1+\alpha}\partial_{x_d} \widetilde P(x_n))^2+\sum_{j=1}^{d-1}(\partial_{x_j} p_n(x_n)+r_n^{1+\alpha}\partial_{x_j} \widetilde P(x_n))^2,\end{aligned}\eea and we deduce that \begin{align*}|\nabla P(x_n)|^2&=g_n(0,e_d)^2+2g_n(0,e_d)\partial_{x_d} p_n(x_n)+2r_n^{1+\alpha}g_n(0,e_d)\partial_{x_d} \widetilde P(x_n)+O(r_n^2),\end{align*}
			where we used \ref{eq:vicinanza-p}. Then $$|\nabla P(x_n)|=g_n(0,e_d)+\partial_{x_d} p_n(x_n)+r_n^{1+\alpha}\partial_{x_d} \widetilde P(x_n)+O(r_n^2).$$
			Since $P$ touches $u_n$ from below in $x_n$, then $$|\nabla P(x_n)|\le g_n\left(x_n,\frac{\nabla P(x_n)}{|\nabla P(x_n)|}\right)$$ and so $$g_n\left(x_n,\frac{\nabla P(x_n)}{|\nabla P(x_n)|}\right)\ge g_n(0,e_d)+\partial_{x_d}p_n(x_n)+r_n^{1+\alpha}\partial_{x_d} \widetilde P(x_n)+O(r_n^2).$$
            Then, by \cref{lemma:definitive}, we get
            $$r_n^{1+\alpha}\nabla \widetilde P(x_n)\cdot \tau(e_d,g_n)+O(r_n^{1+\beta})\le 0,$$
			Therefore, dividing by $r_n^{1+\alpha}$ and sending $n\to+\infty$, we obtain			$$\nabla\widetilde P(x_0)\cdot \tau(e_d)\le 0.$$ This means that $\nabla\widetilde u\cdot \tau\le 0$, as desired.
		\end{proof}
		\subsection{Step 3. Contradiction.}\label{section-contradiction} In this section we use the result in \cref{lemma:linearized-problem} to obtain a contradiction and thus to conclude the improvement of flatness in \cref{t:improvement of flatness}. We first prove the following technical lemma.
		\begin{lemma}
			\label{lemma:stima_vicino_p_p'}
			Let $\nu'\in \mathbb{S}^{d-1}$ and $\tau,\tau',\xi,\xi'\in\R^d$. 
			Let $p$ be a quadratic polynomial such that
			\begin{equation}
				\label{e:boundary_cond_p_W}
				\nabla p \cdot \tau = \xi\cdot x \quad \text{on } \{x_d = 0\}
			\end{equation} and suppose that, for some $r>0$, we have $$\begin{cases}
			\|p\|_{L^\infty(B_1)}\le Cr,\\   
               |\nu'-e_d|\le C r^{1+\alpha},\\
                  |\tau'-\tau|\le Cr^{1+\alpha},\\
                  c\le|\tau'|\le C,\\
                  |\xi'-\xi|\le Cr^{1+\beta},\\ 
                  |\xi'|\le r, 
			\end{cases}
			$$ for some universal constants $C>0$ and $c>0$. Then there exists a quadratic polynomial $p'$ such that \begin{equation*}
				\nabla p' \cdot \tau' = \xi'\cdot x \quad \text{on } \{x\cdot \nu' = 0\}.
			\end{equation*} and
			$$\|p-p'\|_{L^\infty(B_1)} \le Cr^{1+\beta}$$
			for some universal constant $C>0$.
		\end{lemma}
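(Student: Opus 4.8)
The plan is to construct $p'$ as a perturbation $p' = p + q$ of $p$ by a homogeneous quadratic polynomial $q$ with Hessian of size $O(r^{1+\beta})$, chosen so that the oblique condition on $\{x\cdot\nu'=0\}$ holds; the estimate $\|p'-p\|_{L^\infty(B_1)}\le Cr^{1+\beta}$ will then be automatic. First I would write $p(x) = Mx\cdot x$ with $M$ symmetric (which is no loss of generality), so that $\nabla p(x) = 2Mx$; since $p$ is $2$-homogeneous and $\|p\|_{L^\infty(B_1)}\le Cr$, this gives the matrix bound $|M|\le Cr$. Rewriting \eqref{e:boundary_cond_p_W} as $2M\tau\cdot x = \xi\cdot x$ for every $x$ with $x_d=0$, it is equivalent to $2M\tau - \xi = s\,e_d$ for some scalar $s$, and the size hypotheses yield $|s| = |2M\tau-\xi|\le 2|M|\,|\tau| + |\xi|\le Cr$ (using $|\tau|\le|\tau'|+|\tau'-\tau|\le C$ and $|\xi|\le|\xi'|+|\xi'-\xi|\le Cr$).

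Next, looking for $q(x) = Nx\cdot x$ with $N$ symmetric, the polynomial $p' = (M+N)x\cdot x$ satisfies the desired condition $\nabla p'\cdot\tau' = \xi'\cdot x$ on $\{x\cdot\nu'=0\}$ if and only if $2(M+N)\tau' - \xi'$ is parallel to $\nu'$, i.e.
\[
2N\tau' = \xi' - 2M\tau' + t\,\nu' \qquad\text{for some scalar } t .
\]
Here $t$ is a free parameter, and choosing it well is the crux: the naive choice only gives $|2N\tau'|\le Cr$, hence $|N|\le Cr$, which is not good enough. Instead I would take $t := s$ and substitute $2M\tau = \xi + s\,e_d$ and $\tau' = \tau + (\tau'-\tau)$, turning the right-hand side into
\[
v := \xi' - 2M\tau' + s\,\nu' = (\xi'-\xi) - 2M(\tau'-\tau) + s\,(\nu'-e_d),
\]
so that $|v|\le |\xi'-\xi| + 2|M|\,|\tau'-\tau| + |s|\,|\nu'-e_d|\le Cr^{1+\beta} + Cr^{2+\alpha}\le Cr^{1+\beta}$, where the last inequality uses $2+\alpha\ge 1+\beta$ (valid since $\alpha>0$ and $\beta<1$) and $r\le 1$.

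It then remains to realize $v$ as $2N\tau'$ with a symmetric matrix of norm $\le Cr^{1+\beta}$. Using $|\tau'|\ge c>0$, I would simply exhibit the explicit symmetric matrix
\[
N := \frac{1}{2|\tau'|^2}\bigl(v\otimes\tau' + \tau'\otimes v\bigr) - \frac{v\cdot\tau'}{2|\tau'|^4}\,\tau'\otimes\tau',
\]
which satisfies $2N\tau' = v$ by a direct computation and $|N|\le \frac{3}{2}\,|v|/|\tau'|\le C|v|\le Cr^{1+\beta}$. Then $p' := p + q$ with $q(x) = Nx\cdot x$ is a quadratic polynomial with the prescribed boundary condition, and $\|p'-p\|_{L^\infty(B_1)} = \|q\|_{L^\infty(B_1)} = \sup_{B_1}|Nx\cdot x|\le |N|\le Cr^{1+\beta}$.

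I expect the only genuinely delicate point to be the choice $t=s$ in the second step: it is precisely this matching of the free constant with the original relation $2M\tau-\xi = s\,e_d$, combined with the near-alignments $\nu'\approx e_d$ and $\tau'\approx\tau$, that upgrades the bound on $v$ from the trivial $O(r)$ to the required $O(r^{1+\beta})$. Everything else — the reduction to solving $2N\tau'=v$ and the explicit symmetric solution — is routine linear algebra.
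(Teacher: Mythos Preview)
Your argument is correct and is in fact cleaner than the paper's. The paper builds $p'$ in three stages: first it rescales the entries of $M$ columnwise (roughly $Q_{ij}=(\tau_i/\tau_i')M_{ij}+\sigma_{ij}$) to trade $\tau,\xi$ for $\tau',\xi'$ while keeping the hyperplane $\{x_d=0\}$ fixed; then it repeats the same trick with $R\tau',R\xi'$ for a rotation $R$ sending $\nu'$ to $e_d$; finally it sets $p'(x)=\overline p(Rx)$. Each step produces an $O(r^{1+\beta})$ error by combining the size bounds on $p$, $\xi'$ with the smallness of $|\tau'-\tau|$, $|\xi'-\xi|$, $|R-I|$.

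Your route bypasses all of this by writing the two boundary conditions as the vector identities $2M\tau-\xi=se_d$ and $2(M+N)\tau'-\xi'=t\nu'$ and observing that the single choice $t=s$ makes the required right-hand side $v$ collapse to a sum of difference terms, each already of order $r^{1+\beta}$. The explicit symmetric solution of $2N\tau'=v$ then finishes the job in one stroke. This has two advantages: it never divides by an individual component $\tau_i'$ (only by $|\tau'|\ge c$), so no care is needed about small coordinates of $\tau'$; and it keeps the correction manifestly symmetric. The paper's approach, by contrast, is more hands-on and keeps the three perturbations $(\tau\to\tau',\ \xi\to\xi',\ e_d\to\nu')$ conceptually separate, which mirrors how the lemma is later applied. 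One tiny remark: your justification of $r^{2+\alpha}\le r^{1+\beta}$ only needs $1+\alpha\ge\beta$, which holds for all $\beta\in(0,1]$, not just $\beta<1$.
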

		\begin{proof}
			Let $M\in\R^{d\times d}$ such that $p=\frac12Mx\cdot x$. We notice that \eqref{e:boundary_cond_p_W} is equivalent to require that the scalar product between the $j$-th column of $M$ and the vector $\tau$ is equal $\xi_j$, for every $j=1,\ldots,d-1$. We choose $Q\in\R^{d\times d}$ such that
			$$
			Q_{ij} = \omega_{ij} M_{ij}+\sigma_{ij},
			$$
			where the coefficients $\omega_{ij},\sigma_{ij}\in\R$ must be chosen in order to satisfy 
			\begin{equation}
				\label{e:cond_intermedia}
				\nabla \widetilde p \cdot \tau' = \xi'\cdot x \quad \text{on } \{x_d= 0\},
			\end{equation}
			where $\widetilde p := \frac{1}{2} Q x \cdot x$. Precisely, \eqref{e:cond_intermedia} holds if we choose $\omega_{ij}$,
			with $$\omega_{ij}:=\tau_i/\tau'_i\quad\text{for every}\quad i,j=1\ldots d$$ 
			and $$\sigma_{ij}:=\begin{cases}
				(\xi_j'-\xi_j)/{\tau_i'}\quad&\text{if }i=\ell,\\
				0\quad&\text{if }i\not=\ell,
			\end{cases}$$ where $\ell$ is such that $\tau'_\ell\not=0.$
			In particular it holds the estimate
			$$
			\|p-\widetilde{p}\|_{L^\infty(B_1)} \le C (\|p\|_{L^\infty(B_1)}|\tau'-\tau|+|\xi'-\xi|)\le Cr^{1+\beta}.
			$$
			Now let $R\in\R^{d\times d}$ be a rotation such that $R\nu'=e_d$. Then $|R-I|\le C|\nu'-e_d|$.
			Similarly as above, we can construct $\overline p$ such that
			$$
			\nabla\overline p\cdot \left(R\tau'\right)=R\xi'\cdot x \quad\text{on } \{x_d= 0\},
			$$
			with the estimate
			\bea
			\|\overline p-\widetilde p\|_{L^\infty(B_1)}&\le C(\|\widetilde p\|_{L^\infty(B_1)}|\tau'||R-I|+|\xi'||R-I|)\le C(\|\widetilde p\|_{L^\infty(B_1)}|\nu'-e_d|+|\xi'||\nu'-e_d|)\\&\le Cr^{1+\beta}.\eea
			Finally, by defining $p'(x):=\overline p(Rx)$, we have 
			$$\nabla p'(x)\cdot \tau'=\nabla \overline p(Rx)\cdot R\tau'=R\xi'\cdot Rx=\xi'\cdot x\quad\text{for every}\quad x\in\{x\cdot \nu'= 0\}$$
			and
			$$
			\|p'-\overline p\|_{L^\infty(B_1)}\le C\|\overline p\|_{L^\infty(B_1)}|R-I|\le C\|\overline p\|_{L^\infty(B_1)}|\nu'-e_d|\le Cr^{1+\beta},
			$$ 
			which concludes the proof
		\end{proof}
		\begin{proof}[Proof of \cref{t:improvement of flatness}] We can apply the $C^{2,\alpha_0}$ estimates from \cref{prop:c2alphaestimate} to the function $\widetilde u$ defined in \cref{lemma:linearized-problem}, then there is a universal constant $ C_0>0$ such that $$|\widetilde u(x)-\nabla\widetilde u(0)\cdot x-\frac12D^2\widetilde u(0)x\cdot x |\le  C_0|x|^{2+\alpha_0}\quad\text{for every} \quad x\in B_{1/4}^+\cup B_{1/4}'.$$
			We define 
			$$\nu:=\nabla\widetilde u(0)\in\R^d\quad\text{and}\quad W:=D^2\widetilde u(0)\in\mathcal{S}^{d\times d}.$$
			Then, by \cref{corollary:convergence-uniform-hausdorff}, up to multiplying $C_0$ by a universal constant and taking $n$ large enough, we get
			$$x\cdot \nu+\frac12Wx\cdot x- C_0\rho^{2+\alpha_0}\le\widetilde u_n(x)\le x\cdot \nu+\frac12Wx\cdot x+ C_0\rho^{2+\alpha_0}\quad\text{for every} \quad x\in\overline\Omega_{u_n}\cap B_{\rho},$$
			for some $\rho\in(0,1/4)$ to be chosen. We take 
			$$\overline \nu_n:=g_n(0,e_d)e_d+r_n^{1+\alpha}\nu\quad\text{and}\quad \overline p_n(x):=p_n(x)+r_n^{1+\alpha}\frac12Wx\cdot x,$$
			then the flatness can be rewritten as
			$$x\cdot \overline\nu_n+\overline p_n(x)-C_0r_n^{1+\alpha}\rho^{2+\alpha_0}\le u_n(x)\le x\cdot \overline\nu_n+\overline p_n(x)+C_0r_n^{1+\alpha}\rho^{2+\alpha_0}\quad\text{for every } \quad x\in\overline\Omega_{u_n}\cap B_{\rho}.$$ 
			
             \smallskip
			\noindent\textit{Step 1: changing the vector.}
			We need to modify the vector $\overline \nu_n$ since does not belong to $\mathbb{S}^{d-1}$ and so we define $\nu_n\in\mathbb{S}^{d-1}$ as the unit vector in direction $\overline \nu_n$. Then 
			\be\label{1est}\begin{aligned}\nu_n-e_d&=\frac{g_n(0,e_d)e_d+r_n^{1+\alpha}\nu-e_d\sqrt{(g_n(0,e_d)+r_n^{1+\alpha}\nu_d)^2+(r_n^{1+\alpha}\nu')^2}}{\sqrt{(g_n(0,e_d)+r_n^{1+\alpha}\nu_d)^2+(r_n^{1+\alpha}\nu')^2}}\\&=r_n^{1+\alpha}\frac{\nu-\nu_de_d}{g_n(0,e_d)}+O(r_n^2)
			\end{aligned}\ee then, by \ref{hyp:h1}, we have \be\label{eq:eq10}
			|\nu_n-e_d|\le Cr_n^{1+\alpha}\ee
			for some universal constant $C>0$. 
			Moreover, by \ref{eq:vicinanza-g1} we have
			\bea g_n(0,\nu_n)-\sqrt{(g_n(0,e_d)+r_n^{1+\alpha}\nu_d)^2+(r_n^{1+\alpha}\nu')^2}&=g_n(0,\nu_n)-g_n(0,e_d)-r_n^{1+\alpha}\nu_d+O(r_n^2)\\&=\nabla_\theta g_n(0,e_d)\cdot (\nu_n-e_d)-r_n^{1+\alpha}\nu_d+O(r_n^{1+\beta})\\&=r_n^{1+\alpha}\frac{\nabla_\theta g_n(0,e_d)}{g_n(0,e_d)}\cdot (\nu-\nu_de_d)-r_n^{1+\alpha}\nu_d\\&\qquad+O(r_n^{1+\beta})
			\eea where in the last inequality we used \eqref{1est}.
			Let $\tau(\cdot,\cdot)$ as in \eqref{def:tau}, then, by \cref{lemma:linearized-problem}, we have $\nu\cdot \tau=0$ and so $\nu\cdot \tau(e_d,g_n)$ converges to $0$ as $n\to+\infty$, by definition (see \eqref{tau(e_d)} and \eqref{def2}).
            Then we obtain
			\bea g_n(0,\nu_n)-\sqrt{(g_n(0,e_d)+r_n^{1+\alpha}\nu_d)^2+(r_n^{1+\alpha}\nu')^2}&=r_n^{1+\alpha}\nu\cdot \tau(e_d,g_n)+O(r_n^2)=r_n^{1+\alpha}o(1),\eea as $n\to+\infty$,
			therefore
			$$|g_n(0,\nu_n)\nu_n-\overline \nu_n|=|\nu_n||g_n(0,\nu_n)-|\overline \nu_n||
			=r_n^{1+\alpha}o(1)\quad\text{as } n\to+\infty.$$
			It follows that
			$$g(0,\nu_n)(x\cdot \nu_n)+\overline p_n(x)-C_0r_n^{1+\alpha}\rho^{2+\alpha_0}\le u_n(x)\le g(0,\nu_n)(x\cdot \nu_n)+\overline p_n(x)+C_0r_n^{1+\alpha}\rho^{2+\alpha_0}$$ for every $x\in\overline\Omega_{u_n}\cap B_{\rho},$ up to multiplying $C_0$ by a universal constant and taking $n$ large enough.
			
			\smallskip
			\textit{Step 2: changing the polynomial.}
			Now we need to modify the polynomials $\overline p_n$, since do not belong to the class of polynomials $\mathcal{P}(\nu_n,\F_n, f_n, g_n)$.
			First of all, we claim that for every $n$, we can find a polynomial $\widetilde p_n$ such that 
			\be\label{eq:firstcondition}\nabla \widetilde p_n(x)\cdot \tau(\nu_n,g_n)=\nabla_xg_n(0,\nu_n)\cdot x\quad \text{for every}\quad x\in\{x\cdot\nu_n=0\}\ee and \be\label{eq:secondcondition}\|\widetilde p_n-\overline p_n\|_{L^\infty(B_1)}=r_n^{1+\alpha}o(1)\quad\text{as }n\to+\infty,\ee where $\tau(\cdot,\cdot)$ is as in \eqref{def:tau}.
			Indeed we observe that, taking $w(x):=\frac12Wx\cdot x$, by \cref{lemma:linearized-problem}, we have that $$\nabla w\cdot \tau=0\quad\text{on }\{x_d=0\},$$  
            then, we can find $W_n\in\mathcal{S}^{d\times d}$ which converges to $W$ as $n\to+\infty$ and such that $w_n(x):=\frac12W_nx\cdot x$ satisfies $$\nabla w_n\cdot \tau(e_d,g_n)=0\quad\text{on }\{x_d=0\},$$ by \eqref{tau(e_d)} and \eqref{def2}.
			Taking $p_n+r_n^{1+\alpha}w_n$ and applying \cref{lemma:stima_vicino_p_p'}, which can be applied by \ref{eq:vicinanza-g1}, \ref{eq:vicinanza-p} and \eqref{eq:eq10}, there exists a quadratic polynomial $\widetilde p_n$ satisfying \eqref{eq:firstcondition} such that
			\begin{equation*}
				\begin{aligned}\|\widetilde p_n-p_n-r_n^{1+\alpha}w_n\|_{L^\infty(B_1)}\le Cr_n^{1+\beta}.
				\end{aligned}
			\end{equation*} 
			This implies that	\be\label{eq:ultima30}\begin{aligned} \|\widetilde p_n-\overline p_n\|_{L^\infty(B_1)}&\le\|\widetilde p_n-p_n-r_n^{1+\alpha}w_n\|_{L^\infty(B_1)}+r_n^{1+\alpha}\|w_n-w\|_{L^\infty(B_1)}
				\le r_n^{1+\alpha}o(1),\end{aligned}\ee as $n\to+\infty$. Then \eqref{eq:secondcondition} holds and the claim is proved.
			
			We take $\tau_n^\perp\in\mathbb{S}^{d-1}$ as a unit vector orthogonal to $\tau(\nu_n,g_n)$ and we take $$p_n'(x):=\widetilde p_n(x)+\frac12(t_n\tau_n^\perp\otimes \tau_n^\perp)x\cdot x$$ for $t_n\in\R$ so that
			$$\F_n(D^2p_n',g_n(0,\nu_n)\nu_n,0)=f_n(0)$$ and \bea t_n&=O(f_n(0)-\F_n(D^2\widetilde p_n,g_n(0,\nu_n)\nu_n,0)
            \\&=O(f_n(0)-\F_n(D^2\widetilde p_n,g_n(0,e_d)e_d,0))+O(r|g(0,\nu_n)\nu_n-g(0,e_d)e_d|),\eea by \ref{eq:vicinanza-F1}. We can choose $t_n$ with this property, indeed we have that $$t_n\mathcal{M}^-(\tau_n^\perp\otimes \tau_n^\perp)-r|g_n(0,\nu_n)\nu_n-g_n(0,e_d)e_d|\le \F(D^2p_n',g_n(0,\nu_n)\nu_n,0)-\F(D^2\widetilde p_n,g_n(0,e_d)e_d,0)$$ and $$\F(D^2p_n',g_n(0,\nu_n)\nu_n,0)-\F(D^2\widetilde p_n,g_n(0,e_d)e_d,0)\le t_n\mathcal{M}^+(\tau_n^\perp\otimes \tau_n^\perp)+r|g_n(0,\nu_n)\nu_n-g_n(0,e_d)e_d|$$ by \eqref{eq:pucci} and \ref{eq:vicinanza-F1}, where $$\mathcal{M}^+(\tau_n^\perp\otimes \tau_n^\perp)=\Lambda\quad\text{and}\quad \mathcal{M}^-(\tau_n^\perp\otimes \tau_n^\perp)=\lambda. $$
			By definition of $p_n'$, we also have
			$$\nabla p_n'(x)\cdot\tau(\nu_n,g_n)=\nabla_x g_n(0,\nu_n)\cdot x\quad\text{for every}\quad x\in \{x\cdot\nu_n=0\},$$ then $p_n'\in\mathcal{P}(\nu_n,\F_n,f_n,g_n).$
			We claim that 
            \be\label{t_n}t_n=r_n^{1+\alpha}o(1)\quad\text{as } n\to+\infty.\ee
            Observe that \be\label{eq:thirdcondition}|g_n(0,\nu_n)\nu_n-g_n(0,e_d)e_d|\le C r_n^{1+\alpha},\ee by \ref{eq:vicinanza-g1} and \eqref{eq:eq10}. Let $\widetilde \F_n$ is as in \eqref{ftilde}, then
			\be\label{usefultilde} r_n^{1+\alpha}\widetilde\F_n(M,0,0)=\F_n(D^2p_n+r_n^{1+\alpha}M,g_n(0,e_d)e_d,0)-\F(D^2p_n,g_n(0,e_d)e_d,0).\ee
			Then, by \eqref{eq:pucci}, \ref{eq:vicinanza-F1}, \eqref{eq:ultima30}, \eqref{eq:thirdcondition} and \eqref{usefultilde}, we obtain \begin{align*}\F_n(D^2\widetilde p_n,g_n(0,e_d)e_d,0)&=\F_n(D^2 \overline p_n,g_n(0,e_d)e_d,0)+O(||D^2\widetilde p_n-D^2 \overline p_n||_{L^\infty(B_1)})
				\\&=\F_n(D^2 p_n+r_n^{1+\alpha}W,g_n(0,e_d)e_d,0)+r_n^{1+\alpha}o(1)
				\\&= \F_n(D^2p_n,g_n(0,e_d)e_d,0)+r_n^{1+\alpha}\widetilde\F_n(W,0,0)+r_n^{1+\alpha}o(1)
				\\&=\F_n(D^2p_n,g_n(0,\nu_n)\nu_n,0)+r_n^{1+\alpha}\widetilde\F_n(W,0,0)+r_n^{1+\alpha}o(1)
				\\&=
				f_n(0)+r_n^{1+\alpha}o(1),\end{align*} as $n\to+\infty$, where we used  and that $\widetilde \F_n(W,0,0)$ converges to $\widetilde \F_\infty(W)=0$ by \cref{lemma:linearized-problem}, \eqref{Finfty} and \eqref{def2}.
			The claim \eqref{t_n} is proved.
			Thus we get 
			\begin{align*}\|  p_n'-\overline p_n\|_{L^\infty(B_1)}&\le\|  p_n'-\widetilde p_n\|_{L^\infty(B_1)}+\|  \widetilde p_n-\overline p_n\|_{L^\infty(B_1)}=r_n^{1+\alpha}o(1)\quad\text{as } n\to+\infty
			\end{align*} by \eqref{eq:ultima30} and \eqref{t_n}.
			We also have
			\begin{align*}\|p_n'- p_n\|_{L^\infty(B_1)}&\le 
				\|p_n'- \overline p_n\|_{L^\infty(B_1)}+\|\overline p_n- p_n\|_{L^\infty(B_1)}\le Cr_n^{1+\alpha}
			\end{align*} for some universal constant $ C>0$.
			Therefore, up to multiplying $C_0$ by a universal constant and taking $n$ large enough, we have that $$g_n(0,\nu_n')(x\cdot \nu'_n)+ p_n'(x)-C_0r_n^{1+\alpha}\rho^{2+\alpha_0}\le u_n(x)\le g_n(0,\nu_n')(x\cdot \nu'_n)+ p_n'(x)+C_0r_n^{1+\alpha}\rho^{2+\alpha_0}$$ for every $x\in \overline\Omega_{u_n}\cap B_{\rho}$.
			Finally, if we choose $\rho>0$ such that $C_0\rho^{2+\alpha_0}\le \rho^{2+\alpha}$, we get a contradiction with the initial assumptions.
		\end{proof}
		
		\section{Flatness implies \texorpdfstring{$C^{2,\alpha}$}{C2a}}\label{section6}
		In this section we prove \cref{t:flatness-implies}, namely, the $C^{2,\alpha}$ regularity of flat free boundaries.
		We iterate the quadratic improvement of flatness in \cref{t:improvement of flatness} to prove the following proposition, which is the rate of convergence for the Taylor expansion for a solution $u$ up to order two.
		\begin{proposition}[Second order Taylor expansion with rate of convergence]\label{corollary:rate-of-convergence} There are universal constants $\eps_0>0$ and $C>0$ such that the following holds. Suppose that $\F\in\EE$, $f\in C^{0,\beta}(B_1)$, $g\in C^{1,\beta}(B_1,\mathbb{S}^{d-1})$ satisfy the hypotheses \ref{hyp:h1}, \ref{hyp:h2}, \ref{hyp:h3}. Let $u:B_1\to \R$ be a non-negative continuous viscosity solution to \eqref{def:def-viscosity-solution}, such that $$(g(0,\nu)(x\cdot \nu)-\eps_0)_+\le u(x)\le (g(0,\nu)(x\cdot \nu)+ \eps_0)_+\quad\text{for every}\quad x\in B_1,$$ for some unit vector $\nu\in\mathbb{S}^{d-1}$. 			
			Then, for every $x_0\in \partial\Omega_u\cap B_{1/2}$, there is a unit vector $\nu_{x_0}\in\mathbb{S}^{d-1}$ and a quadratic polynomial $p_{x_0}$ such that
			$$\|u(x_0+x)-g(0,\nu_{x_0})(x\cdot \nu_{x_0})-p_{x_0}(x)\|_{L^\infty(B_{r}\cap\Omega_u)}\le Cr^{2+\alpha}\quad\text{for every}\quad r\in(0,1/2).$$
		\end{proposition}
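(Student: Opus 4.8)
The plan is to deduce \cref{corollary:rate-of-convergence} by iterating the quadratic improvement of flatness \cref{t:improvement of flatness} at every point $x_0\in\partial\Omega_u\cap B_{1/2}$, along a dyadic family of rescalings $u_{x_0,s_k}$ of $u$ centred at $x_0$, and then letting $k\to\infty$. Two preliminary remarks. First, the two--sided flatness already forces $\{g(0,\nu)(x\cdot\nu)>\eps_0\}\cap B_1\subset\Omega_u$ and $\{g(0,\nu)(x\cdot\nu)\le-\eps_0\}\cap B_1\subset\{u=0\}$, so $\partial\Omega_u\cap B_1$ is trapped in the slab $\{|x\cdot\nu|\le\eps_0/\gamma_0\}$. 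Second, the linear counterpart of the present argument yields the $C^{1,\alpha}$ regularity of flat free boundaries (see the comment after \cref{t:flatness-implies}), so $\partial\Omega_u$ is a $C^{1,\alpha}$ graph in $B_{1/2}$, with a well--defined inner normal $\nu_{x_0}$ at each point and a uniform first order expansion $u(x_0+y)=g(x_0,\nu_{x_0})(y\cdot\nu_{x_0})_++O(|y|^{1+\alpha})$.

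\emph{Initialization.} Fix $x_0\in\partial\Omega_u\cap B_{1/2}$ and rotate the coordinates so that $\nu_{x_0}=e_d$; this rotates only $\F$ and $g$ and preserves \eqref{eq:pucci}, \ref{hyp:h1}, \ref{hyp:h2}, \ref{hyp:h3}. By \cref{lemma:rate}, for a small universal radius $\rho_0$ the rescaled data $\F_{x_0,\rho_0}$, $f_{x_0,\rho_0}$, $g_{x_0,\rho_0}$ from \eqref{def:rescaling-operator} satisfy the structural hypotheses \ref{eq:vicinanza-F1}, \ref{eq:vicinanza-f}, \ref{eq:vicinanza-g1} of \cref{t:improvement of flatness} with a universal parameter $r_1\le r_0$; this is precisely where $\F\in C^{0,\beta}$ (through \ref{hyp:h3}), $f\in C^{0,\beta}$ and $g\in C^{1,\beta}$ enter, the $\beta$-- and $(1+\beta)$--powers in those hypotheses being the ones produced by the scaling of the corresponding Hölder seminorms. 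An initial polynomial $p_1\in\mathcal{P}(e_d,\F_{x_0,\rho_0},f_{x_0,\rho_0},g_{x_0,\rho_0})$ with $\|p_1\|_{L^\infty(B_1)}\le r_1$ is built exactly as in \cref{lemma:stima_vicino_p_p'} and Step~2 of the proof of \cref{t:improvement of flatness}: one solves the linear oblique condition in \eqref{def:polynomial} and then adjusts a rank--one term orthogonal to $\tau(e_d,g_{x_0,\rho_0})$, using uniform ellipticity, to meet the interior equation at $0$; since $D^2p_1$ is $O(\rho_0)$ by the rescaling, $\|p_1\|_{L^\infty(B_1)}$ is $O(\rho_0)$ as well. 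Taking $\rho_0$ universal, and $\eps_0$ small enough that the $C^{1,\alpha}$ theory applies, the first order expansion at $x_0$ and the smallness of $p_1$ make $v_0:=u_{x_0,\rho_0}$ flat in $B_1$ with respect to $g_{x_0,\rho_0}(0,e_d)x_d+p_1(x)=g(x_0,e_d)x_d+p_1(x)$, at tolerance $r_1^{1+\alpha}$, with $0\in\partial\Omega_{v_0}$; hence all hypotheses of \cref{t:improvement of flatness} hold for $v_0$.

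\emph{Iteration and passage to the limit.} Applying \cref{t:improvement of flatness} to $v_0$, then to $(v_0)_\rho=u_{x_0,\rho_0\rho}$ --- whose data is again a rescaling, so satisfies \ref{eq:vicinanza-F1}--\ref{eq:vicinanza-g1} with the reduced parameter by \cref{lemma:rate}, and for which the rescaled polynomial is again admissible by the scale--invariance of the class \eqref{def:polynomial} --- and iterating, one produces for every $k$ a unit vector $\nu_k$ and a $2$--homogeneous polynomial $q_k$, with $\|q_k\|_{L^\infty(B_1)}\le C$, such that
$$\|u(x_0+y)-g(x_0,\nu_k)(y\cdot\nu_k)-q_k(y)\|_{L^\infty(B_{s_k}\cap\Omega_u)}\le C\,s_k^{2+\alpha},\qquad s_k:=\rho_0\rho^k,$$
together with $|\nu_{k+1}-\nu_k|+\|q_{k+1}-q_k\|_{L^\infty(B_1)}\le C\,s_k^{1+\alpha}$, which comes from the estimates \eqref{eq:stime}. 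These bounds are summable in $k$, hence $\nu_k\to\nu_\infty\in\mathbb{S}^{d-1}$ and $q_k\to q_{x_0}$ with $|\nu_k-\nu_\infty|+\|q_k-q_{x_0}\|_{L^\infty(B_1)}\le C\,s_k^{1+\alpha}$; replacing $(\nu_k,q_k)$ by $(\nu_\infty,q_{x_0})$ in the displayed inequality costs at most $C\,s_k^{1+\alpha}|y|\le C\,s_k^{2+\alpha}$ from the slope and $C\,s_k^{3+\alpha}$ from the ($2$--homogeneous) polynomial on $B_{s_k}$. Putting $\nu_{x_0}:=\nu_\infty$ and $p_{x_0}(y):=\bigl(g(x_0,\nu_{x_0})-g(0,\nu_{x_0})\bigr)(y\cdot\nu_{x_0})+q_{x_0}(y)$, a polynomial of degree at most two, we obtain $\|u(x_0+y)-g(0,\nu_{x_0})(y\cdot\nu_{x_0})-p_{x_0}(y)\|_{L^\infty(B_{s_k}\cap\Omega_u)}\le C\,s_k^{2+\alpha}$; interpolating over $s_{k+1}<r\le s_k$ gives the conclusion for every $r\in(0,1/2)$, with a constant $C$ that absorbs the universal factor $\rho_0^{-(2+\alpha)}$.

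\emph{Main difficulty.} The delicate point is the bookkeeping that keeps \emph{all} the hypotheses of \cref{t:improvement of flatness} valid along the whole iteration and uniformly in $x_0$: after each step one must check that the adjusted vector $\nu_k$ --- only an $O(s_k^{1+\alpha})$ perturbation of $e_d$ --- and the adjusted polynomial remain compatible with a further rescaling, i.e.\ that the rescaled operators $\F_{x_0,s_k}$, $f_{x_0,s_k}$, $g_{x_0,s_k}$ still satisfy \ref{eq:vicinanza-F1}--\ref{eq:vicinanza-g1} with the new parameter and that the rescaled polynomial still lies in the class \eqref{def:polynomial} for the rescaled data. This is exactly \cref{lemma:rate} combined with the scaling of \eqref{def:polynomial}, and it is here that the precise exponents in \ref{eq:vicinanza-F1}--\ref{eq:vicinanza-g1}, tuned to the scaling of the $C^{0,\beta}$ and $C^{1,\beta}$ seminorms, are indispensable; one must also ensure that the starting constants $\rho_0$, $r_1$ and $\eps_0$ can be chosen once and for all, independently of $x_0\in\partial\Omega_u\cap B_{1/2}$.
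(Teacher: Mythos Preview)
Your iteration scheme is essentially the paper's: apply \cref{t:improvement of flatness} at the dyadic scales $s_k=\rho_0\rho^k$, show that the resulting $(\nu_k,q_k)$ are Cauchy, and pass to the limit. Your initialization through the preliminary $C^{1,\alpha}$ theory (rotating so that $\nu_{x_0}=e_d$ at each $x_0$) is a legitimate variant of the paper's, which instead works with the single global flatness direction $e_d$ for every $x_0$.

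Two points, however, need correction. First, the rate $\|q_{k+1}-q_k\|_{L^\infty(B_1)}\le C\,s_k^{1+\alpha}$ is too strong: \eqref{eq:stime} gives $\|p_k'-p_k\|_{L^\infty(B_1)}\le\overline C\,r_k^{1+\alpha}$ at unit scale, and since your un-rescaled $q_k$ has Hessian $D^2p_k/\rho_k$, one only gets $\|q_{k+1}-q_k\|_{L^\infty(B_1)}\le C\,s_k^{\alpha}$. This is still summable and, by $2$-homogeneity, still yields $\|q_k-q_{x_0}\|_{L^\infty(B_{s_k})}\le C\,s_k^{2+\alpha}$, so the conclusion survives (the paper tracks $p_k/r_k$ instead, with the same outcome). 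Second, and more substantively, you never verify that the norm bound in \ref{eq:vicinanza-p}, namely $\|p_k\|_{L^\infty(B_1)}\le r_k$, persists along the iteration; membership of $\rho p_k'$ in $\mathcal{P}(\nu_k',\F_{k+1},f_{k+1},g_{k+1})$ does not by itself imply this, and it is exactly what licenses the $(k{+}1)$-st application of \cref{t:improvement of flatness}. The paper carries this out explicitly via a telescoping sum, using $p_{k+1}=\rho\,p_k'$, \eqref{eq:stime}, and the initial smallness $\|p_0\|_{L^\infty(B_1)}\le Cr_0^{2}$, to obtain
\[
\|p_{k+1}\|_{L^\infty(B_1)}\le \rho^{k+1}\|p_0\|_{L^\infty(B_1)}+\overline C\,r_0^{1+\alpha}\rho^{k+1}\sum_{j=0}^{k}\rho^{j\alpha}\le r_{k+1}
\]
for $r_0$ small enough. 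Once this is inserted, your argument is complete.
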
	
		\noindent In order to prove the $C^{2,\alpha}$ estimate in \cref{corollary:rate-of-convergence}, we need a preliminary lemma.
		\begin{lemma}\label{lemma:rate}
	There is a universal constant $\delta>0$ such that the following holds.
        Suppose that $\F\in\EE$, $f\in C^{0,\beta}(B_1)$, $g\in C^{1,\beta}(B_1,\mathbb{S}^{d-1})$ satisfy the hypotheses \ref{hyp:h1}, \ref{hyp:h2}, \ref{hyp:h3}.
            Let $u:B_1\to\R$ be a non-negative continuous viscosity solution to \eqref{def:def-viscosity-solution}, then, for every $x_0\in \partial\Omega_u\cap B_{1/2}$, we have
			\be\label{eq:eq1}[\F_{x_0,\delta}]_{\text{Lip}(\R^d)}\le r_0^{2},\quad [\F_{x_0,\delta}]_{C^{0,\beta}(B_1)}\le r_0^{2}, \quad [f_{x_0,\delta}]_{C^{0,\beta}(B_1)}\le r_0^{2}, \quad [g_{x_0,\delta}]_{C^{1,\beta}(B_1)}\le  r_0^{2},\ee and
			\be\label{eq:eq2}
			\|f_{x_0,\delta}\|_{L^\infty(B_1)}\le r_0^{2}, \quad \|\nabla_x g_{x_0,\delta}\|_{L^\infty(B_1\times \mathbb{S}^{d-1})}\le r_0^{2},\quad 
			\ee where $\F_{x_0,\delta}$, $f_{x_0,\delta}$ and $g_{x_0,\delta}$ are the rescalings in \eqref{def:rescaling-operator} and $r_0>0$ is as in \cref{t:improvement of flatness}.
			Moreover, if $\rho_n:=\delta\rho^n$ and $r_n:=r_0\rho^n$, where $\rho>0$ is as in \cref{t:improvement of flatness}, then the sequence $\F_n:=\F_{x_0,\rho_n}$, $f_n:=f_{x_0,\rho_n}$, $g_n:=g_{x_0,\rho_n}$ satisfy the hypotheses \ref{eq:vicinanza-F1}, \ref{eq:vicinanza-f}, \ref{eq:vicinanza-g1} in \cref{t:improvement of flatness}.
		\end{lemma}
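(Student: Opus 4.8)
The proof is pure bookkeeping of how the structural quantities of $\F$, $f$, $g$ transform under the rescaling \eqref{def:rescaling-operator}, so the plan is to record the relevant scaling identities, fix $\delta$ accordingly, and then read everything off from the explicit powers of the scale. Fix $x_0\in\partial\Omega_u\cap B_{1/2}$ and $0<\rho\le 1/2$, so that $B_\rho(x_0)\subset B_1$. A direct change of variables in $\F_{x_0,\rho}(M,\xi,x)=\rho\,\F(\rho^{-1}M,\xi,x_0+\rho x)$, together with the $1$-homogeneity of $\mathcal{M}^{\pm}$ in \eqref{eq:pucci} and with \ref{hyp:h3}, gives: $\F_{x_0,\rho}\in\EE$; $\F_{x_0,\rho}$ inherits \ref{hyp:h2} (multiplying by $\rho>0$ and precomposing with $M\mapsto\rho^{-1}M$ preserve concavity/convexity); $[\F_{x_0,\rho}]_{\text{Lip}(\R^d)}\le\rho\,[\F]_{\text{Lip}(\R^d)}$; $[\F_{x_0,\rho}]_{C^{0,\beta}(B_1)}\le\rho^{\beta}[\F]_{C^{0,\beta}(B_1)}$; and, more finely, $\|\F_{x_0,\rho}(M,\xi,\cdot)-\F_{x_0,\rho}(M,\xi,0)\|_{L^\infty(B_1)}\le\rho^{1+\beta}[\F]_{C^{0,\beta}(B_1)}(1+|\xi|)+\rho^{\beta}[\F]_{C^{0,\beta}(B_1)}|M|$. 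Similarly $\|f_{x_0,\rho}\|_{L^\infty(B_1)}\le\rho\|f\|_{L^\infty(B_1)}$, $[f_{x_0,\rho}]_{C^{0,\beta}(B_1)}\le\rho^{1+\beta}[f]_{C^{0,\beta}(B_1)}$; and $g_{x_0,\rho}$ inherits \ref{hyp:h1} with the same $\gamma_0$, satisfies $\nabla_x g_{x_0,\rho}(x,\nu)=\rho\,\nabla_x g(x_0+\rho x,\nu)$ and $\nabla_\theta g_{x_0,\rho}(x,\nu)=\nabla_\theta g(x_0+\rho x,\nu)$, hence $\|\nabla_x g_{x_0,\rho}\|_{L^\infty(B_1\times\mathbb{S}^{d-1})}\le\rho\|\nabla_x g\|_{L^\infty}$ and $[g_{x_0,\rho}]_{C^{1,\beta}(B_1)}\le\rho^{1+\beta}[g]_{C^{1,\beta}(B_1)}$. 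Then \eqref{eq:eq1}--\eqref{eq:eq2} follow by fixing a universal $\delta\in(0,1/2]$ small enough that each of these right-hand sides at $\rho=\delta$ is $\le r_0^2$, which only constrains $\delta$ in terms of $r_0$ and of the universal seminorms of $\F$, $f$, $g$.

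For the second part I would work directly with $\F_n(M,\xi,x)=\rho_n\F(\rho_n^{-1}M,\xi,x_0+\rho_n x)$, where $\rho_n=\delta\rho^n\le\delta$, and similarly for $f_n$, $g_n$ (which still satisfy \ref{hyp:h1}, \ref{hyp:h2} by the first part). The same scaling estimates, now carrying the explicit factors $\rho_n$, $\rho_n^\beta$, $\rho_n^{1+\beta}$, give for every $n$: $\|\F_n(M,\xi,\cdot)-\F_n(M,\eta,\cdot)\|_{L^\infty(B_1)}\le\rho_n[\F]_{\text{Lip}(\R^d)}|\xi-\eta|\le r_n|\xi-\eta|$; $\|\F_n(M,\xi,\cdot)-\F_n(M,\xi,0)\|_{L^\infty(B_1)}\le\rho_n^{1+\beta}[\F]_{C^{0,\beta}(B_1)}(1+|\xi|)+\rho_n^\beta[\F]_{C^{0,\beta}(B_1)}|M|\le r_n^\beta(r_n+r_n|\xi|+|M|)$; $\|f_n-f_n(0)\|_{L^\infty(B_1)}\le\rho_n^{1+\beta}[f]_{C^{0,\beta}(B_1)}\le r_n^{1+\beta}$; $\|\nabla_x g_n(0,\cdot)\|_{L^\infty(\mathbb{S}^{d-1})}\le\rho_n\|\nabla_x g\|_{L^\infty}\le r_n$ — each final inequality being a comparison of powers of $\rho$ and $r_0$ once $\delta$ is small (and $r_0\le 1$), since $r_n=r_0\rho^n$ and $\rho_n=\delta\rho^n$. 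For the first estimate in \ref{eq:vicinanza-g1} I would write its left-hand side as the first-order Taylor remainder of $g$ at $(x_0,\nu_0)$, bounded by $C\,[g]_{C^{1,\beta}(B_1\times\mathbb{S}^{d-1})}\,(|\rho_n x|^{1+\beta}+|\nu_1-\nu_0|^{1+\beta})$, split into its $x$-part (which is $\le r_n^{1+\beta}$ after choosing $\delta$ small) and its $\nu$-part (which fixes the universal constant $C$ appearing there); the remaining two inequalities in \ref{eq:vicinanza-g1} follow from the $\beta$-Hölder continuity in $\nu$ of $\nabla_x g$ (with an extra factor $\rho_n$, giving the $r_n$-bound) and of $\nabla_\theta g$ (no such factor, giving only a universal constant).

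Everything above is a routine change of variables, so I do not expect a genuine obstacle; the two points that require care are keeping the directions straight — the rescaling flattens only the $x$-variable, so each $x$-seminorm gains a positive power of $\rho_n$ while the $\nu$-dependence of $g$ is untouched, which is exactly why the $\nabla_\theta g$ terms in \ref{eq:vicinanza-g1} stay of universal (not small) size, consistently with how \ref{eq:vicinanza-g1} is phrased — and not collapsing the refined bound $\rho_n^{1+\beta}(1+|\xi|)+\rho_n^\beta|M|$ into a crude $C^{0,\beta}$-seminorm bound, since the latter would be too lossy to reach the form $r_n^\beta(r_n+r_n|\xi|+|M|)$ for large $n$. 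One should also fix $\delta\le 1/2$ from the start so that $B_{\rho_n}(x_0)\subset B_1$ for every $n$ and every $x_0\in B_{1/2}$, and be precise that $[\,\cdot\,]_{C^{1,\beta}(B_1)}$ for $g$ denotes the Hölder seminorm in $x$ with $\nu$ frozen.
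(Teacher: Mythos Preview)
Your proposal is correct and follows essentially the same approach as the paper: both record how the structural seminorms of $\F$, $f$, $g$ transform under the rescaling \eqref{def:rescaling-operator}, fix $\delta$ small so that the quantities at scale $\delta$ are $\le r_0^2$, and then verify \ref{eq:vicinanza-F1}, \ref{eq:vicinanza-f}, \ref{eq:vicinanza-g1} at each scale $\rho_n$ by reading off the explicit powers of the scale. The only cosmetic difference is organizational---the paper writes $\F_n$ as a rescaling of $\F_{x_0,\delta}$ by the factor $\rho^n$ and then uses \eqref{eq:eq1}--\eqref{eq:eq2}, while you rescale $\F$ directly by $\rho_n=\delta\rho^n$; the resulting inequalities are identical. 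Your remark that the $\nu$-dependence of $g$ is untouched by the rescaling (so that the $\nabla_\theta g$ bounds in \ref{eq:vicinanza-g1} carry only a universal constant, not a factor of $r_n$) is exactly the point, and matches the paper's computation.
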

		\begin{proof} 
        The existence of a universal $\delta>0$ such that \eqref{eq:eq1} and \eqref{eq:eq2} hold follows by \eqref{eq:pucci}, \ref{hyp:h3} and by definitions of $\F_{x_0,\delta}$, $f_{x_0,\delta}$, $g_{x_0,\delta}$.
        Using \eqref{eq:eq1} and \eqref{eq:eq2}, we have the following estimates.
			\begin{itemize}
				\item The rescalings $\F_n$ satisfy \ref{hyp:h2} and \ref{eq:vicinanza-F1}, indeed
                \begin{align*}\|\F_n(M,\xi,\cdot)-\F_n(M,\eta,\cdot)\|_{L^\infty(B_1)}&= \rho^n\left\|\F_{x_0,\delta}\left(\frac{1}{\rho^n}M,\xi,\cdot\right)-\F_{x_0,\delta}\left(\frac{1}{\rho^n}M,\eta,\cdot\right)\right\|_{L^\infty(B_1)}\\&\le \rho^n[\F_{x_0,\delta}]_{\text{Lip}(\R^d)}|\xi-\eta|\le r_n|\xi-\eta|.
				\end{align*} and, by \ref{hyp:h3}, \begin{align*}\|\F_n(M,\xi,x)-\F_n(M,\xi,0)\|_{L^\infty(B_1)}&= \rho^n\biggl\|\F_{x_0,\delta}\left(\frac{1}{\rho^n}M,\xi,\rho^nx\right)-\F_{x_0,\delta}\left(\frac{1}{\rho^n}M,\xi,0\right)\biggl\|_{L^\infty(B_1)}\\&\le[\F_{x_0,\delta}]_{C^{0,\beta}(B_1)}\rho^n\left(1+|\xi|+\frac{1}{\rho^n}|M|\right)\rho^{n\beta}\\&\le r_n^{\beta}(r_n+r_n|\xi|+|M|).
				\end{align*}

                \smallskip
                \item The rescalings $f_n$ satisfy \ref{eq:vicinanza-f}, indeed
				$$\|f_n(x)-f_n(0)\|_{L^\infty(B_1)}= \rho^n\|f_{x_0,\delta}(\rho^nx)-f_{x_0,\delta}(0)\|_{L^\infty(B_1)}\le  [f_{x_0,\delta}]_{C^{0,\beta}(B_1)}\rho^{n(1+\beta)} \le r_n^{1+\beta}.$$
				
                \smallskip
                \item The rescalings $g_n$ satisfy \ref{hyp:h1} and \ref{eq:vicinanza-g1}, indeed \bea\|g_n(x&,\nu_1)-g_n(0,\nu_0)-\nabla_xg_n(0,\nu_0)\cdot x-\nabla_\theta g_n(0,\nu_0)\cdot (\nu_1-\nu_0)\|_{L^\infty(B_1)}\\&=\|g_{x_0,\delta}(\rho^nx,\nu_1)-g_{x_0,\delta}(0,\nu_0)-\nabla_xg_{x_0,\delta}(0,\nu_0)\cdot x-\nabla_\theta g_{x_0,\delta}(0,\nu_0)\cdot (\nu_1-\nu_0)\|_{L^\infty(B_1)}\\&\le [g_{x_0,\delta}]_{C^{1,\beta}(B_1)}(\rho^{n(1+\beta)}+|\nu_1-\nu_0|^{1+\beta})\\&\le r_n^{1+\beta}+C|\nu_1-\nu_0|^{1+\beta},
				\eea
	            \begin{align*}|\nabla_xg_n(0,\nu_1)-\nabla_xg_n(0,\nu_0)|&=\rho^n|\nabla_xg_{x_0,\delta}(0,\nu_1)-\nabla_xg_{x_0,\delta}(0,\nu_0)|\le \rho^n[g_{x_0,\delta}]_{C^{1,\beta}(B_1)}|\nu_1-\nu_0|^\beta\\&\le r_n|\nu_1-\nu_0|^\beta,\end{align*} 
				\begin{align*}|\nabla_\theta g_n(0,\nu_1)-\nabla_\theta g_n(0,\nu_0)|&=|\nabla_\theta g_{x_0,\delta}(0,\nu_1)-\nabla_\theta g_{x_0,\delta}(0,\nu_0)|\le [g_{x_0,\delta}]_{C^{1,\beta}(B_1)}|\nu_1-\nu_0|^\beta,\end{align*} 
			$$\|\nabla g_n(0,\cdot)\|_{L^\infty(\mathbb{S}^{d-1})}=\rho^n \|\nabla_x g_{x_0,\delta}(0,\cdot)\|_{L^\infty(\mathbb{S}^{d-1})}\le r_n.$$ \qedhere
			\end{itemize}
		\end{proof}
		Now we are ready to prove \cref{corollary:rate-of-convergence}.
		\begin{proof}[Proof of \cref{corollary:rate-of-convergence}] Without loss of generality, we can suppose that $\nu=e_d$. Let $x_0\in \partial\Omega_u\cap B_{1/2}$,
			we take $\eps_0:=\delta r_0^{2}$ where $r_0>0$ is as in \cref{t:improvement of flatness} and $\delta>0$ as in \cref{lemma:rate}.
			We consider $u_{x_0,\delta}(x):=\frac{u(x_0+\delta x)}{\delta}$, then, by \ref{hyp:h1}, we get
			$$(g(0,e_d)x_d-r_0^{1+\beta})_+\le u_{x_0,\delta}(x)\le (g(0,e_d)x_d+r_0^{1+\beta})_+\quad\text{for every}\quad x\in B_1,$$ up to choose $r_0$ small enough.
			Now we take a quadratic polynomial $p_0(x)$ in the class $\mathcal{P}(e_d,\F_{x_0,\delta},f_{x_0,\delta},g_{x_0,\delta})$, from \eqref{def:polynomial} (the existence of $p_0$ can be easily proved as in \Cref{section-contradiction}). In particular, using again \ref{hyp:h1}, we obtain			\be\label{est:p}\|p_0\|_{L^\infty(B_1)}\le C(\|f_{x_0,\delta}\|_{L^\infty(B_1)}+\|\nabla_x g_{x_0,\delta}\|_{L^\infty(B_1\times \mathbb{S}^{d-1})})\le Cr_0^{2}.\ee Therefore
			\bea\label{eq:cond}(g(0,e_d)x_d+p_0(x)- r_0^{1+\alpha})_+ \le u_{x_0,\delta}(x)\le (g(0,e_d)x_d+p_0(x)+r_0^{1+\alpha})_+\quad\text{for every}\quad x\in B_1, \eea up to choose $r_0$ small enough.
			
			We claim that the hypotheses of \cref{t:improvement of flatness} are satisfied by the rescaled function $u_n(x)=\frac{u(x_0+\rho_nx)}{\rho_n}$, where $\rho_n:=\delta\rho^n$ and $\rho>0$ is as in \cref{t:improvement of flatness}.
			If the claim is true, then by \cref{t:improvement of flatness} we know that there are unit vectors $\nu_n\in\mathbb{S}^{d-1}$ and quadratic polynomials $p_n$ such that $$(x\cdot\nu_n+ p_n(x)-r_n^{1+\alpha})_+\le u_{n}(x)\le (x\cdot\nu_n+ p_n(x)+r_n^{1+\alpha})_+\quad\text{for every}\quad x\in B_1,$$ where $r_n:=r_0\rho^n$. 
			
			Now we prove the claim. First observe that the functions $u_n$ are solutions to \eqref{def:def-viscosity-solution} with operator $\F_n:=\F_{x_0,\rho_n}$, right-hand side $f_n:=f_{x_0,\rho_n}$, free boundary condition $g_n:=g_{x_0,\rho_n}$ as in \eqref{def:rescaling-operator}, which satisfy the hypotheses \ref{eq:vicinanza-F1}, \ref{eq:vicinanza-f}, \ref{eq:vicinanza-g1}, by \cref{lemma:rate}.
			Moreover notice that if $p_n$ is the polynomial at step $n$ and $p_n'$ is the modified polynomial as in \cref{t:improvement of flatness}, then $p_{n+1}:=\rho p_n'$. In particular, by \eqref{est:p}, we have that
            \begingroup\allowdisplaybreaks\begin{align*}\|p_{n+1}\|_{L^\infty(B_1)}&=\rho\|p'_{n}\|_{L^\infty(B_1)}\le\rho\|p_{n}\|_{L^\infty(B_1)}+\overline C \rho r_{n}^{1+\alpha}=\rho^2\|p'_{n-1}\|_{L^\infty(B_1)}+\overline C \rho r_{n}^{1+\alpha}\\&\le \rho^2\|p_{n-1}\|_{L^\infty(B_1)}+\overline C\rho^2 r_{n-1}^{1+\alpha}+\overline C\rho r_{n}^{1+\alpha}
				\\&\le\ldots\le \rho^{n+1}\|p_0\|_{L^\infty(B_1)}+\overline C\sum_{j=0}^{n}\rho^{n+1-j}r_j^{1+\alpha}
				\\&=\rho^{n+1}\|p_0\|_{L^\infty(B_1)}+\overline Cr_0^{1+\alpha}\sum_{j=0}^{n}\rho^{n+1-j}(\rho^j)^{1+\alpha}
				\\&=\rho^{n+1}\|p_0\|_{L^\infty(B_1)}+\overline Cr_0^{1+\alpha}\rho^{n+1}\sum_{j=0}^n \rho^{j\alpha}
				\\&\le r_0\rho^{n+1}=r_{n+1},
			\end{align*}\endgroup up to choose $r_0$ small enough.
			Since $$p_n\in\mathcal{P}(\nu_n,\F_n,f_n,g_n),$$ the claim is proved.

			Now observe that, by \eqref{eq:stime}, we have $$|\nu_{n+1}-\nu_{n}|\le \overline Cr_n^{1+\alpha}$$ and \begin{align*}\left\|\frac{p_{n+1}}{r_{n+1}}-\frac{p_{n}}{r_{n}}\right\|_{L^\infty(B_1)}&=\frac{1}{r_0\rho^n}\left\|\frac{p_{n+1}}{\rho}-p_{n}\right\|_{L^\infty(B_1)}=\frac{1}{r_0\rho^n}\left\|p'_{n}-p_{n}\right\|_{L^\infty(B_1)}
				\le \overline C\frac{r_n^{1+\alpha}}{r_0\rho^n}=\overline Cr_n^\alpha\end{align*} 
			It follows that $\nu_n\to \nu_{x_0}$ and $\frac{p_n}{r_n}\to  \widetilde p_{x_0}$ for some unit vector $\nu_{x_0}\in\mathbb{S}^{d-1}$ and for some quadratic polynomial $ \widetilde p_{x_0}$.
			The flatness can be rewrite as $$\|u_n-g(0,\nu_{x_0})(x\cdot\nu_{x_0})-r_n \widetilde p_{x_0}\|_{L^\infty(B_1\cap \Omega_u)}\le Cr_n^{1+\alpha},$$ which implies the thesis after an interpolation with the radii $\rho_n$.
		\end{proof}
		\begin{proof}[Proof of \cref{t:flatness-implies}] Once the $C^{2,\alpha}$ rate of convergence was proved in \cref{corollary:rate-of-convergence}, the proof of the $C^{2,\alpha}$ regularity of the free boundary $\partial\Omega_u\cap B_1$ follows by standard arguments (see e.g. \cite{Velichkov:RegularityOnePhaseFreeBd}).
		\end{proof}

        \section{Higher regularity}\label{section8}
		In this section we prove the higher regularity of the free boundary $\partial\Omega_u\cap B_1$ in \cref{corollary:cinfty}. Once the $C^{2,\alpha}$ regularity of $u$ was established, the higher regularity result in \cref{corollary:cinfty} follows applying a hodograph map as done in \cite[Theorem 2]{KinderlehrerNirenberg1977:AnalyticFreeBd}.
	\begin{proof}[Proof of \cref{corollary:cinfty}]
         By \cref{t:flatness-implies}, we know that $u\in C^{2,\alpha}(\overline \Omega_u\cap B_{1/2})$. Given $\F\in C^{k,\beta}(\mathcal{S}^{d\times d}\times \R^d\times B_1)$, $f\in C^{k,\beta}(B_1)$, $g\in C^{k+1,\beta}(B_1,\mathbb{S}^{d-1})$ (resp. analytic), by a covering argument, it is sufficient to prove that, for every $x_0\in\partial\Omega_u\cap B_{1/2}$, there exists a radius $\delta>0$ such that the free boundary $\partial\Omega_u\cap B_1$ is of class $C^{k+2,\beta}$ (resp. analytic) in $B_\delta(x_0)$.
            Without loss of generality, we can suppose that $x_0=0$ and $\nabla u(0)=g(0,e_d)e_d$. Then, we can choose $\delta>0$ small enough such that the hodograph map $$\Phi:\overline\Omega_u\cap B_\delta\to \R^d\cap\{y_d\ge0\},\quad \Phi(x',x_d):=(x',u(x',x_d)),\quad $$ is bijective from $\overline\Omega_u\cap B_\delta$ onto $\{y_d\ge0\}\cap U$, where $U$ is a neighborhood $0$, and maps the free boundary $\partial\Omega_u\cap B_1$ into $\{y_d=0\}\cap U $.
            Then the partial Legendre transformation 
            $$\Phi^{-1}:\{y_d\ge0\}\cap U\to \overline\Omega_u\cap B_\delta,\quad \Phi^{-1}(y',y_d):=(y',w(y',y_d)),\quad $$ is well-defined
			and the free boundary $\partial\Omega_u\cap B_\delta$ is the graph of $$\{y_d\ge0\}\cap U\ni y'\mapsto\Phi^{-1}(y',0)= (y',w(y',0))$$ Then, in order to prove a regularity result for the free boundary $\partial\Omega_u\cap B_\delta$, it is sufficient to prove the same regularity result for the function $w$.

For every $x\in \Omega_u\cap B_\delta$, we can derive the identity $w(x',u(x))=x_d$ to get
\bea\label{der1}\partial_{y_d}w(x',u(x))\partial_{x_d}u(x)=1\eea and \bea\label{der2}\partial_{y_i}w(x',u(x))+\partial_{y_d}w(x',u(x))\partial_{x_i}u(x)=0\quad\text{if }i<d.\eea 
Deriving again, for every $x\in \Omega_u\cap B_\delta$, we have
\bea\label{der3}\partial_{y_dy_d}w\,(\partial_{x_d}u)^2+\partial_{x_dx_d}u\,\partial_{y_d}w=0,\eea \bea\label{der4}\partial_{y_dy_i}w\,\partial_{x_d}u+\partial_{y_dy_d}w\,\partial_{x_i}u\,\partial_{x_d}u+\partial_{y_d}w\,\partial_{x_ix_d}u=0\quad\text{if }i<d\eea and \bea\label{der5}\partial_{y_iy_j}w+\partial_{y_dy_i}w\,\partial_{y_j}u+\partial_{y_jy_d}w\,\partial_{x_i}u+\partial_{y_dy_d}w\,\partial_{x_i}u\,\partial_{x_j}u+\partial_{x_ix_j}u\,\partial_{y_d}w=0\quad\text{if }i,j<d,\eea where the function $w$ is computed in $(y',y_d)=(x',u(x))=\Phi(x)$.
Using the above identities, we get
$$\begin{cases}
\partial_{x_i}u=-\frac{\partial_{y_i}w}{\partial_{y_d}w}\quad&\text{if }i<d,\\
\partial_{x_d}u=\frac{1}{\partial_{y_d}w},\\
\partial_{x_dx_d}u=-\frac{\partial_{y_dy_d}w}{(\partial_{y_d}w)^3},\\
\partial_{x_dx_i}u=-\frac{\partial_{y_dy_i}w}{(\partial_{y_d}w)^2}+\frac{\partial_{y_i}w\partial_{y_dy_d}w}{(\partial_{y_d}w)^3}\quad&\text{if }i<d,\\
\partial_{x_ix_j}u=-\frac{\partial_{y_iy_j}w}{\partial_{y_d}w}+\frac{\partial_{y_i}w\partial_{y_dy_j}w}{(\partial_{y_d}w)^2}+\frac{\partial_{y_j}w\partial_{y_dy_i}w}{(\partial_{y_d}w)^2}-\frac{\partial_{y_i}w\partial_{y_j}w\partial_{y_dy_d}w}{(\partial_{y_d}w)^3}\quad&\text{if }i,j<d.
\end{cases}$$ 
This means that the first and second derivatives of $u$ in $x$ can be explicitly expressed in terms of first and second derivatives of $w$ in $y$.
In particular, if $x=(y',w(y))$, there are explicit functions $\Psi_1:\mathcal{S}^{d\times d}\times \R^d\to\mathcal{S}^{d\times d}$ and $\Psi_2:\R^d \to \R^d$ such that $$D^2u(x)=\Psi_1(D^2w(y),\nabla w(y))\quad \text{and}\quad \nabla u(x)=\Psi_2(\nabla w(y)).$$ 
We define
$$\widetilde \F(M,\xi,z,y):=\F(\Psi_1(M,\xi),\Psi_2(\xi),(y',z))-f(y',z),$$ then $$\widetilde \F(D^2w,\nabla w,w,y)=\F(D^2u,\nabla u,x)-f(x)=0 \quad\text{in }\{y_d>0\}\cap U,$$
    Moreover, as can be easily seen, the operator $\widetilde \F$ is also elliptic with respect to $w$ (see \cite[Lemma 3.1]{KinderlehrerNirenberg1977:AnalyticFreeBd}).
    
            For what concerns the boundary condition, we define
            $$G(x,\xi):=|\xi|-g\left(x,\frac{\xi}{|\xi|}\right).$$ Since $\nabla u(0)=g(0,e_d)e_d$, an explicit computation shows that \be\label{cond-fond}\partial_{\xi_d}G(0,\nabla u(0))=1.\ee We also set
            $$\widetilde G(y',z,\xi):=G((y',z),\Psi_2(\xi))=G\left((y',z),\left(-\frac{\xi_1}{\xi_d},\ldots,-\frac{\xi_{d-1}}{\xi_{d}},\frac{1}{\xi_d}\right)\right)$$
            then we observe that $$\widetilde G(y',w,\nabla w)=G(x,\nabla u(x))=0\quad\text{on } \{y_d=0\}\cap U.$$ 
            Moreover, by \eqref{cond-fond} and since $\nabla u(0)=g(0,e_d)e_d$, we have that \be\label{cond-fond1}\begin{aligned}\partial_{\xi_d}\widetilde G(y,w(y),\nabla w(y))&=\nabla_{\xi} G(x,\nabla u(x))\cdot\left(\frac{\partial_{y_1}w}{(\partial_{y_d}w)^2},\ldots,\frac{\partial_{y_{d-1}}w}{(\partial_{y_d}w)^2},-\frac{1}{(\partial_{y_d}w)^2}\right)\not=0
            \end{aligned}
            \ee
for every $y\in \{y_d=0\}\cap U$, for $\delta$ small enough.

           In order to use the regularity results in \cite{adn,morrey-multiple-integrals} for uniform elliptic problem, we need to check that the elliptic system \begin{equation}\label{eq:problem-compl}\begin{cases}
			\widetilde \F(D^2w,\nabla w,w,y)=0 \quad&\text{in } \{y_d>0\}\cap U,\\
			\widetilde G(y',w,\nabla w)=0 \quad&\text{on } \{y_d=0\}\cap U,\\
		\end{cases}
	\end{equation}
    satisfies the \textit{complementing condition} on the boundary $\{y_d=0\}$ in the sense of \cite[Section 7]{adn}. It is well known (see e.g. \cite{kns}) that this condition is equivalent to show the following fact. For every $\xi'\in\R^{d-1}\setminus\{0\}$ and $y_0\in \{y_d=0\}\cap U$, there are no nontrivial solutions to the elliptic system  \begin{equation}\label{ellipticproblem}\begin{cases}
			\LL v=0 \quad&\text{in } \{y_d>0\},\\
			\mathcal{B}v=0 \quad&\text{on } \{y_d=0\},\\
		\end{cases}
	\end{equation}
    where $$\LL v:=\sum_{j,k=1}^d\frac{\partial \widetilde \F}{\partial{M_{jk}}}(D^2w(y_0),\nabla w(y_0),w(y_0),y_0)\partial_{y_iy_j}v=\sum_{j,k=1}^d a_{ij}\partial_{y_jy_k}v$$ and $$\mathcal{B}v:=\sum_{j=1}^d\frac{\partial \widetilde G}{\partial{\xi_{j}}}(y_0,w(y_0),\nabla w(y_0))\partial_{y_j}v=\sum_{j=1}^db_j\partial_{y_j}v$$
            of the type \bea v(y',y_d):=e^{i \xi'\cdot y'}v_0(y_d),\quad v_0:\R\to\R,\eea which decays at $0$ as $y_d\to+\infty$.
        Given $v$ as above, the interior condition of \eqref{ellipticproblem} gives
$$a_{dd}v''_0(y_d)+i\sum_{j=1}^{d-1}(a_{jd}+a_{dj})\xi_j'v_0'(y_d)-\sum_{j,k=1}^{d-1}a_{jk}\xi_j'\xi_k'v_0(y_d)=0,$$ then $$av''_0(y_d)+ibv'_0(y_d)+cv_0(y_d)=0,$$ for some $a,b,c\in\R$.
Let $\mu_{1,2}:=\frac{-ib\pm\sqrt{-b^2-4ac}}{2a}$, then we have two cases.
\begin{itemize}
    \item If $\mu_1\not=\mu_2$, then $$v_0(y_d)=C_1e^{\mu_1y_d}+C_2e^{\mu_2y_d}.$$ Since $v_0$ vanishes as $y_d\to+\infty$, if $C_j\not=0$ for $j=1,2$, then $\text{Re}(\mu_j)\le 0$. For the same reason, it cannot be happen that $\text{Re}(\mu_1)=\text{Re}(\mu_2)=0$.
    The free boundary condition of \eqref{ellipticproblem} gives $$b_d(C_1\mu_1+C_2\mu_2)+i(C_1+C_2)\sum_{j=1}^{d-1}b_j\xi'_j=0,$$ then $$C_1\text{Re}(\mu_1)+C_2\text{Re}(\mu_2)=0,$$ since $b_d\not=0$, by \eqref{cond-fond1}. By definition of $\mu_j$, we know that $\text{Re}(\mu_1)=-\text{Re}(\mu_2)$, then either $C_1=C_2$ or $\text{Re}(\mu_1)=0$. The second possibility is impossible, then $C_1=C_2=0$.
\item If $\mu_1=\mu_2=-\frac{ib}{2a}$, then $$v_0(y_d)=C_1e^{\mu_1y_d}+C_2y_de^{\mu_1y_d}.$$ Then $C_1=C_2=0$, since $v_0$ vanishes as $y_d\to+\infty$.
\end{itemize}
This proves the claim, i.e.~the elliptic problem \eqref{eq:problem-compl} satisfies the complementing condition.

As already observed, $u\in C^{2,\alpha}(\overline \Omega_u\cap B_\delta)$, then $w\in C^{2,\alpha}(\{y_d\ge0\}\cap U)$.
Moreover, since $\F\in C^{k,\beta}$, $f\in C^{k,\beta}$ and $g\in C^{k+1,\beta}$, we deduce that $\widetilde \F\in C^{k,\beta}$ and $\widetilde G\in C^{k+1,\beta}$.
            Then we can apply \cite[Theorem 11.1]{adn} to get that $w$, and thus $\partial \Omega_u\cap B_\delta$, is of class $C^{k+2,\beta}$.
			Similarly, in the case when $\F,$ $f,$ $g,$ are  analytic, we can apply the results in \cite[Section 6.7]{morrey-multiple-integrals}
            to get that $w$, and thus $\partial \Omega_u\cap B_\delta$, is analytic.
		\end{proof}
		\printbibliography
	\end{document}